\newtheorem{prop}{Proposition}
\newtheorem{lemma}{Lemma}
\newtheorem{corollary}{Corollary}
\newtheorem{theorem}{Theorem}
\newtheorem{remark}{Remark}
\newcommand{\E}{\mathbb{E}}
\def\tti{{\tt i}}
\def\sfP{{\sf P}}
\def\sfR{{\sf R}}
\def\sfk{{\sf k}}
\def\sfq{{\sf q}}
\def\blambda{{\boldsymbol{\lambda}}}
\newcommand{\ba}{\begin{array}}
\newcommand{\ea}{\end{array}}
\newcommand{\be}{\begin{equation}}
\newcommand{\ee}{\end{equation}}
\newcommand{\bea}{\begin{eqnarray}}
\newcommand{\eea}{\end{eqnarray}}
\newcommand{\beaa}{\begin{eqnarray*}}
\newcommand{\eeaa}{\end{eqnarray*}}
\font\tenmath=msbm10 \font\sevenmath=msbm7 \font\fivemath=msbm5
\def \={{\buildrel {\rm (law)} \over =}}
\def\D{\Delta}
\def\cB{{\cal B}}
\def\qed{ \hfill \vrule width.25cm height.25cm depth0cm\smallskip}
\newcommand{\basa}{\begin{assumption}}
\newcommand{\easa}{\end{assumption}}
\newcommand{\bas}{\begin{assum}}
\newcommand{\eas}{\end{assum}}
\def\limsup{\mathop{\overline{\rm lim}}}
\def\wt{\widetilde}
\def\wt{\widetilde}
\def\bE{{\bf E}}
\def\bea{\begin{eqnarray}}
\def\eea{\end{eqnarray}}
\def\beas{\begin{eqnarray*}}
\def\eeas{\end{eqnarray*}}
\def\bd{\begin{description}}
\def\ed{\end{description}}
\def\im{\item}
\newif\ifcol
\newcommand{\cred}{\color[rgb]{0.8,0,0}}
\newcommand{\colorg}{\color[rgb]{0,0.5,0}}
\newcommand{\cred}{\color{black}}
\newcommand{\colorg}{\color{black}}
\def\bd{\begin{description}}
\def\ed{\end{description}}
\def\D2{\bbD_{2,\infty-}}
\def\ba{\bar{A}}
\def\D{{\bf D}}
\def\E{{\bf E}}
\def\cale{{\cal E}}
\def\calh{{\cal H}}
\def\calp{{\cal P}}
\def\cals{{\cal S}}
\def\ds{\displaystyle}
\def\yeq{\>=\>}
\def\yleq{\>\leq\>}
\def\sfk{{\sf k}}
\def\simleq{\ \raisebox{-.7ex}{$\stackrel{{\textstyle <}}{\sim}$}\ }
\def\ep{\epsilon}
\def\half{\frac{1}{2}}
\def\halflineskip{\vspace*{3mm}}
\def\nn{\nonumber}
\def\be{\begin{equation}}
\def\ee{\end{equation}}
\def\bea{\begin{eqnarray}}
\def\eea{\end{eqnarray}}
\def\beas{\begin{eqnarray*}}
\def\eeas{\end{eqnarray*}}
\def\bi{\begin{itemize}}
\def\ei{\end{itemize}}
\def\im{\item}
\def\bd{\begin{description}}
\def\ed{\end{description}}
\def\r{\right}
\newcommand{\bbD}{{\mathbb D}}
\newcommand{\bbI}{{\mathbb I}}
\newcommand{\bbJ}{{\mathbb J}}
\newcommand{\bbN}{{\mathbb N}}
\newcommand{\bbR}{{\mathbb R}}
\newcommand{\bbS}{{\mathbb S}}
\newcommand{\bbZ}{{\mathbb Z}}
\newcommand{\ignore}[1]{}
\begin{document}

\renewcommand{\thefootnote}{\fnsymbol{footnote}}

\renewcommand{\thefootnote}{\fnsymbol{footnote}}

\title{High order asymptotic expansion for Wiener functionals
\footnote{
This work was in part supported by 
Japan Science and Technology Agency CREST JPMJCR14D7; 
Japan Society for the Promotion of Science Grants-in-Aid for Scientific Research 
No. 17H01702 (Scientific Research);  
and by a Cooperative Research Program of the Institute of Statistical Mathematics. 
}
}
\author[1,2]{Ciprian A. Tudor}
\author[3,4,5]{Nakahiro Yoshida}
\affil[1]{Universit\'e de Lille 1
\footnote{Universit\'e de Lille 1: 59655 Villeneuve d'Ascq, France}
        }
\affil[3]{Graduate School of Mathematical Sciences, University of Tokyo
\footnote{Graduate School of Mathematical Sciences, University of Tokyo: 3-8-1 Komaba, Meguro-ku, Tokyo 153-8914, Japan. e-mail: nakahiro@ms.u-tokyo.ac.jp}
        }
\affil[4]{CREST, Japan Science and Technology Agency}
\affil[5]{The Institute of Statistical Mathematics
        }
\maketitle

\begin{abstract}
{\colorg By combining the Malliavin calculus with Fourier techniques, we develop a high-order asymptotic expansion theory for a sequence of vector-valued random variables. Our asymptotic expansion formulas give the development of the characteristic functional and of the  local density of  the random vectors up to an arbitrary order. We analyzed in details an example related to  the wave equation  with space-time white noise which also provides  interesting facts on the correlation structure of the solution to this equation.  }
\end{abstract}

\vskip0.3cm

{\bf 2010 AMS Classification Numbers: } 62M09, 60F05, 62H12

\vskip0.3cm

{\bf Key Words and Phrases}: Asymptotic expansion,  Stein-Malliavin calculus,   Central limit theorem,  cumulants, wave equation.


\section{Introduction}
{
The asymptotic expansion of probability distributions for random variables and vectors represents a fundamental topic in probability theory and mathematical statistics. This theory has been widely applied to several fields, including the { efficiency} of estimators, hypothesis testing, 
{ information criterion for} model selection, 
{ prediction theory, bootstrap methods and resampling plans, and information geometry}. There exists now huge literature on asymptotic expansion. We refer, among many others, to {\cite{BR} and \cite{bhattacharya1990asymptotic}} for the case of sequences of i.i.d. random variables 
{ and applications}, to \cite{GH} for sequences of weakly dependent  variables, to \cite{My}, \cite{Yos2}, \cite{Yos3}, 
\cite{KusuokaYoshida2000}, \cite{Yoshida2001b}, \cite{SakamotoYoshida2004}, 
\cite{Yos1}, \cite{PY} and 
\cite{podolskij2017edgeworth} for asymptotic expansion of martingales and of classical diffusions, and to \cite{NY} or \cite{TY} for general sequences of random variables. The reader may consult the monographs \cite{BR}, \cite{Hall} and \cite{La} for complete expositions on these topics. 

Generally speaking, the asymptotic expansion theory aims at finding the expansion of the density functions for a sequence of random variables that converges in law to a target distributions (usually, the Gaussian distribution, but other target distributions, such as mixed normal, are possible). The theory usually  provides the leading term and  second order term  in the asymptotic development of the density function.  For estimators and test statistics, while the leading term is used  for confidence limits and testing, the {\colorg higher order terms provide} a more accurate inference. Some higher order-type asymptotic expansions for particular sequences of diffusion-type can be found in 
{
\cite{Yoshida1992}, \cite{Yoshida1993} with applications to statistics, and in
\cite{Yoshida1992a}, \cite{KunitomoTakahashi2001}, 
\cite{UchidaYoshida2004b}, 
\cite{Ta}, \cite{TaTa}, \cite{TaTaTo} 
}with applications to finance.

Our purpose is to provide a general method to obtain the asymptotic expansions of density functions up to an arbitrary order, i.e. to find the further terms that appear in the asymptotic behavior of the density. Our approach combines the so-called Fourier approach and the recent Stein-Malliavin theory (see \cite{NPbook}) and applies to general sequences of random vectors. Our main finding is that the asymptotic expansion up to any order of the family of densities  of a vector-valued random sequence $(F_{N})_{N\geq 1}$  is completely characterized by the expectation of the so-called   {\it Gamma factors} associated to the sequence $F_{N}$, or equivalently, by the joint cumulants of the components of this sequence. These Gamma factors (defined in Section \ref{sec21}) are defined in terms of the Malliavin operators of $F_{N}$. Consequently, the knowledge of the Taylor expansions of the cumulants, together with some regularity in the Malliavin sense of $F_{N}$,  gives the higher order asymptotic expansion of the  density. We also mention that, in contrast with the classical assumptions in the martingale case (see e.g. \cite{Yos3}, \cite{Yos1}) or in the Malliavin calculus case (see \cite{TY}), the joint convergence in distribution of $F_{N}$ together with its "bracket" (which is the usual martingale bracket when we deal with sequences of martingales and it is defined in terms of the Malliavin derivative in the non-martingale case) is not assumed in our work.  

As mentioned above, our strategy is based on the Stein-Malliavin calculus combined with the so-called Fourier {\colorg approach}. We start by analyzing the behavior of the (truncated) characteristic function of the sequence $(F_{N}) _{N\geq 1}$ via an interpolation method and the Malliavin-type integration by parts. We notice the appearance of the Gamma factors in the principal part of asymptotic expansion of the characteristic function. The Fourier inversion, together with some regularity of the distribution expressed in term of the Malliavin calculus, allows to develop asymptotically the sequence of 
{(local)} 
densities of $ F_{N}$.   Some regular ordering of the cumulants is assumed and this is checked in examples. Usually, the second order term in the asymptotic expansion 
{comes} 
from leading term in the expansion of third cumulant, while the third order term is due to the second and the fourth cumulant. A general formula is obtained.

As an example, we analyze the behavior of the spatial quadratic variation for the solution to the wave equation driven by a space-time white noise. We treat both the one dimensional case (i.e we  fix the time $t$ and we study the quadratic variation in space for the solution), as well as a two-dimensional case (i.e. we consider {\colorg a } the two-dimensional random sequence whose components are the spatial quadratic variations of the solution at two different times). In both cases,  based on a sharp analysis of the correlation structure of the solution, we are able to find the asymptotic expansion up to at least the third order term.  {\colorg Let us emphasize that, besides being a toy example to apply our asymptotic expansion theory,  this last part of our work   shows some interesting facts related to the solution to the wave equation driven by  a space-time white noise. We obtain the precise correlation of the increments of the solution, at fixed time and when the time is moving and it appears that the dependence structure of these increments depends in a non-trivial way on the spatial and temporal lags.  }

We organized our paper as follows. {\colorg Section 2 presents, after the definition and some basic properties of the Gamma-factors, the asymptotic expansion up to an arbitrary order of the (truncated) characteristic function of a sequence of vector-valued random variables $(F_{N}) _{N\geq 1}$.} This expansion depends on the Gamma-factors (or equivalently, the cumulants) of the vector $F_{N}$. Based on this expansion, we obtain in Section 3, by inverting in Fourier sense the principal part of the characteristic function, the  approximate density for our sequence. This will approximate the local (truncated) density of $F_{N}$. The asymptotic expansion is further explicited in Section 4 where we show that if the cumulants of $F_{N}$ admit a specific Taylor expansion, then a more precise expansion of the local density can be derived. In Section 5 we treat in details a concrete example related to the solution to the wave equation driven by an additive space-time white noise. The last {\colorg section} is the Appendix which contains the basic tools of the Malliavin calculus. 
}

\section{Expansion of the characteristic functional}
{In this section,  we analyze  the asymptotic behavior of a general {\colorg sequence} of random vectors, by using an interpolation method and the Malliavin integration by parts.  We will distinguish a principal part of the characteristic function, written in terms of the Gamma factors and a neglijible part. These two parts are then estimated separately.  }

\subsection{The Gamma factors}\label{sec21}
{\colorg  Let $(W(h), h\in H)$ be an isonormal process on a standard  probability space $(\Omega, \mathcal{F}, P)$. }For the definition of the Malliavin operators {\colorg with respect to $W$, } see Section \ref{app}.
{
The pseudo-inverse $L^{-1}$ of $L$ is defined by 
$L^{-1}F=\sum_{q=1}^\infty q^{-1}J_qF$ for $F=\sum_{q=0}^\infty J_qF\in L^2(\Omega)$, 
where $J_q$ is the orthogonal projection to the $q$-th chaos.
For $p\in\bbN$ and $F\in\bbD_{1,p}$,} 
we define the  {\it Gamma- factors } $\Gamma ^{(p)}(F) $ is a reccursive way, see e.g. \cite{NPbook}.
\begin{eqnarray*}
&& \Gamma^{(1)}(F)= F\\
&& \Gamma^{(2)} (F)= \langle DF, D(-L) ^ {-1} F \rangle _{H},\\
&&....\\
&&\Gamma ^{(p)} (F)= \langle DF, D(-L)^ {-1} \Gamma^{(p-1)} (F) \rangle _{H}.
\end{eqnarray*}
These variables are well defined by Lemma \ref{20181102-1} below, {\colorg if $F$ is regular enough in the sense of the Malliavin calculus. }

We have the following formula that links the Gamma-factors and the cumulants: for every $m\geq 1$
\begin{equation}
\label{kg}
k_{m} (F)= (m-1) ! \mathbf{E}\left[ \Gamma ^{(m)} (F)\right].
\end{equation}
Recall that the $m$th cumulant of a  random variable $F\in L^ {m}(\Omega)$ is given by 
\begin{equation*}
k_{m}(F)= (-i) ^ {m} \frac{\partial}{\partial t ^ {m}}\ln\mathbf{E}\left[ e ^ {itF}\right]\big| _{t=0}.
\end{equation*}

We also introduce {\it the multidimensional Gamma factors} of a random vector $F=(F^{(1)},..,F^{(d)}){ \in\bbD_{1,p}(\bbR^d)}$  are defined in the following way. For $i=1,..,d$,
$$ \Gamma ^{(1)}_i (F)= F  ^{(i)} $$
and for $i_{1}, i_{2} =1,.., d$,
$$\Gamma ^{(2)} _{i_{!}, i_{2} } (F) =\langle DF ^{(i_{2})}, D(-L) ^{-1} F^{(i_{1})}\rangle_H$$
while for $i_{1},.., i_{p}=1,..,d$
\begin{equation}
\label{gf}
\Gamma ^{(p)} _{i_{1},.., i_{p}}(F) = \langle DF^{(i_{p})}, D(-L) ^{-1} \Gamma ^{(p-1) }_{i_{1},.., i_{p-1}}(F)\rangle_H .
\end{equation}

The muldimensional Gamma factors $\Gamma ^{(p)} _{i_{1},.., i_{p}}(F) $ are also related to the joint cumulants of the random vector $F$.  Recall that if  $m =(m_{1},.., m_{d}) \in \mathbb{N} ^{d} $, then the $m$th cumulant of the random vector  $F=(F^{(1)},..,F^{(d)}) $ is 

$$k_{m}(F) = k_{(m_{1},.., m_{d}) } (F^{(1)},.., F^{(d)}) =(-{\tt i} ) ^{\vert m\vert } \frac{\partial ^{\vert m\vert }}{\partial t ^{m}} \log \mathbf{E}\left[ e ^{{\tt i} \langle t,F\rangle}\right] | _{t=0}$$
where $\vert m\vert = m_{1}+...+m_{d}$.  See \cite{NPbook} for the precise link between the multidimensional Gamma factors and the cumulants.

{
Let $\bbZ_+=\{0,1,2,...\}$. 
\begin{lemma}\label{20181102-1}
\bd
\im[(a)] 
Let $\ell\in\bbZ_+$ 
and $r>1$. Then 
$D(-L)^{-1}F\in\bbD_{\ell+1,r}({\colorg H})$ if $F\in\bbD_{\ell,r}$, and there exists a constant $C_{\ell,r}$ such that 
\beas 
\big\|D(-L)^{-1}F\big\|_{\ell+1,r}
&\leq& 
C_{\ell,r}\big\|F\big\|_{\ell,r}
\eeas
for all $F\in\bbD_{\ell,r}$.
\im[(b)] Let $\ell\in\bbZ_+$, $r>1$ and $p\in\{2,3,...\}$. Then 
$\Gamma^{(p)}_{i_1,...,i_p}(F)\in\bbD_{\ell,r}$ 
if $F=(F^{(1)},...,F^{(d)})\in\bbD_{\ell+1,pr}(\bbR^d)$, and there exists a constant $C_{\ell,r,p}$ such that 
\bea\label{201812291317} 
\big\|\Gamma^{(p)}_{i_1,...,i_p}(F)\big\|_{\ell,r}
&\leq& 
C_{\ell,r,p}\big\|F\big\|_{\ell+1,pr}^p
\eea
for all $F\in\bbD_{\ell+1,pr}$ and $i_1,...,i_p\in\{1,...d\}$. 
In particular, 
$\Gamma^{(p)}_{i_1,...,i_p}(F)\in\bbD_{\ell,\infty-}$ if $F\in\bbD_{\ell+1,\infty-}=\cap_{r>1}\bbD_{\ell+1,r}$. 
\ed
\end{lemma}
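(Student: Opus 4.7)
The plan is to settle part (a) as a direct consequence of Meyer's inequality on the Malliavin--Sobolev scale, and then derive part (b) by induction on $p$, feeding (a) into the recursive definition of $\Gamma^{(p)}_{i_1,\ldots,i_p}(F)$.

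For part (a), work with the Wiener chaos decomposition $F=\sum_{q\geq 0}J_qF$. The pseudo-inverse $(-L)^{-1}$ acts on each chaotic component $J_qF$ ($q\geq 1$) as multiplication by $q^{-1}$ and annihilates $J_0F$, so it commutes with every projection $J_q$. Using the Meyer equivalence $\|\cdot\|_{\ell,r}\sim\|(I-L)^{\ell/2}\cdot\|_{L^r}$, the operator $(-L)^{-1}$ is bounded from $\bbD_{\ell,r}$ into $\bbD_{\ell+2,r}$ -- it gains two degrees of regularity -- so $D(-L)^{-1}$ is bounded from $\bbD_{\ell,r}$ into $\bbD_{\ell+1,r}(H)$, with operator norm controlled by a constant $C_{\ell,r}$ coming from Meyer's inequalities. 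Equivalently, one may invoke the identity $F-\mathbf{E}[F]=-\delta DL^{-1}(F-\mathbf{E}[F])$ together with the Meyer-type bound $\|u\|_{\ell+1,r}(H)\leq C\|\delta u\|_{\ell,r}$ to reach the same conclusion.

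For part (b), argue by induction on $p\geq 2$. In the base case,
$$
\Gamma^{(2)}_{i_1,i_2}(F)=\langle DF^{(i_2)},D(-L)^{-1}F^{(i_1)}\rangle_H,
$$
and a Leibniz-rule product estimate for the $H$-inner product in the Malliavin--Sobolev scale, combined with H\"older's inequality at conjugate exponents $(2r,2r)$, yields
$$
\|\Gamma^{(2)}_{i_1,i_2}(F)\|_{\ell,r}\leq C\|DF^{(i_2)}\|_{\ell,2r}\|D(-L)^{-1}F^{(i_1)}\|_{\ell,2r}\leq C\|F\|_{\ell+1,2r}^{2},
$$
the last step invoking part (a) on the second factor. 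For the inductive step, assume (\ref{201812291317}) at level $p-1$ for every $r'>1$ and $\ell'\in\bbZ_+$, and write $\Gamma^{(p)}=\langle DF^{(i_p)},D(-L)^{-1}\Gamma^{(p-1)}\rangle_H$. Choose the balanced H\"older exponents $\alpha=pr$ and $\beta=pr/(p-1)$, so that $1/\alpha+1/\beta=1/r$ and $(p-1)\beta=pr$. The product estimate then gives $\|\Gamma^{(p)}\|_{\ell,r}\leq C\|DF^{(i_p)}\|_{\ell,\alpha}\|D(-L)^{-1}\Gamma^{(p-1)}\|_{\ell,\beta}$; part (a) absorbs $D(-L)^{-1}$ into a factor $C\|\Gamma^{(p-1)}\|_{\ell,\beta}$, and the induction hypothesis at exponent $\beta$ bounds this by $C\|F\|_{\ell+1,(p-1)\beta}^{p-1}=C\|F\|_{\ell+1,pr}^{p-1}$. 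Combining with $\|DF^{(i_p)}\|_{\ell,\alpha}\leq\|F\|_{\ell+1,pr}$ closes the induction and delivers the estimate with integrability exponent $pr$ and homogeneity $p$. The final assertion $\Gamma^{(p)}\in\bbD_{\ell,\infty-}$ follows by letting $r\uparrow\infty$.

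The main technical obstacle is the Leibniz-type product estimate for $H$-valued random elements in the Malliavin--Sobolev scale: iterating $D^k$ on $\langle G_1,G_2\rangle_H$ generates a sum of $H^{\otimes k}$-valued tensor contractions of $D^jG_1$ and $D^{k-j}G_2$ for $0\leq j\leq k$, each of which must be estimated by H\"older in a mixed $L^r$-norm. This is a standard computation but requires careful bookkeeping of tensor indices, and it is precisely what forces the integrability exponent to inflate by the factor $p$ across the $p-1$ iterations of the recursion.
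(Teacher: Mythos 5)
Your proof is correct and follows essentially the same route as the paper's: part (a) rests on Meyer's inequalities and the fact that $D(-L)^{-1}$ nets one derivative (the paper phrases this through the commutation $D(-L)^{-1}=(I-L)^{-1}D$ on polynomials, you through $(-L)^{-1}:\bbD_{\ell,r}\to\bbD_{\ell+2,r}$ followed by $D$), and part (b) is the same induction on $p$ with the H\"older split $\tfrac{1}{pr}+\tfrac{p-1}{pr}=\tfrac{1}{r}$ and part (a) absorbing $D(-L)^{-1}$. The only cosmetic difference is that the paper applies (a) in the form $\|D(-L)^{-1}G\|_{\ell,s}\lesssim\|G\|_{\ell-1,s}$ and invokes the induction hypothesis one regularity level down, whereas you stay at level $\ell$; both bookkeepings close the induction.
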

\proof 
\begin{en-text}
Let $r\in[1,\infty)$ and let $q=r/(r-1)$. 
For $F, G\in\cals$ (the set of smooth functionals), 
\beas
E\big[\langle DG,D(-L)^{-1}F\rangle_\calh\big]
&=&
E\big[G\>\delta D(-L)^{-1}F\big]\yeq E[GF]
\eeas
and in particular
\beas
\big|E\big[\langle DG,D(-L)^{-1}F\rangle_\calh\big]\big|
&\leq&
\|F\|_r\|G\|_{r'}
\eeas
Since $\{DG;\>G\in\cals\}$ is dense in $L^{r'}(\calh)$, we have 
Suppose that $F\in\bbD_{\ell,r}$ for some $\ell\geq0$. 
because for $\Phi\in\calp$, 
\end{en-text}
Let $r>1$. 
Since 
$
D(-L)^{-1}F= (I-L)^{-1}DF
$
for $F\in\calp$, the set of polynomial functionals, 
we have 
\beas 
\big\|D(-L)^{-1}F\big\|_{\ell+1,r}
&\simleq& 
\big\|(I-L)^{(\ell+1)/2}D(-L)^{-1}F\big\|_r
\yeq
\big\|(I-L)^{(\ell-1)/2}DF\big\|_r
\\&\simleq&
\big\|DF\big\|_{\ell-1,r}
\>\simleq\>
\big\|F\big\|_{\ell,r}
\eeas
for all $F\in\calp$ uniformly. 
Therefore, $D(-L)^{-1}$ is extended as a continuous linear operator 
from $\bbD_{\ell,r}$ to $\bbD_{\ell+1,r}(\calh)$. 
Thus we obtained (a). 

Suppose that $F=(F^{(1)},...,F^{(d)})\in\bbD_{\ell+1,pr}(\bbR^d)$. 
\begin{en-text}
Since $\Gamma^{(2)}_{i_1,i_2}(F)=\langle DF ^{(i_{2})}, D(-L) ^{-1} F^{(i_{1})}\rangle_H$, 
we have
\beas 
\big\|\Gamma^{(2)}_{i_1,i_2}(F)\big\|_{\ell,r}
&\simleq&
\big\|F^{(i_2)}\big\|_{\ell+1,2r}\big\|D(-L)^{-1}F^{(i_1)}\big\|_{\ell,2r}
\\&\simleq&
\big\|F^{(i_2)}\big\|_{\ell+1,2r}\big\|F^{(i_1)}\big\|_{\ell-1,2r}
\\&\simleq&
\big\|F\big\|_{\ell+1,2r}^2.
\eeas
\end{en-text}
The property (b) follows from (a) by induction. 
Indeed, 
since $\Gamma^{(k+1)}_{i_1,...,i_{k+1}}(F)
=\langle DF^{(i_{k+1})}, D(-L)^{-1} \Gamma^{(k)}_{(i_1,...,i_k)}\rangle_H$, 
if (\ref{201812291317}) holds for $k$ $(\leq p-1)$ in place of $p$, then 
\beas 
\big\|\Gamma^{(k+1)}_{i_1,...,i_{k+1}}(F)\big\|_{\ell,r}
&\simleq&
\big\|F^{(i_{k+1})}\big\|_{\ell+1,(k+1)r}\big\|D(-L)^{-1}\Gamma^{(k)}_{(i_1,...,i_k)}(F)
\big\|_{\ell,k^{-1}(k+1)r}
\\&\simleq&
\big\|F^{(i_{{\cred k+1}})}\big\|_{\ell+1,(k+1)r}\big\|\Gamma^{(k)}_{(i_1,...,i_k)}(F)\big\|_{\ell-1,k^{-1}(k+1)r}
\\&\simleq&
\big\|F^{(i_{{\cred k+1}})}\big\|_{\ell+1,(k+1)r}\big\|F\big\|_{\ell,(k+1)r}^k
\\&\simleq&
\big\|F\big\|_{\ell+1,(k+1)r}^{k+1}.
\eeas
{\cred Thus, (\ref{201812291317}) holds for $p=k+1$. 
The inequality (\ref{201812291317}) is trivial when $p=1$, which completes the proof. 
\qed
}

\subsection{Interpolation}
{\colorg Let $d\geq 1$. Consider a sequence of centered  random variables $(F_{N}) _{N\geq 1}$ in $\mathbb{R} ^{d}$ of the form
$$F_{N}= \left( F_{N} ^{(1)}, \ldots, F_{N} ^{(d)}\right).$$ Let $C=(C_{i,j})_{i,j=1}^d$ be a deterministic $d\times d$ positive definite symmetric matrix.  For every $N\geq 1$, we introduce {\cred a} 
truncation functional $\Psi_{N}$ which is a smooth random variable. A more precise form of this functional will be chosen later in Section 2.} We consider the following condition for the truncation functional $\Psi_N$: if $p$ is a positive integer
\bd
\im[[$\Psi$\!\!]] 
For each $N\in\bbN$, 
{$\Psi_N:\Omega\to[0,1]$ and 
$\Psi_N\in\bbD_{1,p+1}$.}
\ed

Let {\colorg  us define the interpolation functional }
\bea\label{en}
e(\theta, \boldsymbol{\lambda}, F_{N})
&=& 
\exp\bigg(
{\tt i} \theta \langle \boldsymbol{\lambda} , F_{N} \rangle 
- \half(1-\theta ^{2}) 
\boldsymbol{\lambda}^{T} C\boldsymbol{\lambda} \bigg)
\eea
for every $\theta \in [0,1]$ and  $\boldsymbol{\lambda}\in \mathbb{R} ^ {d} $. 
Let $ \boldsymbol{\lambda}= (\lambda _{1},..., \lambda _{d}) \in \mathbb{R } ^{d}$ and $\theta \in [0,1]$ and let us consider the truncated interpolation
\bea\label{interpol}
\varphi _N^{\Psi} (\theta, \boldsymbol{\lambda} ) 
&=& 
\mathbf{E} \big[ \Psi _{N} e(\theta, \boldsymbol{\lambda}, F_{N})\big].
\eea
Notice that $\varphi^\Psi_{N} (1, \boldsymbol{\lambda}) = \mathbf{E}\big[\Psi_{N} e ^{{ \tti}\langle \boldsymbol{\lambda}, F_{N} \rangle }\big]$ represents  the "truncated" characteristic function of $F_{N}$, 
while $\varphi^\Psi_{N} (0, \boldsymbol{\lambda}) =\mathbf{E}[\Psi_{N}]e ^{-\boldsymbol{\lambda} ^{T}C\boldsymbol{\lambda}/2}$ is  the "truncated"characteristic function of the limit in law of $F_{N}$. 

{\colorg
The first step is to get the expansion  of the derivative with respect to the variable $\theta$  of the characteristic functional.}  
{

}

\begin{lemma}\label{20181031-1}
{ Suppose that $F_N\in{\bbD_{1,p+1}}(\bbR^d)$  {\colorg for any $N\geq 1$ } and that $[\Psi]$ holds. 
Then the functional $\varphi^\Psi_N(\theta,\blambda)$ is well defined for 
$(\theta,\blambda)\in[0,1]\times{ \bbR^d}$, 
and it holds that 
}
\begin{eqnarray}\label{20190414-1}
 \frac{\partial}{\partial \theta} \varphi _N ^{\Psi} (\theta, \boldsymbol{\lambda} ) 
 &=& 
 {\tt i} ({\tt i} \theta)^{p}  \sum_{i_{1}, ..., i_{p+1} =1} ^{d} \lambda _{i_{1}}....\lambda _{i_{p+1}}
 \mathbf{E}\bigg[\Psi_{N} e(\theta, \boldsymbol{\lambda}, F_{N}) \Gamma^{(p+1)}_{i_{1}, ...,  i_{p+1}} (F_{N}) \bigg]
\nn\\&&
+ {\tt i} ({\tt i} \theta)^{p-1}  \sum_{i_{1}, ..., i_{p} =1} ^{d} \lambda _{i_{1}}....\lambda _{i_{p}}
\mathbf{E}\big[\Psi_{N} e(\theta, \boldsymbol{\lambda}, F_{N})\big]
\mathbf{E}\big[\Gamma^{(p)}_{i_{1}, ...,  i_{p}} (F_{N})\big]
\nn\\&&
+  {\tt i} ({\tt i} \theta)^{p-2}  \sum_{i_{1}, ..., i_{p-1} =1} ^{d} \lambda _{i_{1}}....\lambda _{i_{p-1}}
\mathbf{E}\big[\Psi_{N} e(\theta, \boldsymbol{\lambda}, F_{N})\big] 
\mathbf{E}\big[\Gamma^{(p-1)}_{i_{1}, ...,  i_{p-1}} (F_{N})\big]
\nn\\&&
+\cdots
\nn\\&&
+ {\tt i} ({\tt i} \theta)^{2} \sum_{i_{1}, i_{2}, i_{3} =1} ^{d} \lambda _{i_{1}}\lambda _{i_{2}}\lambda _{i_{3}}
\mathbf{E}\big[\Psi_{N} e(\theta, \boldsymbol{\lambda}, F_{N})\big]
\mathbf{E}\big[\Gamma^{(3)}_{i_{1}, i_{2}, i_{3}} (F_{N})\big]
\nn\\&&
+{\tt i} ({\tt i} \theta)\sum_{i_{1}, i_{2}=1} ^{d}  \lambda _{i_{1}}{ \lambda _{ i_{2}}}
\mathbf{E}\big[\Psi_{N} e(\theta, \boldsymbol{\lambda}, F_{N})\big] 
\big( \mathbf{E}\big[\Gamma^{(2)}_{i_{1}, i_{2}} (F_{N})\big] - C_{i_{1},i_{2}}\big) 
\\&&
+ \sum_{j=1}^pR_{j,N}(\theta,\blambda)
\nn\end{eqnarray}
{ for $(\theta,\blambda)\in[0,1]\times{ \bbR^d}$, {\colorg $\blambda=(\lambda_{1},\ldots, \lambda _{d})$},
where}
\bea\label{rnj}
R_{{ j,N}}(\theta,\blambda)
&=&
{\tt i } ({\tt i} \theta ) ^{j-1} \sum_{i_{1}, ..., i_{{ j}} =1} ^{d} \lambda _{i_{1}}....\lambda _{i_{{ j}}} \mathbf{E}  \left[  e(\theta, \boldsymbol{\lambda}, F_{N})
\big\langle D\Psi_{N}, D(-L)^ {-1} \Gamma ^{(j)}_{i_{1},.., i_{j}} (F_{N})\big\rangle_H \right]
\nn\\&&
\eea
for $j=1,2,.., p$. 
{
In (\ref{20190414-1}), $\Gamma^{(2)}_{i_{1}, i_{2}} (F_{N})$ can be replaced by 
the symmetrized version $\Gamma^{(2sym)}_{i_{1}, i_{2}} (F_{N})=2^{-1}\big(\Gamma^{(2)}_{i_{1}, i_{2}} (F_{N})
+\Gamma^{(2)}_{i_{2}, i_{1}} (F_{N})\big)$.}
\end{lemma}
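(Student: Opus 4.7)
The approach is to differentiate $\varphi_N^\Psi(\theta,\blambda)$ in $\theta$ once and then iterate $p$ times the Malliavin integration-by-parts identity
\beaa
\mathbf{E}\bigl[G(Z-\mathbf{E}[Z])\bigr] &=& \mathbf{E}\bigl[\langle DG, D(-L)^{-1}Z\rangle_H\bigr]
\eeaa
(which follows from $\delta D(-L)^{-1}Z=Z-\mathbf{E}[Z]$) in order to successively introduce higher-order Gamma factors. Applying the chain rule to $e(\theta,\blambda,F_N)$ yields
\beaa
\frac{\partial}{\partial\theta}\varphi_N^\Psi(\theta,\blambda)
&=&
\tti\sum_{j=1}^d\lambda_j\,\mathbf{E}\bigl[\Psi_N F_N^{(j)}e(\theta,\blambda,F_N)\bigr]
\>+\> \theta\,\blambda^T C\blambda\,\mathbf{E}\bigl[\Psi_N e(\theta,\blambda,F_N)\bigr],
\eeaa
which is the starting point of the iteration, and I shall also use the other chain-rule identity
$De(\theta,\blambda,F_N)=\tti\theta\sum_k\lambda_k DF_N^{(k)}\cdot e(\theta,\blambda,F_N)$
throughout.

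\emph{First step.} Since $F_N^{(j)}=\Gamma^{(1)}_j(F_N)$ is centered, the IBP identity with $G=\Psi_N e(\theta,\blambda,F_N)$ and $Z=F_N^{(j)}$ together with the chain rule for $De$ gives, after multiplication by $\tti\lambda_j$ and summation, that
$\tti\sum_j\lambda_j\mathbf{E}[\Psi_N F_N^{(j)}e]$ equals $R_{1,N}(\theta,\blambda)$ plus $\tti(\tti\theta)\sum_{j,k}\lambda_j\lambda_k\mathbf{E}[\Psi_N e\,\Gamma^{(2)}_{j,k}(F_N)]$. Because $\theta=-\tti(\tti\theta)$, combining with the Gaussian compensation $\theta\blambda^T C\blambda\,\mathbf{E}[\Psi_N e]$ and then splitting $\Gamma^{(2)}_{j,k}(F_N)=\mathbf{E}[\Gamma^{(2)}_{j,k}(F_N)]+\bigl(\Gamma^{(2)}_{j,k}(F_N)-\mathbf{E}[\Gamma^{(2)}_{j,k}(F_N)]\bigr)$ produces exactly the $\bigl(\mathbf{E}[\Gamma^{(2)}_{i_1,i_2}(F_N)]-C_{i_1,i_2}\bigr)$ line of the claimed formula, together with a centered residual to which the procedure is iterated.

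\emph{Inductive step.} Assume that after step $k-1$ (with $2\le k\le p$) we have produced the target formula down to level $k$, a partial sum $\sum_{j=1}^{k-1}R_{j,N}(\theta,\blambda)$ of remainders, and a centered residual of the form
\beaa
\tti(\tti\theta)^{k-1}\sum_{i_1,\ldots,i_k=1}^d\lambda_{i_1}\cdots\lambda_{i_k}\,\mathbf{E}\Bigl[\Psi_N e(\theta,\blambda,F_N)\bigl(\Gamma^{(k)}_{i_1,\ldots,i_k}(F_N)-\mathbf{E}[\Gamma^{(k)}_{i_1,\ldots,i_k}(F_N)]\bigr)\Bigr].
\eeaa
Apply the IBP identity with $G=\Psi_N e$ and $Z=\Gamma^{(k)}_{i_1,\ldots,i_k}(F_N)$. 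The $D\Psi_N$-piece yields precisely $R_{k,N}(\theta,\blambda)$ by definition \eqref{rnj}, and the $De$-piece combined with the recursive definition \eqref{gf} yields $\tti(\tti\theta)^k\sum\lambda_{i_1}\cdots\lambda_{i_{k+1}}\mathbf{E}[\Psi_N e\,\Gamma^{(k+1)}_{i_1,\ldots,i_{k+1}}(F_N)]$. Splitting this last quantity into mean plus centered parts when $k\le p-1$ produces the $\mathbf{E}[\Gamma^{(k+1)}_{i_1,\ldots,i_{k+1}}(F_N)]$ line of the formula and sets up step $k+1$; after $p$ iterations one stops without splitting, leaving the leading $\Gamma^{(p+1)}$ term intact.

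\emph{Main obstacle.} The essential technical point is to justify the IBP identity and the convergence of all expectations at each level. This is where Lemma \ref{20181102-1} is used: combined with the boundedness $0\le\Psi_N\le 1$ and $|e(\theta,\blambda,F_N)|\le\exp(\tfrac12\blambda^T C\blambda)$, the hypotheses $F_N\in\bbD_{1,p+1}(\bbR^d)$ and $\Psi_N\in\bbD_{1,p+1}$ guarantee that every factor $\Gamma^{(k)}_{i_1,\ldots,i_k}(F_N)$ and $D(-L)^{-1}\Gamma^{(k)}_{i_1,\ldots,i_k}(F_N)$ encountered lies in a Sobolev space of sufficient integrability, so that the IBP step applies and the full iteration is legitimate. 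Differentiation under the expectation in $\theta$ is justified by the same uniform bounds. Finally, the symmetrization remark is immediate: because $\lambda_{i_1}\lambda_{i_2}$ is symmetric in $(i_1,i_2)$, the double sum is unchanged if $\Gamma^{(2)}_{i_1,i_2}(F_N)$ is replaced by its symmetrization $\Gamma^{(2sym)}_{i_1,i_2}(F_N)$.
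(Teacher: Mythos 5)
Your proof is correct and follows essentially the same route as the paper: the paper also starts from $\partial_\theta\varphi_N^\Psi(\theta,\blambda)=\tti\sum_j\lambda_j\mathbf{E}[\Psi_N e F_N^{(j)}]+\theta\blambda^TC\blambda\,\mathbf{E}[\Psi_N e]$, then repeatedly writes the centered Gamma factor as $\delta D(-L)^{-1}\Gamma^{(k)}$ and applies the duality relation together with $De(\theta,\blambda,F_N)=\tti\theta\,e\sum_k\lambda_k DF_N^{(k)}$, which is exactly your packaged integration-by-parts identity; the $D\Psi_N$ contribution at each stage is $R_{k,N}$ and the top level is left unsplit. Your appeal to Lemma \ref{20181102-1} for the integrability needed at each stage and your symmetrization remark also match the paper's reasoning.
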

\proof
{  
The Malliavin derivative of the random variable $e(\theta, \boldsymbol{\lambda}, F_{N})$ 
given by (\ref{en}) can be calculated as follows
\begin{equation}
\label{den}De (\theta, \boldsymbol{\lambda}, F_{N})
= {\tt i} \theta \>e (\theta, \boldsymbol{\lambda}, F_{N})\sum_{k=1} ^{d} \lambda _{k} DF_{N} ^{(k)}. 
\end{equation}
}
\noindent
We differentiate $\varphi _N^{\Psi} (\theta, \boldsymbol{\lambda} )$ 
given by (\ref{interpol}) 
with respect to $\theta$, and we use the identity (\ref{aaa}) as $F_{N}^ {(j)} = \delta D(-L) ^ {-1} F_{N}^ {(j)} $ for every $j=1,..,d$ and the duality relationship and (\ref{den})  to obtain
\bea\label{1s-1}
 \frac{\partial}{\partial \theta} \varphi _N^{\Psi} (\theta, \boldsymbol{\lambda} ) 
&=&
\sum_{j=1} ^{d} {\tt i} \lambda _{j} \mathbf{E}\big[\Psi _{N}\> e(\theta,\blambda,F_N)
F_{N}^{(j)}\big]
+\sum_{j=1}^{d} \theta \lambda _{j}\lambda _{k} 
\mathbf{E} \big[ \Psi _{N}\> e(\theta,\blambda,F_N)C_{j,k}\big]
\nn\\ &=&
 -\theta \sum_{j,k=1} ^{d} \lambda _{j} \lambda _{k} 
 \mathbf{E} \bigg[ \Psi _{N}\> e(\theta,\blambda,F_N) \left(  \langle DF_{N} ^{(k)}, D(-L) ^{-1} F _{N} ^{(j)}\rangle_{H} - C_{j,k}\right) \bigg] 
\nonumber \\&&
+{\tt i} \sum_{j=1} ^{d}\lambda _{j}  
\mathbf{E}\bigg[ e(\theta,\blambda,F_N)\langle D\Psi_{N} , D(-L) ^{-1} F _{N} ^{(j)}\rangle _{H} \bigg]
\nonumber\\
&=& 
{\tt i } ( {\tt i} \theta) \sum_{i_{1}, i_{2}=1} ^{d} \lambda _{i_{1}}{\lambda_{i_{2}}}
 \mathbf{E}\big[\Psi_{N} e(\theta, \lambda, F_{N})\big] 
\big( \Gamma^{(2)}_{i_{1}, i_{2}} (F_{N}) - C_{i_{1},i_{2}}\big) 
\nn\\&&
+ R_{1,N}(\theta,\blambda)
\eea
with 
\beas 
R_{1,N}(\theta,\blambda)
&=&
{\tt i} \sum_{j=1} ^{d}\lambda _{j}  
\mathbf{E} \bigg[ e(\theta, \boldsymbol{\lambda}, F_{N})\langle D\Psi_{N} , D(-L) ^{-1} F _{N} ^{(j)}\rangle _{H} \bigg].
\eeas
The formula (\ref{1s-1}) has been also obtain in \cite{TY} and it allows to obtain the second order terms in the asymptotic expansion of the sequence $(F_{N})_{N\geq 1}$. In order to get the higher order term in this asymptotic expansion { when $p\geq2$}, we refine the above formula (\ref{1s-1}). 
By (\ref{aaa}), we write
\begin{eqnarray}\label{201812292250}
\Gamma ^{(2)} _{i_{1}, i_{2}}(F_N)- C_{i_{1},i_{2}}
\nn&=&
\Gamma ^{(2)} _{i_{1}, i_{2}}(F_N) -E \big[\Gamma ^{(2)} _{i_{1}, i_{2}}(F_N)\big]+E\big[\Gamma ^{(2)} _{i_{1}, i_{2}}(F_N)\big]-C_{i_{1}, i_{2}}
\\
&=& \delta D (-L) ^{-1} \Gamma_{i_{1}, i_{2} } (F_{N}) +E\big[\Gamma ^{(2)} _{i_{1}, i_{2}}(F_N)\big]-C_{i_{1}, i_{2}}
\end{eqnarray}
for every $i_{1}, i_{2} \in \{1,.., d\}$, 
and we apply the duality and (\ref{den}) once again to obtain 
\bea\label{1s-2}&&
\E \big[\Psi_{N} e(\theta,\boldsymbol{ \lambda}, F_{N}) 
\delta D (-L) ^{-1} \Gamma_{i_{1}, i_{2} }^{{ (2)}}  (F_{N}) \big]
\nn\\&=&
{\tt i} \theta \E \bigg[\Psi _{N} e(\theta,\boldsymbol{ \lambda}, F_{N}) \sum_{i_{3}=1} ^{d} \lambda _{i_{3}} \langle DF_{N} ^{(i_{3})} , D (-L) ^{-1} \Gamma_{i_{1}, i_{2} }^{{ (2)}}  (F_{N}) \rangle_H\bigg] \nonumber 
\\&&
+\E \bigg[  e(\theta,\boldsymbol{ \lambda}, F_{N}) 
\langle D\Psi_{N},  D (-L) ^{-1} \Gamma_{i_{1}, i_{2} }^{{ (2)}}  (F_{N}) \rangle_H\bigg]. 
\eea
From (\ref{1s-1}), (\ref{201812292250}) and (\ref{1s-2}), we obtain 
\begin{eqnarray*}
 \frac{\partial}{\partial \theta} \varphi _N^{\Psi} (\theta, \boldsymbol{\lambda} ) 
 &=&
  {\tt i} ({\tt i} \theta)^{2} \sum_{i_{1}, i_{2}, i_{3} =1} ^{d} \lambda _{i_{1}}\lambda _{i_{2}}\lambda _{i_{3}}
  \E \big[\Psi_{N} e(\theta, \boldsymbol{\lambda}, F_{N}) \Gamma^{(3)}_{i_{1}, i_{2}, i_{3}} (F_{N}) \big]\\
&&
+ {\tt i} ({\tt i} \theta)\sum_{i_{1}, i_{2}=1} ^{d}  \lambda _{i_{1}}\lambda _{ i_{2}} 
\E \big[\Psi_{N} e(\theta, \boldsymbol{\lambda}, F_{N})\big]
\big( \E\big[\Gamma^{(2)}_{i_{1}, i_{2}} (F_{N})\big] - C_{i_{1},i_{2}}\big) \\
&&+ R_{1,N}(\theta,\blambda)+ R_{2,N} (\theta,\blambda)
\end{eqnarray*}
for every $\theta \in [0,1]$ and  $\boldsymbol{\lambda}\in{ \bbR^d}$, where 
\beas
R_{2,N}(\theta,\blambda)
&=&
 {\tt i} ({\tt i} \theta)\sum_{i_{1}, i_{2}=1} ^{d}  \lambda _{i_{1}}\lambda _{ i_{2}} 
\E \bigg[ e(\theta, \boldsymbol{\lambda}, F_{N}) \langle D\Psi_{N}, D(-L) ^{-1} \Gamma ^{(2)} _{i_{1}, i_{2}} (F_{N})\rangle_H\bigg].
\eeas
By iterating this procedure, we obtain the desired expansion of  
$\frac{\partial}{\partial \theta} \varphi _N^{\Psi} (\theta, \boldsymbol{\lambda} )$. 
\qed\halflineskip

{ 
\subsection{Approximation to the characteristic functional and estimate of the error}
{\colorg The result in Lemma \ref{20181031-1} shows that the behavior of the characteristic functional $\varphi _N^{\Psi} (\theta, \boldsymbol{\lambda} ) $ is closely related to the behavior of the Gamma factors (or equivalently, the cumulants) of $F_{N}$. Let us define, for $(\theta,\blambda)\in[0,1]\times{ \bbR^d}$,  the following quantity, which will be called in the sequel {\it the principal part} }
\bea\label{20181117-1}
\sfP_N(\theta,\blambda)
\nn&=& 
{\tt i} ({\tt i} \theta)^{p}  \sum_{i_{1}, ..., i_{p+1} =1} ^{d} \lambda _{i_{1}}....\lambda _{i_{p+1}}\E\big[\Gamma^{(p+1)}_{i_{1}, ...,  i_{p+1}} (F_{N})\big] \nonumber \\
&&+ 
{\tt i} ({\tt i} \theta)^{p-1}  \sum_{i_{1}, ..., i_{p} =1} ^{d} \lambda _{i_{1}}....\lambda _{i_{p}}
\E\big[ \Gamma^{(p)}_{i_{1}, ...,  i_{p}} (F_{N})\big]
\nn\\&&+  {\tt i} ({\tt i} \theta)^{p-2}  \sum_{i_{1}, ..., i_{p-1} =1} ^{d} \lambda _{i_{1}}....\lambda _{i_{p-1}}\E\big[ \Gamma^{(p-1)}_{i_{1}, ...,  i_{p-1}} (F_{N}) \big]
\nonumber\\&&
+\cdots
\nonumber\\&&
+ {\tt i} ({\tt i} \theta)^{2} \sum_{i_{1}, i_{2}, i_{3} =1} ^{d} \lambda _{i_{1}}\lambda _{i_{2}}\lambda _{i_{3}} \E\big[\Gamma^{(3)}_{i_{1}, i_{2}, i_{3}} (F_{N})\big] 
\nonumber \\&&
+{\tt i} ({\tt i} \theta)\sum_{i_{1}, i_{2}=1} ^{d}  \lambda _{i_{1}}\lambda _{ i_{2}}  
\big( \E\big[\Gamma^{(2)}_{i_{1}, i_{2}} (F_{N})\big] - C_{i_{1},i_{2}}\big)
\eea
and let
\beas 
\sfR_N(\theta,\blambda)
&=&
{\tt i} ({\tt i} \theta)^{p}  \sum_{i_{1}, ..., i_{p+1} =1} ^{d} \lambda _{i_{1}}....\lambda _{i_{p+1}}
\E \bigg[\Psi_{N} e(\theta, \boldsymbol{\lambda}, F_{N})\bigg( \Gamma^{(p+1)}_{i_{1}, ...,  i_{p+1}} (F_{N}) 
-\E\big[\Gamma^{(p+1)}_{i_{1}, ...,  i_{p+1}} (F_{N})\big]\bigg)\bigg]
\\&&
+ \sum_{j=1}^pR_{j,N}(\theta,\blambda). 
\eeas

\begin{en-text}
For $\sfk\in\bbN$, let 
\beas 
\check{\sfP}_N(\theta,\blambda)
&=&
\sfP_N(\theta,\blambda)+
\sum_{j=1}^\sfk 
\int_0^\theta\int_0^{\theta_1}\cdots\int_0^{\theta_{\sfk-1}}
d\theta_j\cdots d\theta_2d\theta_1
\sfP_N(\theta_j,\blambda)\cdots \sfP_N(\theta_1,\blambda)\sfP_N(\theta,\blambda),
\eeas
\beas
\check{\sfR}_N(\theta,\blambda)
&=&
\bigg\{ \underline{\sfR_N(\theta,\blambda)}+
\sum_{j=1}^\sfk 
\int_0^\theta\int_0^{\theta_1}\cdots\int_0^{\theta_{j-1}}
d\theta_j\cdots d\theta_2d\theta_1
\sfR_N(\theta_j,\blambda)\sfP_N(\theta_{j-1},\blambda)\cdots \sfP_N(\theta_1,\blambda)\sfP_N(\theta,\blambda)\bigg\}
\eeas
and
\bea\label{20181030-1} 
\bar{\sfR}_N(\theta,\blambda)
&=&
\int_0^\theta\int_0^{\theta_1}\cdots\int_0^{\theta_{\sfk}}
d\theta_{\sfk+1}\cdots d\theta_2d\theta_1
\bigg\{
\sfR_N(\theta_{\sfk+1},\blambda)
+\underline{\varphi _{F_{N}} ^{\Psi} (\theta_{\sfk+1}, \boldsymbol{\lambda})}
\bigg\}  
\nn\\&&
\times
\sfP_N(\theta_{\sfk},\blambda)\cdots \sfP_N(\theta_1,\blambda)\sfP_N(\theta,\blambda)
\eea

By definition, 
$\sfP_N(\theta,\blambda)$ and $\sfR_N(\theta,\blambda)$ depend on $p$, and 
$\check{\sfP}_N(\theta,\blambda)$, $\check{\sfR}_N(\theta,\blambda)$ and $\bar{\sfP}_N(\theta,\blambda)$ 
depend on $\sfk$ as well as $p$.

\begin{lemma}\label{20181114-11}
Suppose that $F_N\in\bbD_{p+1,p+1}(\bbR^d)$ and that $[\Psi]$ holds. 
\bea\label{20181113-1}
\frac{\partial}{\partial \theta} \varphi _N^{\Psi} (\theta, \boldsymbol{\lambda} )
&=&
\varphi _N^{\Psi} (0, \boldsymbol{\lambda})
\check{\sfP}_N(\theta,\blambda)
+\check{\sfR}_N(\theta,\blambda)
+
\bar{\sfR}_N(\theta,\blambda).
\eea
\end{lemma}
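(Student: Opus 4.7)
The strategy is to recognize the expansion in Lemma \ref{20181031-1} as a first-order linear ODE in $\theta$ for $\varphi_N^\Psi(\cdot,\blambda)$ and iterate it $\sfk+1$ times by Picard substitution. The first step is to compress the conclusion (\ref{20190414-1}) into the compact form
\[
\frac{\partial}{\partial \theta} \varphi _N^{\Psi}(\theta, \blambda) \;=\; \varphi _N^{\Psi}(\theta, \blambda)\,\sfP_N(\theta,\blambda) \;+\; \sfR_N(\theta,\blambda).
\]
Indeed, each summand of (\ref{20190414-1}) with $j\leq p$ already factors as $\varphi_N^\Psi(\theta,\blambda)=\mathbf{E}[\Psi_N e(\theta,\blambda,F_N)]$ multiplied by $\mathbf{E}[\Gamma^{(j)}_{i_1,\ldots,i_j}(F_N)]$ (or $\mathbf{E}[\Gamma^{(2)}]-C_{i_1,i_2}$ when $j=2$), which are precisely the summands of $\sfP_N$ in (\ref{20181117-1}). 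The leading term $\mathbf{E}[\Psi_N e(\theta,\blambda,F_N)\,\Gamma^{(p+1)}_{i_1,\ldots,i_{p+1}}(F_N)]$ is split by writing $\Gamma^{(p+1)}=\mathbf{E}[\Gamma^{(p+1)}]+(\Gamma^{(p+1)}-\mathbf{E}[\Gamma^{(p+1)}])$: the constant piece merges into $\varphi_N^\Psi\sfP_N$ while the centered piece gives the first contribution to $\sfR_N$. The boundary-type terms $R_{1,N},\ldots,R_{p,N}$ of (\ref{rnj}) produced by $D\Psi_N$ supply the rest of $\sfR_N$.

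Next, I iterate using $\varphi_N^\Psi(\theta',\blambda)=\varphi_N^\Psi(0,\blambda)+\int_0^{\theta'}\partial_s\varphi_N^\Psi(s,\blambda)\,ds$ applied to the factor $\varphi_N^\Psi$ on the right-hand side of the ODE. Each substitution peels off a term $\varphi_N^\Psi(0,\blambda)$ multiplied by one more nested integral of $\sfP_N$'s (accumulating into the successive summands that define $\check{\sfP}_N$), plus an $\sfR_N$-term at the fresh depth multiplied by the same chain of $\sfP_N$'s (accumulating into $\check{\sfR}_N$), plus a strictly deeper remainder in which $\partial_{\theta_{j+1}}\varphi_N^\Psi$ appears one level down. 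After $\sfk+1$ such substitutions, the innermost un-expanded integrand is
\[
\partial_{\theta_{\sfk+1}}\varphi_N^\Psi(\theta_{\sfk+1},\blambda)\;=\;\sfR_N(\theta_{\sfk+1},\blambda)+\varphi_N^\Psi(\theta_{\sfk+1},\blambda)\sfP_N(\theta_{\sfk+1},\blambda),
\]
which is exactly what the braces in the definition of $\bar{\sfR}_N(\theta,\blambda)$ encode. A clean induction on $\sfk$, with base case $\sfk=0$ given by the rewritten ODE itself and inductive step given by one more Picard substitution, matches the three resulting partial sums with the definitions of $\check{\sfP}_N$, $\check{\sfR}_N$, and $\bar{\sfR}_N$.

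Well-definedness and Fubini's theorem at every step are guaranteed by Lemma \ref{20181102-1}: since $F_N\in\bbD_{p+1,p+1}(\bbR^d)$ and $\Psi_N$ is bounded by $1$ with $\Psi_N\in\bbD_{1,p+1}$, each Gamma factor $\Gamma^{(j)}_{i_1,\ldots,i_j}(F_N)$ for $2\leq j\leq p+1$ lies in $L^1(\Omega)$ and each inner product $\langle D\Psi_N,D(-L)^{-1}\Gamma^{(j-1)}_{\ldots}(F_N)\rangle_H$ is integrable. The main obstacle is the purely combinatorial bookkeeping: one must verify at every iteration level that the ordering of the integration variables $\theta_1,\ldots,\theta_{\sfk+1}$ is preserved, that the partial-exponential sum of iterated $\sfP_N$ integrals is matched term-by-term with $\check{\sfP}_N$, and that the $\sfR_N$-contributions are split off at their correct depth without any double counting across levels.
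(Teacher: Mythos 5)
Your argument is correct and coincides with the paper's own proof: the paper likewise compresses Lemma \ref{20181031-1} into the linear relation $\frac{\partial}{\partial\theta}\varphi^{\Psi}_N(\theta,\blambda)=\varphi^{\Psi}_N(\theta,\blambda)\,\sfP_N(\theta,\blambda)+\sfR_N(\theta,\blambda)$ and then performs exactly this Picard-type recursive substitution of $\varphi^{\Psi}_N(\theta_j,\blambda)=\varphi^{\Psi}_N(0,\blambda)+\int_0^{\theta_j}\partial_{\theta_{j+1}}\varphi^{\Psi}_N\,d\theta_{j+1}$ into the $\varphi^{\Psi}_N\sfP_N$ term, switching the innermost derivative back by the same relation to produce $\bar{\sfR}_N$. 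Your identification of the three resulting partial sums with $\check{\sfP}_N$, $\check{\sfR}_N$ and $\bar{\sfR}_N$ is precisely what the paper does (explicitly for $\sfk=2$, then by repetition).
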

\proof By Lemma \ref{20181031-1}, we have  
\bea\label{20181113-2} 
\frac{\partial}{\partial \theta} \varphi _{F_{N}} ^{\Psi} (\theta, \boldsymbol{\lambda} )
&=&
\sfR_N(\theta,\blambda)
+\varphi _{F_{N}} ^{\Psi} (\theta, \boldsymbol{\lambda})\sfP_N(\theta,\blambda).
\eea
Therefore, 
\beas
\frac{\partial}{\partial \theta} \varphi _{F_{N}} ^{\Psi} (\theta, \boldsymbol{\lambda} )
&=&
\sfR_N(\theta,\blambda)
+\varphi _{F_{N}} ^{\Psi} (0, \boldsymbol{\lambda})\sfP_N(\theta,\blambda)
+\int_0^\theta
\frac{\partial}{\partial\theta_1}\varphi _{F_{N}} ^{\Psi} (\theta_1, \boldsymbol{\lambda})d\theta_1
\sfP_N(\theta,\blambda).
\eeas
Recursively using the last expression, we obtain
\beas 
\frac{\partial}{\partial \theta} \varphi _{F_{N}} ^{\Psi} (\theta, \boldsymbol{\lambda} )
&=&
\sfR_N(\theta,\blambda)
+\varphi _{F_{N}} ^{\Psi} (0, \boldsymbol{\lambda})\sfP_N(\theta,\blambda)
\\&&
+\int_0^\theta
\sfR_N(\theta_1,\blambda)
d\theta_1
\sfP_N(\theta,\blambda)
+\varphi _{F_{N}} ^{\Psi} (0, \boldsymbol{\lambda})\int_0^\theta
\sfP_N(\theta_1,\blambda)
d\theta_1
\sfP_N(\theta,\blambda)
\\&&
+\int_0^\theta
\int_0^{\theta_1}
\frac{\partial}{\partial\theta_2}\varphi _{F_{N}} ^{\Psi} (\theta_2, \boldsymbol{\lambda})d\theta_2
\sfP_N(\theta_1,\blambda)d\theta_1
\sfP_N(\theta,\blambda)
\\&=&
\sfR_N(\theta,\blambda)
+\varphi _{F_{N}} ^{\Psi} (0, \boldsymbol{\lambda})\sfP_N(\theta,\blambda)
\\&&
+\int_0^\theta
\sfR_N(\theta_1,\blambda)
d\theta_1
\sfP_N(\theta,\blambda)
+\varphi _{F_{N}} ^{\Psi} (0, \boldsymbol{\lambda})
\int_0^\theta\sfP_N(\theta_1,\blambda)d\theta_1\sfP_N(\theta,\blambda)
\\&&
+\int_0^\theta
\int_0^{\theta_1}
\sfR_N (\theta_2, \boldsymbol{\lambda})d\theta_2
\sfP_N(\theta_1,\blambda)d\theta_1
\sfP_N(\theta,\blambda)
\\&&
+\varphi _{F_{N}} ^{\Psi} (0, \boldsymbol{\lambda})
\int_0^\theta\int_0^{\theta_1}\sfP_N(\theta_2,\blambda)d\theta_2
\sfP_N(\theta_1,\blambda)d\theta_1\sfP_N(\theta,\blambda)
\\&&
+\int_0^\theta
\int_0^{\theta_1}
\int_0^{\theta_2}
\frac{\partial}{\partial\theta_3}\varphi _{F_{N}} ^{\Psi} (\theta_3, \boldsymbol{\lambda})
d\theta_3\sfP_N(\theta_2,\blambda)
d\theta_2
\sfP_N(\theta_1,\blambda)d\theta_1
\sfP_N(\theta,\blambda).
\eeas
Switching back the last term by (\ref{20181113-2}), then we have 
Formula (\ref{20181113-1}) for $\sfk=2$. 
We may repeat this procedure to finally obtain (\ref{20181113-1}). 
\qed
\halflineskip
\end{en-text}

 The truncated characteristic function can be written in terms of the principal part $\sfP_N(\theta,\blambda)$ and of the {\cred residual} 
 term $\sfR_N(\theta,\blambda)$.
\begin{lemma}\label{lll3}
Suppose that $F_N\in{\bbD_{1,p+1}}(\bbR^d)$ and that $[\Psi]$ holds. 
Then, for any positive number $c$, 
the functional $\varphi^\Psi_N(\theta,\blambda)$ admits the expression
\bea\label{20181119-5}
\varphi^\Psi_N(\theta,\blambda)
&=& 
\varphi^\Psi_N(0,\blambda)\exp\bigg(\int_0^\theta\sfP_N(\theta_1,\blambda)d\theta_1\bigg)
\nn\\&&
+\int_0^\theta \exp\bigg(\int_{\theta_1}^\theta\sfP_N(\theta_2,\blambda)d\theta_2\bigg)
\sfR_N(\theta_1,\blambda)d\theta_1
\eea
for 
$(\theta,\blambda)\in[0,1]\times{ \bbR^d}$. 
\end{lemma}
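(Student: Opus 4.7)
The plan is to reduce the claim to a first-order linear ODE in $\theta$ and then solve it by the integrating-factor method (variation of parameters). The whole argument is quite direct once the right decomposition is spotted.

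The first step is to combine Lemma~\ref{20181031-1} with the definitions of $\sfP_N$ and $\sfR_N$ to establish
\begin{equation*}
\frac{\partial}{\partial\theta}\varphi_N^\Psi(\theta,\blambda) \;=\; \varphi_N^\Psi(\theta,\blambda)\,\sfP_N(\theta,\blambda) + \sfR_N(\theta,\blambda).
\end{equation*}
The key algebraic move is to split the top-order term appearing in (\ref{20190414-1}) as
\begin{equation*}
\E\big[\Psi_N e(\theta,\blambda,F_N)\,\Gamma^{(p+1)}_{i_1,\dots,i_{p+1}}(F_N)\big]
= \varphi_N^\Psi(\theta,\blambda)\,\E\big[\Gamma^{(p+1)}_{i_1,\dots,i_{p+1}}(F_N)\big]
+ \E\big[\Psi_N e(\theta,\blambda,F_N)\bigl(\Gamma^{(p+1)}_{i_1,\dots,i_{p+1}}(F_N) - \E[\Gamma^{(p+1)}_{i_1,\dots,i_{p+1}}(F_N)]\bigr)\big],
\end{equation*}
using $\varphi_N^\Psi(\theta,\blambda)=\E[\Psi_N e(\theta,\blambda,F_N)]$. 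The first contribution is absorbed into $\varphi_N^\Psi\,\sfP_N$, the second into $\sfR_N$. All intermediate terms of (\ref{20190414-1}) for orders $p,p-1,\dots,3$, as well as the $(\Gamma^{(2)}-C)$ term, already factor as $\varphi_N^\Psi(\theta,\blambda)$ times the corresponding summand of $\sfP_N$; the Malliavin remainders $R_{1,N},\dots,R_{p,N}$ from (\ref{rnj}) are exactly the remaining contribution to $\sfR_N$.

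The second step is the ODE solution. Writing $A(\theta)=\int_0^\theta\sfP_N(\theta_1,\blambda)\,d\theta_1$ and multiplying the displayed equation by the integrating factor $e^{-A(\theta)}$ yields
\begin{equation*}
\frac{\partial}{\partial\theta}\bigl(e^{-A(\theta)}\varphi_N^\Psi(\theta,\blambda)\bigr) \;=\; e^{-A(\theta)}\,\sfR_N(\theta,\blambda).
\end{equation*}
Integrating on $[0,\theta]$ and multiplying back by $e^{A(\theta)}$ produces exactly the stated identity, since $e^{A(\theta)-A(\theta_1)}=\exp\bigl(\int_{\theta_1}^\theta\sfP_N(\theta_2,\blambda)\,d\theta_2\bigr)$.

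Finally, I would justify the analytic manipulations. By Lemma~\ref{20181102-1}, the assumption $F_N\in\bbD_{1,p+1}(\bbR^d)$ together with $[\Psi]$ ensures that all the Gamma-factors in (\ref{20190414-1}) and all Malliavin brackets in (\ref{rnj}) have finite moments, while $|e(\theta,\blambda,F_N)|\leq 1$ uniformly in $\theta\in[0,1]$. Dominated convergence then implies that $\theta\mapsto\varphi_N^\Psi(\theta,\blambda)$ is $C^1$ on $[0,1]$ with derivative given by Lemma~\ref{20181031-1}, and that $\theta\mapsto\sfP_N(\theta,\blambda)$ and $\theta\mapsto\sfR_N(\theta,\blambda)$ are continuous, so the integrals in the statement and the integrating-factor computation make sense. (The parameter $c$ does not enter the conclusion and plays no role in the argument.) The only real obstacle is the bookkeeping in the first step, i.e.\ correctly matching each summand in (\ref{20190414-1}) with a contribution to $\varphi_N^\Psi\,\sfP_N$ or to $\sfR_N$; once this factorization is in place, the remainder is a textbook variation-of-parameters computation.
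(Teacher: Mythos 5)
Your proposal is correct and follows essentially the same route as the paper: the paper likewise derives $\partial_\theta\varphi_N^\Psi(\theta,\blambda)=\varphi_N^\Psi(\theta,\blambda)\sfP_N(\theta,\blambda)+\sfR_N(\theta,\blambda)$ from Lemma \ref{20181031-1} (with exactly the centering of the top-order $\Gamma^{(p+1)}$ term you describe) and then solves this linear equation to obtain (\ref{20181119-5}). Your write-up merely makes explicit the bookkeeping and the integrating-factor step that the paper compresses into the phrase ``Solve (\ref{20181113-2})''.
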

\proof By Lemma \ref{20181031-1}, we have  
\bea\label{20181113-2} 
\frac{\partial}{\partial \theta} \varphi _N^{\Psi} (\theta, \boldsymbol{\lambda} )
&=&
\sfR_N(\theta,\blambda)
+\varphi _N^{\Psi} (\theta, \boldsymbol{\lambda})\sfP_N(\theta,\blambda).
\eea
Solve (\ref{20181113-2}) to obtain (\ref{20181119-5}). 
\qed
\halflineskip

{\colorg 
In order to obtain the high-order asymptotic expansion, we need to assume several conditions. The first assumptions stated below concern the Malliavin regularity of $F_{N}$ and of its associated Gamma factors.}
We fix a positive number $\sfq$, {that will be the order of 
the asymptotic expansion.}
Given a positive integer $\ell$, we consider the following conditions. 
\bd 
\im[[A1\!\!]] 
{\bf(i)} 
$F_N\in\bbD_{\ell+1,\infty}$ 
and 
\bea\label{20181113-3} 
\sup_{N\in\bbN}\big\|F_N\big\|_{\ell+1,r}
&<&\infty
\eea
for every $r>1$. 
\bd
\im[(ii)] For some positive constant ${\sf a}$ 
{and some $d\times d$ non-singular matrix $C_1$ satisfying $C=2^{-1}\big(C_1+C_1^T\big)$, 
it holds that }
\beas 
\big\|\Gamma^{(2)}(F_N)-{ C_1}\big\|_{{ \ell,}r} &=& O(N^{-{\sf a}})
\eeas
as $N\to\infty$ for every $r>1$. 
\ed
\ed
\halflineskip

Let $\ell_1\in\{1,...,\ell\}$. 
Consider also the condition 
\bd 
\im[[A2\!\!]] 
{\bf (i)} For some $r>1$, 
\bea\label{20181113-12} 
\bigg\|
\Gamma^{(p+1)}_{i_{1}, ...,  i_{p+1}} (F_{N}) 
-\E\big[ \Gamma^{(p+1)}_{i_{1}, ...,  i_{p+1}} (F_{N})\big]
\bigg\|_{\ell_1,r}
&=&
O(N^{-\sfq})
\eea
as $n\to\infty$ 
{ for $i_1,...,i_{p+1}\in\{1,...,d\}$}. 
\bd 
\im[(ii)] For some $r>1$, 
\bea\label{20181114-1}
\bigg\|
\Gamma^{(p+1)}_{i_{1}, ...,  i_{p+1}} (F_{N}) 
-\E\big[ \Gamma^{(p+1)}_{i_{1}, ...,  i_{p+1}} (F_{N})\big]
\bigg\|_r
&=&
o(N^{-\sfq})
\eea
as $n\to\infty$ { for $i_1,...,i_{p+1}\in\{1,...,d\}$}. 
\ed
\ed
\halflineskip

Obviously, a sufficient condition for $[A2]$ is 
\bd 
\im[[A2$^\sharp$\!\!]] 
For some $r>1$, 
\beas 
\bigg\|
\Gamma^{(p+1)}_{i_{1}, ...,  i_{p+1}} (F_{N}) 
-\E\big[ \Gamma^{(p+1)}_{i_{1}, ...,  i_{p+1}} (F_{N})\big]
\bigg\|_{\ell_1,r}
&=&
o(N^{-\sfq})
\eeas
as $n\to\infty$ { for $i_1,...,i_{p+1}\in\{1,...,d\}$}. 
\ed
\halflineskip

In what follows, we will work with $\Psi_N$ defined by 
$\Psi_N=\psi(\Xi_N)$ for 
a function $\psi\in C^\infty(\bbR;[0,1])$ such that 
$\psi(x)=1$ for $x\in[-1/2,1/2]$ and $\psi(x)=0$ for $x\in(-1,1)^c$, and a functional $\Xi_N$ given by 
\begin{equation}\label{8a-1}
\Xi_N
=
K_*N^{2{\sf a}'}\big|\Gamma^{(2)}(F_N)-{ C_1}\big|^2,
\end{equation}
where 
{ $K_*$ is a positive number}
and $\Gamma^{(2)}(F_N)=(\Gamma^{(2)}_{i,j}(F_N))_{i,j=1}^d$. 
By choosing a sufficiently large $K_*$, we may assume, {\colorg due to (\ref{8a-1}),} that there exist positive constants $c_1$ and $c_2$ 
such that 
\bea\label{201812300109} 
c_1\yleq{\big|}\det\Gamma^{(2)}(F_N){ \big|}\yleq c_2
\eea
for all $N\in\bbN$ and a.s. $\omega\in\Omega$ whenever $|\Xi_N|<1$. 


Let $\Lambda_N=\{\blambda\in\bbR^d;\>|\blambda|\leq N^\xi\}$, where $\xi$ is a positive constant. 
We will put a condition ($[A3]$) concerning $\xi$ later. 

\begin{lemma}\label{20181031-2}
Suppose that $[A1]$ is fulfilled. 
\bd
\im[(a)] 
Suppose that $[A2]$ $(i)$ is satisfied. 
Then, for any $\alpha\in\bbZ_+^d$, there exists a constant $C_\alpha$ such that 
\bea\label{20181113-4} 
\sup_{N\in\bbN}\sup_{\theta\in[0,1]}
1_{\{\blambda\in\Lambda_N\}}
N^\sfq
\bigg|\frac{\partial^\alpha}{\partial\blambda^\alpha}
\sfR_N(\theta,\blambda)\bigg|
&\leq&
C_\alpha(1+|\blambda|)^{-\ell_1+{ p+1}}
\quad(\blambda\in\bbR^d)
\eea
\im[(b)]
Suppose that $[A2]$ $(ii)$ is satisfied. 
Then 
\beas 
\sup_{\theta\in[0,1]}
\bigg|\frac{\partial^\alpha}{\partial\blambda^\alpha}
\sfR_N(\theta,\blambda)\bigg|
&=&
o(N^{-\sfq})
\eeas
as $n\to\infty$ for every $\blambda\in\bbR^d$ and $\alpha\in\bbZ_+^d$. 
\ed
\end{lemma}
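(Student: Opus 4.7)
The strategy is to split $\sfR_N(\theta,\blambda)$ into its principal part
\begin{equation*}
T_N(\theta,\blambda) \;=\; \tti(\tti\theta)^p\sum_{i_1,\ldots,i_{p+1}=1}^d \lambda_{i_1}\cdots\lambda_{i_{p+1}}\,\E\big[\Psi_N\,e(\theta,\blambda,F_N)\,G_N^{(i_1,\ldots,i_{p+1})}\big],
\end{equation*}
where $G_N^{(i_1,\ldots,i_{p+1})}:=\Gamma^{(p+1)}_{i_1,\ldots,i_{p+1}}(F_N)-\E[\Gamma^{(p+1)}_{i_1,\ldots,i_{p+1}}(F_N)]$, and the ``boundary'' terms $R_{j,N}$ for $j=1,\ldots,p$, and to bound each piece separately. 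The key structural fact is that on $\{\Psi_N\neq 0\}$, estimate (\ref{201812300109}) makes the Malliavin covariance matrix of $F_N$ uniformly non-degenerate in $N$ and $\omega$; combined with $[A1]$ $(i)$, this provides uniform bounds on the Malliavin weights appearing in integration by parts (IBP).

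For the main term $T_N$, I would first compute $\partial_\blambda^\alpha T_N$ using the identity $\partial_{\lambda_j}e(\theta,\blambda,F_N)=\big(\tti\theta F_N^{(j)}-(1-\theta^2)(C\blambda)_j\big)e(\theta,\blambda,F_N)$ together with Leibniz on the prefactor $\lambda_{i_1}\cdots\lambda_{i_{p+1}}$. This expresses $\partial^\alpha_\blambda T_N$ as a finite linear combination of terms of the form $\theta^a\blambda^\beta\,\E[\Psi_N\,e(\theta,\blambda,F_N)\,Z(F_N)\,G_N^{(\cdot)}]$ with $|\beta|\leq p+1+|\alpha|$ and $Z(F_N)$ a polynomial in $F_N^{(1)},\ldots,F_N^{(d)}$ of bounded degree (depending only on $p$ and $|\alpha|$). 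On each such term I would apply Malliavin IBP $\ell_1$ times, inverting $(\tti\theta\lambda_k)e(\theta,\blambda,F_N) = \langle De(\theta,\blambda,F_N),DF_N^{(k)}\rangle_H$ against the non-degenerate Malliavin matrix of $F_N$ to trade $\ell_1$ of the $\blambda$-factors for a Malliavin--Sobolev norm of order $\ell_1$. The resulting bound is, schematically,
\begin{equation*}
\big|\theta^a\blambda^\beta\,\E[\,\cdots\,]\big|\;\leq\;C\,(1+|\blambda|)^{|\beta|-\ell_1}\,\big\|\Psi_N\,Z(F_N)\,G_N^{(\cdot)}\big\|_{\ell_1,r}
\end{equation*}
for $\theta$ bounded away from $0$; near $\theta=0$ the Gaussian factor $\exp(-\tfrac12(1-\theta^2)\blambda^T C\blambda)$ dominates any polynomial in $|\blambda|$ and supplies the required decay directly. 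Leibniz on the Sobolev norm, together with $[A1]$ $(i)$, Lemma \ref{20181102-1}, and the bound $\|G_N^{(\cdot)}\|_{\ell_1,r}=O(N^{-\sfq})$ from $[A2]$ $(i)$, yields the $N^{-\sfq}$ factor. The largest value of $|\beta|-\ell_1$ arising in the finite decomposition is $p+1-\ell_1$ (attained when the $|\alpha|$ derivatives in $\blambda$ are spent on the explicit polynomial prefactor rather than on the exponential), which produces the exponent $-\ell_1+p+1$ of (\ref{20181113-4}).

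Each boundary term $R_{j,N}$ and its $\partial^\alpha_\blambda$ derivative is supported on $\{D\Psi_N\neq 0\}\subseteq\{1/2\leq\Xi_N\leq 1\}\subseteq\{|\Gamma^{(2)}(F_N)-C_1|\geq c N^{-\sfa'}\}$. By $[A1]$ $(ii)$ and Chebyshev's inequality, $P(\supp D\Psi_N)\leq C_r N^{-r(\sfa-\sfa')}$ for every $r>1$, provided $\sfa>\sfa'$ (a balance to be codified in the condition $[A3]$ on $\xi$). H\"older's inequality, combined with the uniform Malliavin bounds on $D(-L)^{-1}\Gamma^{(j)}(F_N)$ from Lemma \ref{20181102-1} and the observation that on $\Lambda_N=\{|\blambda|\leq N^\xi\}$ the polynomial factor $|\blambda|^{j+|\alpha|}$ is at most $N^{(j+|\alpha|)\xi}$, then yields $\sup_{\theta\in[0,1]}|\partial^\alpha_\blambda R_{j,N}(\theta,\blambda)|=o(N^{-K})$ for every $K>0$, uniformly in $\blambda\in\Lambda_N$; in particular these terms are negligible relative to (\ref{20181113-4}). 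For part (b), the same decomposition applies, but the IBP on the main term is replaced by a direct Cauchy--Schwarz estimate using $\|G_N^{(\cdot)}\|_r=o(N^{-\sfq})$ from $[A2]$ $(ii)$, yielding pointwise-in-$\blambda$ decay $o(N^{-\sfq})$; the $R_{j,N}$-terms are handled exactly as before. The main obstacle I anticipate is the bookkeeping: carefully tracking the degree in $|\blambda|$ through both differentiation and repeated IBP, and verifying that the Malliavin--Sobolev norms of $\Psi_N$, which can scale with powers of $N^{2\sfa'}$ through $\Xi_N$, are controlled uniformly on the relevant support---this is precisely what the balance $\sfa'<\sfa$ and the restriction to $\Lambda_N$ are designed to enforce.
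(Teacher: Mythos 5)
Your proposal follows essentially the same route as the paper's proof: the same decomposition of $\sfR_N$ into the centered-$\Gamma^{(p+1)}$ term and the $R_{j,N}$ terms, the same Leibniz expansion producing polynomial weights in $F_N$ and $\blambda$, the same $\ell_1$-fold Malliavin integration by parts against the Malliavin matrix made uniformly non-degenerate by (\ref{201812300109}) to convert $|\blambda|^{\ell_1}$ into Sobolev norms controlled by $[A1]$ and $[A2]$, the same Gaussian-factor argument near $\theta=0$, and the same rapid decay of $D\Psi_N$ combined with the restriction to $\Lambda_N$ for the boundary terms. The argument and the resulting exponent $-\ell_1+p+1$ match the paper's proof, so no further comment is needed.
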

\begin{remark}\rm 
(i) 
Under the assumption that $\sup_{N\in\bbN}\big\|F_N\big\|_{\ell+1,(p+1)r}<\infty$, we have 
\beas \sup_{N\in\bbN}\|\Gamma^{(p+1)}_{i_{1}, ...,  i_{p+1}} (F_{N})\|_{\ell,r}<\infty\eeas
thanks to Lemma \ref{20181102-1}. 
In this situation, 
Condition (\ref{20181113-12}) is requesting the order $N^{-\sfq}$ to the norm. 
(ii) 
It is possible to show a similar result under the condition that 
\beas 
\big\|
\Gamma^{(p+1)}_{i_{1}, ...,  i_{p+1}} (F_{N})
\big\|_{\ell_1,r}
&=&
o(N^{-\sfq})
\eeas
instead of $[A2]$ 
{ though we will not pursue it here.} 
{ (iii) Logically, $\ell$ in Lemma \ref{20181031-2} can be $\ell_1$. }
\end{remark}
{\it Proof of Lemma \ref{20181031-2}.} 
\begin{en-text}
\noindent
{ {\bf Strategy for the proof (Later erased).} Use IBP and also the decay of 
$\Gamma^{(p+1)}_{i_{1}, ...,  i_{p+1}} (F_{N})
$. 
This is a subtle part. 
It is possible to get minus power by $\Lambda_N$, which reduces differentiability of 
$\Gamma^{(p+1)}_{i_{1}, ...,  i_{p+1}} (F_{N}) 
$. 
Or if $p$ is large, then we only need estimate of $\|\Gamma^{(p+1)}_{i_{1}, ...,  i_{p+1}} (F_{N})\|_1$ 
if it can suppress $|\blambda|^{(p+1)+(d+1)}$. 
}
\end{en-text}
{ 
Let $\widetilde{\Gamma}^{(p+1)}_{i_{1}, ...,  i_{p+1}} (F_{N})
=\Gamma^{(p+1)}_{i_{1}, ...,  i_{p+1}} (F_{N})
-\E\big[\Gamma^{(p+1)}_{i_{1}, ...,  i_{p+1}} (F_{N})\big]$. 
For $\alpha\in\bbZ_+^d$, we have 
\bea\label{201904201101} 
\frac{\partial^\alpha}{\partial\blambda^\alpha}\sfR_N(\theta,\blambda)
&=&
{\tt i} ({\tt i} \theta)^{p}  \sum_{i_{1}, ..., i_{p+1} =1} ^{d} 
\left(\begin{array}{c}\alpha\\ \gamma\end{array}\right)
E_{\gamma,N}(\theta,\blambda)
\frac{\partial^{\alpha-\gamma}}{\partial\blambda^{\alpha-\gamma}}\big(\lambda _{i_{1}}....\lambda _{i_{p+1}}\big)
\nn\\&&
+ \sum_{j=1}^p\frac{\partial^\alpha}{\partial\blambda^\alpha}R_{j,N}(\theta,\blambda)
\eea
where 
\beas 
E_{\gamma,N}(\theta,\blambda)
&=&
\E \bigg[\Psi_{N} \frac{\partial^\gamma}{\partial\blambda^\gamma}e(\theta, \boldsymbol{\lambda}, F_{N})
\widetilde{\Gamma}^{(p+1)}_{i_{1}, ...,  i_{p+1}} (F_{N})\bigg]. 
\eeas
By definition, 
\bea\label{201904201109} 
\frac{\partial^\gamma}{\partial\blambda^\gamma}e(\theta, \boldsymbol{\lambda}, F_{N})
&=&
p_\gamma\big(\theta F_N,(1-\theta^2)C\blambda,(1-\theta^2)C\big)\>e(\theta, \boldsymbol{\lambda}, F_{N})
\eea
for a polynomial $p_\gamma$. 

By Lemma \ref{20181102-1} { and $[A1]$ (i)}, 
$\Gamma^{(p+1)}_{i_{1}, ...,  i_{p+1}} (F_{N})\in\bbD_{\ell,r_1}$ for any $r_1>1$. 
We apply the IBP formula {\colorg (see Lemma 2 in \cite{TY})} to 
{\colorg the interpolation functional} $e(\theta,\blambda,F_N)$ 
that is taking advantage of (\ref{201812300109}) and 
based on the chain rule 
\beas 
\big\langle Df(F_N),D{ (-L)}^{-1}F_N^{(i_1)}\big\rangle_H 
&=& 
\sum_{i_2=1}^d\frac{\partial f}{\partial x_{i_2}}(F_N)\Gamma^{(2)}_{i_1,i_2}(F_N)
\eeas
for $f\in C^1(\bbR^d)$ of at most polynomial growth,  
in order to obtain 
\beas &&
({\tt i}\theta\blambda)^\beta
\E \bigg[\Psi_{N} e^{{\tt i}\theta\langle \blambda,F_N\rangle}
p_\gamma\big(\theta F_N,(1-\theta^2)C\blambda,(1-\theta^2)C\big)
\widetilde{\Gamma}^{(p+1)}_{i_{1}, ...,  i_{p+1}} (F_{N}) \bigg]
\nn\\&=&
\E \bigg[e^{{\tt i}\theta\langle \blambda,F_N\rangle}
\Phi^{F_N}_\beta\bigg(\Psi_N
p_\gamma\big(\theta F_N,(1-\theta^2)C\blambda,(1-\theta^2)C\big)
\widetilde{\Gamma}^{(p+1)}_{i_{1}, ...,  i_{p+1}} (F_{N})\bigg) \bigg]
\eeas
for $\blambda\in\Lambda_N$ and 
$\beta\in\bbZ_+^d$ with { the sum of the components} $|\beta|=\ell_1$. 
Here $\Phi^{F_N}_\beta$ 
is a linear functional and we see that 
\bea\label{20181101-1}
\sup_{{N\in\bbN\atop\theta\in[0,1]}\atop\blambda\in\bbR^d}
\bigg\|
\Phi^{F_N}_\beta\bigg(\Psi_N
p_\gamma\big(\theta F_N,(1-\theta^2)C\blambda,(1-\theta^2)C\big)
N^\sfq\widetilde{\Gamma}^{(p+1)}_{i_{1}, ...,  i_{p+1}} (F_{N})\bigg) 
e^{-2^{-1}(1-\theta^2)\blambda^TC\blambda}
\bigg\|_1
&<&
\infty.
\nn\\&&
\eea
Here we used (\ref{20181113-3}), (\ref{20181113-12}) for which $r$ is strictly larger than $1$, 
and the estimate 
\beas 
\big|(1-\theta^2)C\blambda\big|
&\leq&
\big|C^{1/2}\big|\big(1+(1-\theta^2)\blambda^TC\blambda\big).
\eeas
For each $\lambda=(\lambda_i)\in\check{\Lambda}_N$, we choose a component $i$ 
that attains $\max\{|\lambda^j|;\>j=1,...d\}$ and use the above estimate for $\beta=(i,i...,i)$. 
Then we obtain 
\bea\label{201812300239} 
\sup_{n\in\bbN,\>\theta\in[1/2,1],\>\blambda\in\bbR^d}
\bigg\{N^\sfq
|\blambda|^{\ell_1}
\big|E_{\gamma,N}(\theta,\blambda)\big|\bigg\}
&<&
\infty.
\eea
Moreover, there exists a positive constant $c_0$ such that 
\bea\label{201812300252}
\sup_{N\in\bbN,\>\theta\in[0,1/2)}\big|e(\theta,\blambda,F_N)\big|
&\leq&
c_0^{-1}\exp\big(-c_0|{\bf \blambda}|^2\big)
\eea
for all $\blambda\in\bbR^d$. 
From (\ref{201812300239}) and (\ref{201812300252}), we obtain 
\bea\label{201812300253}&&
\sup_{N\in\bbN}\sup_{\theta\in[0,1]}
\bigg|
E_{\gamma,N}(\theta,\blambda)
\frac{\partial^{\alpha-\gamma}}{\partial\blambda^{\alpha-\gamma}}\big(\lambda _{i_{1}}....\lambda _{i_{p+1}}\big)
\bigg|
\nn\\&\leq&
K_0(1+|\blambda|)^{-\ell_1+{ p+1}}
\eea
for all $\lambda\in\bbR^d$, where $K_0$ is a constant depending on $\alpha-\gamma$ 
but independent of $\lambda\in\bbR^d$. 
}
%
\begin{en-text}
In particular, 
\bea\label{20181101-2} 
\sup_{N\in\bbN}
\big\||F_N|
\Phi^{F_N}_\beta\big(\Psi_NN^\sfq\Gamma^{(p+1)}_{i_{1}, ...,  i_{p+1}} (F_{N})\big)\big\|_1
&<&
\infty. 
\eea
\end{en-text}

Since $\|D\Psi_N\|_2=O(N^{-m})$ for every $m>0$, 
for $\Lambda_N=\{|\blambda|\leq N^\xi\}$ and for every $k>0$ and $s>0$, 
we see
\bea\label{20181101-3} 
\limsup_{N\to\infty}\sup_{\blambda\in\Lambda_N}
\bigg(N^s|\blambda|^k\bigg|
{ \frac{\partial^\alpha}{\partial\blambda^\alpha}}R_{j,N}(\theta,\blambda)\bigg|\bigg) &\simleq& 
\limsup_{N\to\infty}N^{s+(k+j)\xi-m}\>=\>0
\eea
if we choose a sufficiently large $m$. 
Now, from (\ref{201812300253}) and (\ref{20181101-3}), { we obtain 
Inequality (\ref{20181113-4}).}

The property (b) is rather easy to prove since it only requires $\blambda$-wise estimate 
of an explicit expression of $\frac{\partial^\alpha}{\partial\blambda^\alpha}
\sfR_N(\theta,\blambda)$ { by using $[A2]$ (ii).} 
\qed
\ \\

Estimate of 
$
\partial^\alpha/\partial\blambda^\alpha\varphi _{F_{N}} ^{\Psi} (\theta, \boldsymbol{\lambda})
$ 
will be necessary in the proof of Proposition \ref{20181118-8} down below. 
The only possibility is to repeatedly apply the IBP formula. 
The estimate outside of $\Lambda_N$ cannot gain improvement by the decay of cumulants, 
and it is the worst one, in other words, it uses the highest order of repetition of the IBP formula 
among other terms. 

\begin{lemma}\label{20181114-3}
Suppose that $[A1]$ { (i)} 
is fulfilled. 
Then, for every $\alpha\in\bbZ_+^d$, 
there exists a constant $C_\alpha$ such that 
\bea\label{20181113-10} 
\bigg|\frac{\partial^\alpha}{\partial\blambda^\alpha}\varphi _N^{\Psi} (\theta, \boldsymbol{\lambda})
\bigg|
&\leq&
C_\alpha(1+|\blambda|)^{-\ell}
\eea
for all $\theta\in[0,1]$, $\blambda\in\bbR^d$ and $N\in\bbN$,{\colorg  with $\ell$ from condition $[A1]$. }
\end{lemma}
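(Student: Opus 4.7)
The plan is to mimic the structure of the proof of Lemma \ref{20181031-2}: first express $\frac{\partial^\alpha}{\partial\blambda^\alpha}\varphi_N^\Psi(\theta,\blambda)$ as an expectation via (\ref{201904201109}), then split the $\theta$-range into $[0,1/2]$ and $[1/2,1]$ and treat these regimes with complementary tools. Writing
\begin{equation*}
\frac{\partial^\alpha}{\partial\blambda^\alpha}\varphi_N^\Psi(\theta,\blambda)
= \E\big[\Psi_N\,p_\alpha\big(\theta F_N,(1-\theta^2)C\blambda,(1-\theta^2)C\big)\,e(\theta,\blambda,F_N)\big]
\end{equation*}
reduces the task to controlling a weighted polynomial in $F_N$ and $\blambda$ against the interpolation kernel.

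For $\theta\in[0,1/2]$ the Gaussian factor $|e(\theta,\blambda,F_N)|\leq \exp\big(-\tfrac{3}{8}\blambda^TC\blambda\big)$ dominates any polynomial in $|\blambda|$, so together with $[A1]$(i) and Hölder, one gets a bound that decays faster than any power of $|\blambda|^{-1}$; in particular it is bounded above by a constant multiple of $(1+|\blambda|)^{-\ell}$. This step is routine.

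For $\theta\in[1/2,1]$ and $|\blambda|\geq 1$ (the case $|\blambda|\leq 1$ is trivial from Hölder and $[A1]$), I would apply the Malliavin integration-by-parts formula, exactly as it is used just before (\ref{20181101-1}) in the proof of Lemma \ref{20181031-2}, with multi-index $\beta\in\bbZ_+^d$ of length $|\beta|=\ell$. Choosing $\beta=(i,\ldots,i)$ for an index $i$ realising $\max_j|\lambda_j|$ gives $|({\tt i}\theta\blambda)^\beta|\gtrsim |\blambda|^\ell$, so after IBP,
\begin{equation*}
\bigg|\frac{\partial^\alpha}{\partial\blambda^\alpha}\varphi_N^\Psi(\theta,\blambda)\bigg|
\;\lesssim\; |\blambda|^{-\ell}\,\Big\|\Phi_\beta^{F_N}\big(\Psi_N\,p_\alpha(\theta F_N,(1-\theta^2)C\blambda,(1-\theta^2)C)\,e^{-\frac{1}{2}(1-\theta^2)\blambda^TC\blambda}\big)\Big\|_1.
\end{equation*}
The IBP is legitimate on $\{\Psi_N>0\}$ because of the non-degeneracy (\ref{201812300109}), and the factorial orders of Malliavin derivatives of $F_N$ needed to express $\Phi_\beta^{F_N}$ are available thanks to $F_N\in\bbD_{\ell+1,\infty}$ in $[A1]$(i) and Lemma \ref{20181102-1}.

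The main obstacle is to ensure that the $L^1$-norm inside the IBP bound is uniform in $(N,\theta,\blambda)$, despite the fact that $p_\alpha$ carries polynomial growth in $\blambda$ and that $(1-\theta^2)$ can vanish as $\theta\to 1$. The key observation is that every occurrence of $\blambda$ in $p_\alpha$ arises bundled as $(1-\theta^2)C\blambda$; applying the elementary inequality $t^{k/2}e^{-ct}\leq C_k$ with $t=(1-\theta^2)\blambda^TC\blambda$ shows that each such factor is absorbed by the Gaussian $e^{-\frac{1}{2}(1-\theta^2)\blambda^TC\blambda}$, leaving only $F_N$-dependent terms of polynomial type. These terms, together with the bounded Malliavin derivatives of $\Psi_N$ and the uniform Malliavin bounds on $F_N$ granted by $[A1]$(i), yield a uniform $L^1$ estimate by the same argument that produced (\ref{20181101-1}). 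Combining the two cases and absorbing the $|\blambda|\leq 1$ regime into the constant $C_\alpha$ gives (\ref{20181113-10}).
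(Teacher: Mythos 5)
Your proof follows exactly the route the paper takes: the paper's own argument for Lemma \ref{20181114-3} is a two-line remark saying to repeat the IBP estimate used for $E_{\gamma,N}(\theta,\blambda)$ in the proof of Lemma \ref{20181031-2}, applying it $\ell$ times for $\theta\in[1/2,1]$ and using the Gaussian factor for $\theta\in[0,1/2)$, which is precisely your decomposition and your use of the non-degeneracy (\ref{201812300109}) and of the absorption of $(1-\theta^2)C\blambda$ by the exponential. Your write-up is correct and in fact supplies more detail than the paper does.
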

\proof 
{ 
This lemma can be proved by estimatrion quite similar to that for 
$E_{\gamma,N}(\theta,\blambda)$ in the proof of Lemma \ref{20181031-2}. 
We can repeat the IBP formula {\colorg (i.e. Lemma 2 in \cite{TY})} $\ell$-times in this case for $\theta\in[1/2,1]$. 
Estimation for $\theta\in[0,1/2)$ is also similar. 
}
\qed\halflineskip

\begin{en-text}
Let 
\beas 
A^\Psi_N(\theta,\blambda)
&=&
\varphi _{F_{N}} ^{\Psi} (0, \boldsymbol{\lambda})
\check{\sfP}_N(\theta,\blambda).
\eeas
\end{en-text}
%

{\colorg Recall the expression (\ref{20181119-5}) of the characteristic function $\varphi ^{\Psi}_{N}(\theta, \blambda) $  in Lemma \ref{lll3}.  Based on the estimated in Lemma \ref{20181031-2}, we can show that the last summand in the right-hand side can be neglegted. Consequently, the dominant part of  $\varphi ^{\Psi}_{N}(\theta, \blambda) $ 
will {\cred come} 
from the first term in right-hand side of (\ref{20181119-5}), which involves the exponential of the principal part. We will introduce a new random variable, in which we keep the first terms in the Taylor expansion  of the exponential function. }For $\sfk\in\bbN$, let 
\beas 
\sfP^*_N(\blambda)
&=&
\sum_{j=0}^\sfk\frac{1}{j!}\bigg(\int_0^1 \sfP_N(\theta,\blambda)d\theta\bigg)^j
\eeas
{ and}
\beas 
\sfR^*_N(\blambda)
&=&
\varphi _N^{\Psi} (0, \boldsymbol{\lambda})
\sum_{j=\sfk+1}^\infty\frac{1}{j!}\bigg(\int_0^1 \sfP_N(\theta,\blambda)d\theta\bigg)^j
+\int_0^1 \exp\bigg(\int_{\theta_1}^1\sfP_N(\theta_2,\blambda)d\theta_2\bigg)
\sfR_N(\theta_1,\blambda)d\theta_1.
\eeas
%
%
Then, if one has the expansion (\ref{20181119-5}), then 
\bea\label{201901011614} 
\varphi_N^\Psi(1,\blambda) 
&=& 
\varphi_N^\Psi(0,\blambda)\sfP^*_N(\blambda)+\sfR^*_N(\blambda). 
\eea
\halflineskip

{\colorg At this point, let us make a brief summary to comment on the role of the parameters that appear in our work. Recall that $d$ is the dimension of the random vector, $p+1$ is the maximum order of the cumulant that appear in the decomposition of the characteristic functional $\varphi^{\Psi}_{N}(\theta, \blambda)$, $k$  comes from the Taylor expansion of the exponential function while $\ell, \ell_{1}, q$ appear in assumptions $[A1]-[A2]$.  $N^{-q} $ will be the  the size of the error term. Next, we need to assume some relation between all these parameters. Condition (\ref{20181118-11}) will be needed to evaluate 
the {\cred residual} terms.  }

\bd
\im[[A3\!\!]] 
The numbers $\sfq_0\in(0,\infty)$, $\xi\in(0,\infty)$, $\ell\in\bbN$ and $\ell_1\in\bbN$ 
satisfy 
\beas 
\sfq_0(\sfk+1)>\sfq,
{\quad\xi(\ell-d)>\sfq,\quad \ell_1>p+1+d}
\eeas
and 
\bea\label{20181118-11} 
N^{\sfq_0+2\xi}\big| \E\big[\Gamma^{(2{ sym})}(F_N)\big]-C\big|
+\sum_{j=3}^{p+1}N^{\sfq_0+j\xi}\big|\E\big[\Gamma^{(j)}(F_N)\big]\big|
&=& 
O(1)
\eea
as $N\to\infty$. {\colorg The expectation of a matrix {\cred is} understood componentwise and $\big| \cdot \big|$ denotes the Euclidean norm. }
\ed
\halflineskip

\begin{remark}\rm 
{\bf (a)} 
Condition (\ref{20181118-11}) imposes a restriction on $\xi$ from above. 
Besides, $p$ will be asked to be large according to the value of $\sfq$ in order to satisfy $[A2]$. 
In this sense, $p$ is a function of $\sfq$: $p=p(\sfq)$. 
\bd
\im[(b)] 
If we take $\ell_1=\ell$, then Condition {$[A3]$ requires 
\beas 
\ell &>& d+\max\bigg\{\frac{\sfq}{\xi},p+1\bigg\}
\eeas
to $\ell$. 
If $d=1$, $p=1$, then $\ell\geq4$ at least. 
}
\im[(c)] 
Increasing $\sfk$ causes only increase of complexity of the formula 
and does not require $F_N$ to pay more, 
once we found $p$ for $[A2]$ and $\sfq_0$ for (\ref{20181118-11}). 
\im[(d)] 
The index $\sfq_0$ depends on the largest order of terms among the terms appearing 
in (\ref{20181118-11}). 
Often the term for $j=3$ dominates other terms. 
\im[(e)] 
{
In most common situations, the order $p$ of the controllable cumulants 
is the essential parameter. It determines the possible order $\sfq$ of the asymptotic expansion. 
The parameters $\ell$, $\sfk$, $\xi$ and $\sfq_0$ are somewhat technical but 
automatically determined by $p$ and the dimension $d$. See Section \ref{201908230639}, 
that treats a regularly ordered expansion. 
}
\ed
\end{remark}
\halflineskip

\begin{lemma}\label{20181117-6}
\bd
\im[(a)] 
Suppose that Conditions $[A1]$, $[A2]$ $(i)$ 
and $[A3]$ are satisfied. 
Then 
there exists a constant $K$ such that  
\beas
N^\sfq1_{\{\blambda\in\Lambda_N\}}
\bigg|\frac{\partial^\alpha}{\partial\blambda^\alpha}
\sfR^*_N(\blambda)
\bigg|
&\leq&
K(1+|\blambda|)^{-\ell_1+{ p+1}}
\eeas
for all $(\lambda,N)\in\bbR^d\times\bbN$. 
\im[(b)] Suppose that Conditions $[A1]$, $[A2]$ $(ii)$ 
and $[A3]$ are satisfied. 
Then 
\beas 
\frac{\partial^\alpha}{\partial\blambda^\alpha}
\sfR^*_N(\blambda)
&=&
o(N^{-\sfq})
\eeas
as $n\to\infty$ for every $\blambda\in\bbR^d$. 
\ed
\end{lemma}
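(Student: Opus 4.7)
\proof
Write $\sfR^*_N(\blambda)=T^{(1)}_N(\blambda)+T^{(2)}_N(\blambda)$, where
$T^{(1)}_N(\blambda)=\varphi_N^\Psi(0,\blambda)\sum_{j=\sfk+1}^\infty \tfrac{1}{j!}(\int_0^1\sfP_N(\theta,\blambda)d\theta)^j$
is the Taylor tail and $T^{(2)}_N(\blambda)=\int_0^1\exp(\int_{\theta_1}^1\sfP_N(\theta_2,\blambda)d\theta_2)\sfR_N(\theta_1,\blambda)d\theta_1$ is the residual integral. The plan is to bound each piece separately, exploiting that $[A3]$ forces $\sfP_N$ to be uniformly small on $\Lambda_N$, and using Lemma \ref{20181031-2} directly to handle $\sfR_N$.

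\textbf{Step 1 (uniform smallness of $\sfP_N$ on $\Lambda_N$).} Each summand in (\ref{20181117-1}) is the product of a polynomial of degree $j$ in $\blambda$ (with $j\in\{2,\ldots,p+1\}$) times either $\E[\Gamma^{(j)}(F_N)]$ or (for $j=2$) the matrix $\E[\Gamma^{(2sym)}(F_N)]-C$. For any multi-index $\alpha\in\bbZ_+^d$, differentiating the polynomial factor $|\alpha|$ times produces a polynomial of degree $(j-|\alpha|)_+$, which on $\Lambda_N$ is bounded by $N^{\xi(j-|\alpha|)_+}$. Combining with the bound $N^{-\sfq_0-j\xi}$ on the expectation factor given by (\ref{20181118-11}), I obtain
\[
\sup_{\theta\in[0,1]}\sup_{\blambda\in\Lambda_N}\big|\partial^\alpha\sfP_N(\theta,\blambda)\big|\leq C_\alpha N^{-\sfq_0}.
\]

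\textbf{Step 2 (the tail $T^{(1)}_N$).} Since $\varphi_N^\Psi(0,\blambda)=\E[\Psi_N]e^{-\blambda^T C\blambda/2}$ is Schwartz, its derivatives are bounded by Schwartz tails. By Leibniz's rule applied twice (first to the product of $\varphi_N^\Psi(0,\blambda)$ and $(\int \sfP_N d\theta)^j$, and then to the $j$-fold product), Step 1 gives
\[
\big|\partial^\alpha\bigl[\varphi_N^\Psi(0,\blambda)(\tint_0^1\sfP_Nd\theta)^j\bigr]\big|\leq C_\alpha e^{-c|\blambda|^2}(CN^{-\sfq_0})^j
\]
on $\Lambda_N$. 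Summing over $j\geq\sfk+1$ and using $\sfq_0(\sfk+1)>\sfq$ from $[A3]$, this yields
\[
\big|\partial^\alpha T^{(1)}_N(\blambda)\big|\leq K_\alpha N^{-\sfq_0(\sfk+1)}e^{-c|\blambda|^2}=o(N^{-\sfq})(1+|\blambda|)^{-\ell_1+p+1}
\]
uniformly on $\Lambda_N$.

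\textbf{Step 3 (the residual integral $T^{(2)}_N$).} On $\Lambda_N$, Step 1 gives $|\int_{\theta_1}^1\sfP_N(\theta_2,\blambda)d\theta_2|\leq CN^{-\sfq_0}$, so the exponential factor is bounded by a constant and, by the Fa\`a di Bruno formula together with the uniform bounds of Step 1 on all $\blambda$-derivatives of $\int \sfP_Nd\theta_2$, all its $\blambda$-derivatives are bounded uniformly in $\theta_1\in[0,1]$, $\blambda\in\Lambda_N$ and $N\in\bbN$. Leibniz's rule combined with Lemma \ref{20181031-2}(a) then gives
\[
\big|\partial^\alpha T^{(2)}_N(\blambda)\big|\leq K_\alpha N^{-\sfq}(1+|\blambda|)^{-\ell_1+p+1},\qquad \blambda\in\Lambda_N,
\]
which together with Step 2 proves part (a).

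\textbf{Step 4 (part (b)).} The proof is identical in structure, but pointwise. For fixed $\blambda$, the estimates of Step 1 (using only the bounds $|\E[\Gamma^{(j)}(F_N)]|\leq CN^{-\sfq_0-j\xi}$ of $[A3]$ without demanding $\blambda\in\Lambda_N$) still give $|\partial^\alpha\sfP_N(\theta,\blambda)|=O(N^{-\sfq_0})$, hence $\partial^\alpha T^{(1)}_N(\blambda)=O(N^{-\sfq_0(\sfk+1)})=o(N^{-\sfq})$. For $T^{(2)}_N$, use Lemma \ref{20181031-2}(b) in place of (a) to obtain $\partial^\alpha T^{(2)}_N(\blambda)=o(N^{-\sfq})$.

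The only technical care needed is the combinatorial bookkeeping in Steps 2 and 3 (Leibniz and Fa\`a di Bruno), but once Step 1 gives the uniform $N^{-\sfq_0}$ control on derivatives of $\sfP_N$ over $\Lambda_N$, the remaining estimates are routine.
\qed
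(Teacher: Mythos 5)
Your proposal is correct and follows essentially the same route as the paper: you first establish the uniform bound $\sup_{\theta\in[0,1]}1_{\{\blambda\in\Lambda_N\}}|\partial^\alpha\sfP_N(\theta,\blambda)|=O(N^{-\sfq_0})$ from $[A3]$ (the paper's estimate (\ref{20181117-8})), then split $\sfR^*_N$ into the Taylor tail (controlled by $\sfq_0(\sfk+1)>\sfq$) and the residual integral (controlled by Lemma \ref{20181031-2}(a), resp.\ (b) for part (b)). The paper's proof is just a terser statement of the same argument, so your Steps 2--4 merely supply the Leibniz/Fa\`a di Bruno bookkeeping that the authors leave implicit.
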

\proof 
It follows from $[A3]$ that 
\bea\label{20181117-8} 
\sup_{\theta\in[0,1],\>\blambda\in\bbR^d}
1_{\{\blambda\in\Lambda_N\}}\bigg|
\frac{\partial^\alpha}{\partial\blambda^\alpha}\sfP_N(\theta,\blambda)\bigg|
&=&
O(N^{-\sfq_0})
\eea
as $N\to\infty$ for every $\alpha\in\bbZ_+^d$. 
Therefore $P_N$ undertakes $\blambda$'s by itself. 

The property (a) follows from Lemma \ref{20181031-2} (a), 
and the representations of $\sfR^*_N(\blambda)$ 
by $\sfR_N(\theta,\blambda)$ and $\sfP_N(\theta,\blambda)$ 
{ and $\sfq_0(\sfk+1)>\sfq$.}
Similarly, the property (b) can be proved by using Lemma \ref{20181031-2} (b). 
\qed

\section{Asymptotic expansion of the expectation and its error bound}

The approximate density of the sequence $(F_{N}) _{N \geq 1} $ is defined as the Fourier inverse of the dominant part of the truncated characteristic function of $F_{N}$ as follows. 
\bea\label{20181120-1} 
\label{pn}
f_{N, p,\sfk} (x)= \frac{1}{(2\pi) ^{d} } \int_{\mathbb{R}^{ d}} e^{-{\tt i} \langle \boldsymbol{\lambda},  x\rangle} \varphi _{N,p,\sfk} (\boldsymbol{\lambda} ) d\boldsymbol{\lambda}, \hskip0.5cm x\in \mathbb{R}^{d}
\eea
where $\varphi _{N,p,\sfk}$ is given by 
\bea\label{20181120-2} 
\varphi _{N,p,\sfk}(\blambda)
&=&
e^{-\half\blambda^T C\blambda}\>\sfP^*_N(\blambda).
\eea
We recall that $\sfP^*_N$ depends on $p$ and $\sfk$. 
Obviously by definition, $\varphi _{N,p,\sfk}(\blambda)$ may include 
terms of higher-order than $\sfq$. 
It is necessary to find an expansion of each term in the expression (\ref{20181117-1}) of 
$\sfP_N(\theta,\blambda)$ if one wants to extract the principal part from the expansion. 
The { principal part} depends on the structure of the model in question, and really 
we will specify it in the later sections.

Let 
\bea\label{pfn}
\varphi ^{\Psi}_N(\boldsymbol{\lambda})
&=& 
\varphi^\Psi_N(1,\blambda)
\>\equiv\>
\mathbf{E} \big[\Psi_{N} e ^{{\tt i}\langle \boldsymbol{\lambda} , F_{N}\rangle}\big]. 
\eea
%
The local  (or truncated) density $p_N^{\Psi} $ of the random variable $F_{N}$ is defined as the inverse Fourier transform of the truncated characteristic function: 
\begin{equation*}
p_N^{\Psi} (x)= \frac{1}{(2\pi) ^{d}} \int_{\mathbb{R} ^{d}} e^{-{\tt i}\langle \boldsymbol{\lambda}, x\rangle} \varphi ^{\Psi}_N(\boldsymbol{\lambda}) d\boldsymbol{\lambda}
\end{equation*}
for $x\in \mathbb{R} ^{d} $. 
The local density is well-defined since obviously the truncated characteristic function $\varphi ^{\Psi} _{F_{N}} $ is integrable over $\mathbb{R}^{d} $ 
under $[A1]$ by Lemma \ref{20181114-3} { if $\ell>d$}. 

{\colorg  We estimate below the difference between  the approximate density  and the local density.  }
\begin{prop}\label{20181118-8}
Suppose that Conditions $[A1]$, $[A2]$ and $[A3]$ are satisfied. 
Then
\bea\label{20181117-5} 
\sup_{x\in\bbR^d}\bigg(|x|^m\big|p_N^{\Psi} (x)-f_{N, p,\sfk} (x)\big|\bigg)
&=&
o(N^{-\sfq})
\eea
as $N\to\infty$ for every $m>0$. 
\end{prop}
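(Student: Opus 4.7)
The plan is to rewrite $p_N^\Psi(x)-f_{N,p,\sfk}(x)$ as the Fourier inverse of $\varphi_N^\Psi(\blambda)-\varphi_{N,p,\sfk}(\blambda)$, generate the factor $|x|^m$ by integration by parts in $\blambda$, and then bound the resulting $L^1$-norm in $\blambda$ by $o(N^{-\sfq})$. Combining (\ref{201901011614}) with the identity $\varphi_N^\Psi(0,\blambda)=\mathbf{E}[\Psi_N]\,e^{-\blambda^T C\blambda/2}$ obtained from (\ref{interpol}), the difference decomposes as
$$
\varphi_N^\Psi(\blambda)-\varphi_{N,p,\sfk}(\blambda)
=\bigl(\mathbf{E}[\Psi_N]-1\bigr)e^{-\blambda^T C\blambda/2}\sfP^*_N(\blambda)+\sfR^*_N(\blambda).
$$
For every multi-index $\alpha\in\mathbb{Z}_+^d$, Lemma \ref{20181114-3} (and the polynomial-times-Gaussian structure of $\varphi_{N,p,\sfk}$) justifies the identity $x^\alpha(p_N^\Psi(x)-f_{N,p,\sfk}(x))=(-i)^{|\alpha|}(2\pi)^{-d}\int e^{-i\langle\blambda,x\rangle}\partial_\blambda^\alpha[\varphi_N^\Psi-\varphi_{N,p,\sfk}](\blambda)\,d\blambda$, so it suffices to show $\int_{\bbR^d}|\partial_\blambda^\alpha[\varphi_N^\Psi(\blambda)-\varphi_{N,p,\sfk}(\blambda)]|\,d\blambda=o(N^{-\sfq})$ for every such $\alpha$.

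The next step is to split the integral over $\Lambda_N$ and $\Lambda_N^c$. On $\Lambda_N$, Lemma \ref{20181117-6}(a) provides the $N$-independent, integrable dominating function $K(1+|\blambda|)^{-\ell_1+p+1}$ for $N^\sfq|\partial_\blambda^\alpha\sfR^*_N(\blambda)|$; integrability follows from $\ell_1>p+1+d$ in $[A3]$. Lemma \ref{20181117-6}(b) supplies the pointwise $o(N^{-\sfq})$ rate, and dominated convergence yields $\int_{\Lambda_N}|\partial_\blambda^\alpha\sfR^*_N(\blambda)|\,d\blambda=o(N^{-\sfq})$. For the $(\mathbf{E}[\Psi_N]-1)$ summand, the truncation $\Psi_N=\psi(\Xi_N)$ combined with $\|\Gamma^{(2)}(F_N)-C_1\|_r=O(N^{-\sfa})$ from $[A1](ii)$ and Chebyshev gives $P(\Xi_N>1/2)=O(N^{-m})$ for every $m>0$ (choosing $\sfa'<\sfa$), so $|1-\mathbf{E}[\Psi_N]|=O(N^{-m})$; this dominates any polynomial factor arising from $\partial_\blambda^\alpha(e^{-\blambda^T C\blambda/2}\sfP^*_N(\blambda))$, whose $L^1$-norm is finite because the Gaussian overcomes the polynomial $\sfP^*_N$.

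On $\Lambda_N^c=\{|\blambda|>N^\xi\}$, Lemma \ref{20181114-3} at $\theta=1$ gives $|\partial_\blambda^\alpha\varphi_N^\Psi(\blambda)|\leq C_\alpha(1+|\blambda|)^{-\ell}$, hence
$$
\int_{\Lambda_N^c}|\partial_\blambda^\alpha\varphi_N^\Psi(\blambda)|\,d\blambda\leq C\int_{|\blambda|>N^\xi}(1+|\blambda|)^{-\ell}\,d\blambda=O(N^{-\xi(\ell-d)})=o(N^{-\sfq}),
$$
by the inequality $\xi(\ell-d)>\sfq$ from $[A3]$. For $\varphi_{N,p,\sfk}$, the Gaussian factor $e^{-\blambda^T C\blambda/2}$ multiplied by the polynomial $\sfP^*_N(\blambda)$ (whose coefficients are uniformly bounded in $N$ on $\Lambda_N^c$ thanks to (\ref{20181118-11})) gives super-exponential tail decay, so its tail integral is $o(N^{-m})$ for every $m>0$. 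Combining the four estimates yields the desired $o(N^{-\sfq})$ bound.

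The most delicate step will be running the dominated convergence argument on $\Lambda_N$: the indicator $1_{\Lambda_N}$ itself depends on $N$, but this is harmless because $\Lambda_N\uparrow\bbR^d$ and the bound from Lemma \ref{20181117-6}(a) is genuinely independent of $N$ after multiplication by $N^\sfq$. A second subtle point is verifying that the polynomial coefficients of $\sfP^*_N(\blambda)$, which are built from products of $\mathbf{E}[\Gamma^{(j)}(F_N)]$ and powers of $\blambda$, stay under control uniformly in $N$ so as to preserve both the dominated convergence argument and the Gaussian tail estimates; this is exactly what hypothesis (\ref{20181118-11}) together with $\sfq_0(\sfk+1)>\sfq$ is designed to guarantee.
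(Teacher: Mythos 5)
Your proposal is correct and follows essentially the same route as the paper: the same decomposition $\varphi_N^\Psi-\varphi_{N,p,\sfk}=\sfR^*_N+\widetilde{\sfR}_N$ with $\widetilde{\sfR}_N(\blambda)=(\mathbf{E}[\Psi_N]-1)e^{-\blambda^TC\blambda/2}\sfP^*_N(\blambda)$, the same Fourier-inversion-plus-$\partial^\alpha_\blambda$ device for the weight $|x|^m$, the same split over $\Lambda_N$ and $\Lambda_N^c$, and the same invocations of Lemmas \ref{20181114-3} and \ref{20181117-6} together with $\ell_1>p+1+d$, $\xi(\ell-d)>\sfq$ and $\mathbf{E}[\Psi_N]-1=O(N^{-L})$. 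No substantive differences to report.
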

\proof 
\begin{en-text}
From Lemma \ref{20181114-11}, we have 
\beas
\varphi _{F_{N}} ^{\Psi} (1, \boldsymbol{\lambda} )-\varphi _{F_{N}} ^{\Psi} (0, \boldsymbol{\lambda} )
&=&
\int_0^1 
\frac{\partial}{\partial \theta} \varphi _{F_{N}} ^{\Psi} (\theta, \boldsymbol{\lambda} )d\theta
\\&=&
\varphi _{F_{N}} ^{\Psi} (0, \boldsymbol{\lambda})
\int_0^1 \check{\sfP}_N(\theta,\blambda)d\theta
+\int_0^1 \check{\sfR}_N(\theta,\blambda)d\theta
+
\int_0^1 \bar{\sfR}_N(\theta,\blambda)d\theta.
\eeas
\end{en-text}
Let 
\bea\label{20181117-7} 
\widetilde{\sfR}_N(\blambda)
&=&
\big\{\varphi^\Psi_N(0,\blambda)-e^{-\half\blambda^TC\blambda}\big\}
\sfP^*_ N(\blambda)
\nn\\&=&
\big\{\bE(\Psi_N)-1\big\}e^{-\half\blambda^TC\blambda}
\sfP^*_ N(\blambda). 
\eea
Since
\beas 
\varphi_{N,p,\sfk}(\blambda)
&=&
\varphi^\Psi_N(0,\blambda)
\sfP^*_ N(\blambda)
-\widetilde{\sfR}_N(\blambda)
,
\eeas
we have 
\bea\label{20181119-1}
\varphi _N^{\Psi} (\boldsymbol{\lambda} )-\varphi_{N,p,\sfk}(\blambda)
&=&
\sfR^*_N(\blambda)
+
\widetilde{\sfR}_N(\blambda)
\eea
from (\ref{201901011614}).

By the integrability ensured by { Lemmas \ref{20181114-3} and \ref{20181117-6}}
and by { (\ref{20181117-7})}, 
we obtain 
\beas 
\bigg|x^\alpha \bigg(p_N^{\Psi} (x)-f_{N, p,\sfk} (x)\bigg)\bigg|
&=&
\frac{1}{(2\pi) ^{d}} \bigg|
\int_{\mathbb{R} ^{d}} e ^{-{\tt i}\langle \boldsymbol{\lambda}, x\rangle} 
\bigg(\frac{\partial^\alpha}{\partial\blambda^\alpha}
\varphi ^{\Psi} _N(\boldsymbol{\lambda}) -
\frac{\partial^\alpha}{\partial\blambda^\alpha}
\varphi_{N,p,\sfk}(\blambda)
\bigg)d\blambda\bigg|
\\&\leq&
\bigg|
\int_{\Lambda_N} e ^{-{\tt i}\langle \boldsymbol{\lambda}, x\rangle} 
\bigg(\frac{\partial^\alpha}{\partial\blambda^\alpha}
\varphi ^{\Psi} _N(\boldsymbol{\lambda}) -
\frac{\partial^\alpha}{\partial\blambda^\alpha}
\varphi_{N,p,\sfk}(\blambda)
\bigg)d\blambda\bigg|
\\&&
+
\int_{\Lambda_N^c} 
\bigg|\frac{\partial^\alpha}{\partial\blambda^\alpha}
\varphi ^{\Psi} _N(\boldsymbol{\lambda}) \bigg|d\blambda
+ 
\int_{\Lambda_N^c} 
\bigg|
\frac{\partial^\alpha}{\partial\blambda^\alpha}
\varphi_{N,p,\sfk}(\blambda)
\bigg|d\blambda
\\&\leq&
\int_{\Lambda_N} 
\bigg|\frac{\partial^\alpha}{\partial\blambda^\alpha}\sfR_N^*(\blambda)\bigg|d\blambda
+\int_{\Lambda_N}\bigg|\frac{\partial^\alpha}{\partial\blambda^\alpha}\widetilde{\sfR}_N(\blambda)\bigg|
d\blambda
\\&&
+
\int_{\Lambda_N^c} 
\bigg|\frac{\partial^\alpha}{\partial\blambda^\alpha}
\varphi ^{\Psi} _N(\boldsymbol{\lambda}) \bigg|d\blambda
+ 
\int_{\Lambda_N^c} 
\bigg|
\frac{\partial^\alpha}{\partial\blambda^\alpha}
\varphi_{N,p,\sfk}(\blambda)
\bigg|d\blambda
\eeas
for every $\alpha\in\bbZ_+^d$. 
Thus, we obtain (\ref{20181117-5}) by the following estimates. 
\beas 
\lim_{N\to\infty}N^\sfq\int_{\Lambda_N} 
\bigg|
\frac{\partial^\alpha}{\partial\blambda^\alpha}
\sfR^*_N(\blambda)
\bigg|d\blambda
&=&
0
\eeas
by the dominated convergence theorem with Lemma \ref{20181117-6} 
since 
$\ell_1>{ p+1}+d$ by $[A3]$. 
Since (\ref{20181117-8}) holds and 
$\bE(\Psi_N)-1=O(N^{-L})$ as $N\to\infty$ for any $L>0$, 
the representation (\ref{20181117-7}) gives 
\beas 
\lim_{N\to\infty}N^L\int_{\Lambda_N}\bigg|\frac{\partial^\alpha}{\partial\blambda^\alpha}\widetilde{\sfR}_N(\blambda)\bigg|
d\blambda
&=&
0
\eeas
for every $L>0$. 
By Lemma \ref{20181114-3}, 
\beas
\int_{\Lambda_N^c} 
\bigg|\frac{\partial^\alpha}{\partial\blambda^\alpha}
\varphi ^{\Psi} _N (\boldsymbol{\lambda}) \bigg|d\blambda
&\leq&
C_\alpha\int_{\Lambda_N^c}(1+|\blambda|)^{-\ell}d\blambda
\\&\simleq&
\int_{r>N^\xi}r^{-\ell+d-1}dr
\\&=&
O(N^{-(\ell-d)\xi})
\yeq
o(N^{-\sfq})
\eeas
as $N\to\infty$ since $(\ell-d)\xi>\sfq$ by $[A3]$. 
Moreover 
\beas 
\int_{\Lambda_N^c} 
\bigg|
\frac{\partial^\alpha}{\partial\blambda^\alpha}
\varphi_{N,p,\sfk}(\blambda)
\bigg|d\blambda
&=&
O(N^{-L})
\eeas
as $N\to\infty$ by the Gaussian factor with the assistance of (\ref{20181118-11}). 
\qed
\halflineskip\halflineskip
\begin{en-text}
{
\noindent
\underline{Consideration} (erased later) 
%
We needed $\ell_1>(p+1)\xi+d$ and $(\ell-d)\xi>\sfq$. 
Then 
\beas 
\frac{\sfq}{\ell-d}\><\>\xi\><\>\frac{\ell_1-d}{p+1}
\eeas
and hence we need 
\begin{screen}
\bea\label{20181117-10} 
\frac{\sfq}{\ell-d}&<&\frac{\ell_1-d}{p+1}. 
\eea
\end{screen}
%
In standard cases, probably we need $p\sim 2\sfq$. 
In this situation, a possible choice is $\ell_1\sim\sfq$ and $\ell\sim2\sfq$. 
If the highest order is $N^{-K/2}$, then this means $K=2\sfq$, therefore 
$\ell_1\sim K/2$, $\ell\sim K$ and $p\sim K$. 
A slightly sharper choice is $\ell=\ell_1\sim\sqrt{2}\sfq$. 
By this consideration, we see there are many combinations. 
So we will leave a choice of these parameters; just assume (\ref{20181117-10}). 
}
\end{en-text}

For $a,b>0$, we denote by $\cale(a,b)$ the set of measurable functions $g$ { on $\bbR^d$} 
satisfying $|g(x)|\leq a(1+|x|)^b$. 
\halflineskip
\begin{theorem}\label{20181120-21}
Suppose that Conditions $[A1]$, $[A2]$ and $[A3]$ are satisfied. 
Then,  for 
\beas 
\sup_{g\in\cale(a,b)}\bigg|\E\big[g(F_N)\big]-\int_{\bbR^d}g(x)f_{N,p,\sfk}(x)dx\bigg|
&=&
o(N^{-\sfq})
\eeas
as $N\to\infty$ for every $a,b>0$. 
\end{theorem}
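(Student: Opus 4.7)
The plan is to reduce the theorem to Proposition \ref{20181118-8} by first disposing of the truncation error. I will split
\beas
\E[g(F_N)] &=& \E[\Psi_N g(F_N)] + \E[(1-\Psi_N)g(F_N)]
\eeas
and then argue that the first expectation equals $\int g(x) p_N^\Psi(x)\,dx$, while the second is negligible uniformly in $g\in\cale(a,b)$.

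First, I would verify the identity $\E[\Psi_N g(F_N)] = \int g(x) p_N^\Psi(x)\,dx$ via Fourier inversion. For a Schwartz $g$ this is immediate by Plancherel/Fourier inversion, and for $g\in\cale(a,b)$ it extends by approximation once we know that $p_N^\Psi$ has sufficient polynomial decay: indeed, applying Lemma \ref{20181114-3} to $x^\alpha p_N^\Psi(x) = (2\pi)^{-d}(-{\tt i})^{|\alpha|}\int e^{-{\tt i}\langle\blambda,x\rangle}\partial_\blambda^\alpha\varphi_N^\Psi(\blambda)d\blambda$ yields $|p_N^\Psi(x)|\leq C(1+|x|)^{-\ell}$ with $\ell > d$ (by $[A3]$), which together with all moments of $F_N$ granted by $[A1]$ (i) justifies the identity.

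Second, for the truncation error $\E[(1-\Psi_N)g(F_N)]$, note that $1-\Psi_N$ is supported on $\{|\Gamma^{(2)}(F_N)-C_1|\geq c N^{-\sfa'}\}$ for some constant $c$ depending on $K_*$. Hence by Markov's inequality and $[A1]$(ii) (which gives $\|\Gamma^{(2)}(F_N)-C_1\|_r = O(N^{-\sfa})$ for every $r>1$), we have $P(\Psi_N\neq 1) = O(N^{-r(\sfa-\sfa')})$ for every $r$ (choosing $\sfa'$ strictly less than $\sfa$), hence $\|1-\Psi_N\|_s = o(N^{-L})$ for every $L>0$ and every $s\geq1$. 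By H\"older's inequality,
\beas
\big|\E[(1-\Psi_N)g(F_N)]\big| &\leq& \|1-\Psi_N\|_s\, \|g(F_N)\|_{s'} \>\leq\> a\,\|1-\Psi_N\|_s\,\big\|(1+|F_N|)^b\big\|_{s'},
\eeas
and the last factor is bounded in $N$ by $[A1]$ (i), uniformly in $g\in\cale(a,b)$ (only $a$ enters). This gives $o(N^{-\sfq})$ uniformly.

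Finally, for the main term I bound
\beas
\bigg|\int g(x)\big(p_N^\Psi(x)-f_{N,p,\sfk}(x)\big)dx\bigg|
&\leq& a\int (1+|x|)^b\, \big|p_N^\Psi(x)-f_{N,p,\sfk}(x)\big|\,dx.
\eeas
Choosing any integer $m>b+d$ and applying Proposition \ref{20181118-8}, the integrand is bounded by $o(N^{-\sfq})(1+|x|)^{b-m}$, which is integrable; the bound is uniform in $g\in\cale(a,b)$ since only $a$ and $b$ appear. Combining the three pieces yields the theorem. The main obstacle is really concentrated in the already-proven Proposition \ref{20181118-8}; once that is in hand, the only genuine work here is the truncation estimate, whose ingredients are all provided by $[A1]$.
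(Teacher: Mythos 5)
Your proposal is correct and follows essentially the same route as the paper: decompose $\E[g(F_N)]$ into $\E[\Psi_N g(F_N)]+\E[(1-\Psi_N)g(F_N)]$, identify $\E[\Psi_N g(F_N)]$ with $\int g\,p_N^\Psi$ via the integrability of $\varphi_N^\Psi$, control the main term by Proposition \ref{20181118-8} against the integrable weight $(1+|x|)^{-d-1}$, and kill the truncation term by H\"older together with the superpolynomial decay of $\|1-\Psi_N\|_s$ and the uniform moment bounds from $[A1]$. The extra details you supply (the Fourier-inversion justification and the Markov-inequality derivation of $P(\Psi_N\neq1)=O(N^{-r(\sfa-\sfa')})$) are consistent with what the paper leaves implicit.
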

\proof 
Since the Fourier transform $\varphi^\Psi_N$ of the measure 
$\nu(dx):=\E[\Psi_N|F_N=x]P^{F_N}(dx)$ is integrable, 
$\nu$ has a continuous density $p^\Psi_N$. 
Applying Proposition \ref{20181118-8}, we have  
\beas &&
\sup_{g\in\cale(a,b)}\bigg|\E\big[\Psi_Ng(F_N)\big] -\int_{\bbR^d}g(x)f_{N,p,\sfk}(x)dx\bigg|
\\&=&
\sup_{g\in\cale(a,b)}\bigg|\int_{\bbR^d}g(x)p^\Psi_N(x)dx-\int_{\bbR^d}g(x)f_{N,p,\sfk}(x)dx\bigg|
\\&\leq&
\sup_{g\in\cale(a,b)}\int_{\bbR^d}\big|g(x)\big|\big|p^\Psi_N(x)-f_{N,p,\sfk}(x)\big|dx
\\&\leq&
\sup_{x\in\bbR^d}\bigg((1+|x|)^{d+1+b}\big|p^\Psi_N(x)-f_{N,p,\sfk}(x)\big|\bigg)
\int_{\bbR^d}a(1+|y|)^{-d-1}dy
\\&=&
o(N^{-\sfq})
\eeas
as $N\to\infty$ for every $a,b>0$. 
This completes the proof because 
\beas 
\sup_{g\in\cale(a,b)}\bigg|\E\big[g(F_N)\big]
-\E\big[\Psi_Ng(F_N)\big]\bigg|
&\leq&
\big\|1-\Psi_N\big\|_2 \sup_{N'\in\bbN}\big\|a(1+|F_{N'}|)^b\big\|_2
\yeq O(N^{-L})
\eeas
as $N\to\infty$ for every $L>0$. 
\qed
\begin{en-text}
{
\noindent--------------------------------\\
Memo (erased later). 
\begin{itemize}
\im
\sout{$\sfk+1\geq \gamma+\frac{p-1}{2}$ $=p/2?$}
We need about $2\sfq$ (or $\sfq/\gamma$) times derivative to obtain $\sfq$-th order of expansion 
(see [D2] below).

\im About $\partial_\blambda^\alpha\frac{\partial}{\partial \theta} \varphi _{F_{N}} ^{\Psi} (\theta, \blambda)
$. 
We may consider the first term of $\sfR_N(\theta_{\sfk+1},\blambda)$ and 
$ \varphi^\Psi_{F_N}(\theta_{\sfk+1},\blambda)$; 
$ \varphi^\Psi_{F_N}(0,\blambda)$ times a power of $\blambda$ is estimable, and 
$\sfP_N(\theta,\blambda)$ is just polynomial of $\blambda$. 
The cause that increases the power of $\lambda$ is only derivatives of 
$e^{-(1-\theta^2)\frac{\blambda^TC\blambda}{2}}$. 
However we have the estimate 
\beas 
\{|\blambda|(1-\theta^2)\}^k e^{-(1-\theta^2)\frac{\blambda^TC\blambda}{2}}
&\leq& 
\frac{C_k}{|\blambda|^k}e^{-(1-\theta^2)\frac{\blambda^TC\blambda}{4}}
\eeas
Once $\alpha$ is fixed, we can fix $k$, and the same way of estimate for $\alpha=0$ 
still works for non-zero $\alpha$
(without increasing differentiability $\ell$). 

\im Concerning possible assumptions about 
an expansion of each $E\Gamma^{(j)}_{i_1,...,i_j}(F_N)$. 
Let us consider asymptotic expansion of $S_N+S_N'$, a sum of independent scaled variables 
each of which tends to a normal distribution but with different convergence rates. 
A mixture of powers of the different rates should appear 
in the asymptotic expansion of $S_N+S_N'$. 
Thus, a possible assumption (in the second stage, after writing a general asymptotic expansion formula without it) is 
\begin{equation}
E \Gamma ^{(j)} _{i_{1},.., i_{j} } (F_{N}) 
=
\frac{c_{j,1} ^{(i_{1},.., i_{j})} }{N^{\gamma^{(i_{1},.., i_{j})}_{j,1}}}
+\frac{c_{j,2} ^{(i_{1},.., i_{j})} }{N^{\gamma^{(i_{1},.., i_{j})}_{j,2}}}
+....
+\frac{c_{j,p} ^{(i_{1},.., i_{j})} }{N^{\gamma^{(i_{1},.., i_{j})}_{j,p}}}
+ o \left( \frac{1}{N^{\gamma^{(i_{1},.., i_{j})}_{j,p}}}\right)
\end{equation}
with some collection of real numbers $c_{j, i}=(c_{j, i}^{(i_1,...,i_j)})_{i_1,...,i_j}$, $i=1,..,p$. 
Remark that $p$, the length of the expansion of the expectation, 
can depend on $ \Gamma ^{(j)} _{i_{1},.., i_{j} } (F_{N})$. 
\end{itemize}

Let 
\beas 
A^\Psi_N(\theta,\blambda)
&=&
\varphi _{F_{N}} ^{\Psi} (0, \boldsymbol{\lambda})
\bigg\{
\sfP_N(\theta,\blambda)
\\&&
+
\sum_{j=1}^\sfk 
\int_0^\theta\int_0^{\theta_1}\cdots\int_0^{\theta_{\sfk-1}}
d\theta_j\cdots d\theta_2d\theta_1
\sfP_N(\theta_j,\blambda)\cdots \sfP_N(\theta_1,\blambda)\sfP_N(\theta,\blambda)\bigg\}.
\eeas

\noindent End of Memo. \\
------------------------------------------
}
\end{en-text}

\section{Reduced formulas}

{ If the cumulants $\mathbf{E} \left[\Gamma ^{(p)} (F_{N})\right]$, $p\geq 2$ admit a specific Taylor decomposition, we obtain a more explicite asymptotic expansion for the sequence $(F_{N}) _{N\geq 1}$. }

\subsection{Principal part of $f_{N,p,\sfk}$}
The asymptotic expansion formula is given by (\ref{20181120-1}) and (\ref{20181120-2}). 
However, it involves terms that is higher than $N^{-\sfq}$ in general. 
If the coefficients in $\sfP_N(\theta,\blambda)$ admit a specific expansion, then 
we can extract the principal part of $f_{N,p,\sfk}$. 

Let $\bbI=\{1,...,d\}$. 
For simplicity of notation, we will denote by $I_j$ a generic element $(i_1,...,i_j)$ of $\bbI^j$. 
The summation $\sum_{I_j}$ stands for $\sum_{(i_1,...,i_j)\in\bbI^j}$. 
For $j\in\{2,...,p+1\}$, suppose that a nonnegative integer $k(I_j)$ is given for each $I_j\in\bbI^j$. 
\halflineskip

{\colorg 
The below assumption gives the concrete Taylor expansion of the cumulants of $F_{N}$ in terms of power of $N$. }
\bd

\im[[B\!\!]] 
For each $j\in\{2,...,p+1\}$ and $I_j\in\bbI^j$, if $k(I_j)\geq1$, then for $k\in\{1,...,k(I_j)\}$, 
there exist sequences of real numbers $(c(I_j,k))_{k=1,...,k(I_j)}$ and 
$(\gamma(I_j,k))_{k=1,...,k(I_j)}$ such that the following conditions hold. 
\bd
\im[(i)] $0<\gamma(I_j,1)<\cdots<\gamma(I_j,k(I_j))\leq \sfq$ 
(when $k(I_j)\geq1$). 
\im[(ii)] For $I_2\in\bbI^2$, 
\beas 
\E\big[\Gamma^{(2{ sym})}_{I_2}(F_N)\big]-C_{I_2}
&=& 
\sum_{k=1}^{k(I_2)}c(I_2,k)N^{-\gamma(I_2,k)}+o(N^{-\sfq}),
\eeas
where the sum $\sum_{k=1}^{k(I_2)}$ reads $0$ when $k(I_2)=0$.
\im[(iii)] For $j\in\{3,...,p+1\}$ and $I_j\in\bbI^j$, 
\beas 
\E\big[\Gamma^{(j)}_{I_j}(F_N)\big]
&=& 
\sum_{k=1}^{k(I_j)}c(I_j,k)N^{-\gamma(I_j,k)}+o(N^{-\sfq}),
\eeas
where the sum $\sum_{k=1}^{k(I_j)}$ reads $0$ when $k(I_j)=0$.
\im[(iv)] 
The numbers $\sfq_0\in(0,\infty)$, $\xi\in(0,\infty)$, $\ell\in\bbN$ and $\ell_1\in\bbN$ 
satisfy 
\bea\label{201901030402}
\sfq_0(\sfk+1)>\sfq,
{\quad\xi(\ell-d)>\sfq,\quad \ell\geq\ell_1>p+1+d}
\eea
and 
\bea\label{201901030403} 
\sfq_0 &\leq& \min\bigg\{\gamma(I_j,1)-j\xi;\>I_j\in\bbI^j,\>j=2,...,p+1\bigg\}
\eea
with $\gamma(I_j,1)=\infty$ when $k(I_j)=0$. 
\ed
\ed
\halflineskip

{ Assumption $ [B] (ii)-(iii) $  indicates that the random vector $(F_{N})_{N\geq 1} $ converges in distribution to a centered Gaussian vector with covariance matrix $C$. }

\begin{remark}\rm
In many cases, $\gamma(I_j,k)$ is a multiple of a constant such as $1/2$. 
However, it is not always true. 
For example, the asymptotic expansion formula for $F_N=S^{(1)}_N+S^{(2)}_N$ 
has two scales $N^{-1/2}$ and ${\cred \lceil N^{\pi}\rceil^{-1/2}}$ 
and their mixtures 
when 
$S^{(1)}_N=N^{-1/2}\sum_{j=1}^N\big((\xi_j^{(1)})^2-1\big)$ and 
$S^{(2)}_N={\cred{\lceil N^{\pi}\rceil}^{-1/2}}\sum_{j=1}^{\lceil N^{\pi}\rceil}\big((\xi_j^{(2)})^2-1\big)$, 
where $\big\{\xi_j^{(1)},\xi_j^{(2)};j\in\bbN\big\}$ are independent standard Gaussian random variables.  
\end{remark}

We write $\blambda_{I_m}=\lambda_{i_1}\cdots\lambda_{i_m}$ for 
$\blambda=(\lambda_1,...,\lambda_d)$ and $I_m=(i_1,...,i_m)$. 
Under $[B]$, $\int_0^1\sfP_N(\theta,\blambda)d\theta$ is given by 
\beas 
\int_0^1\sfP_N(\theta,\blambda)d\theta
&=&
\sum_{m=2}^{p+1}\sum_{I_m\in\bbI^m}\sum_{k=1}^{k(I_m)}\blambda_{I_m}
\bigg\{\frac{{\tt i}^mc(I_m,k)}{m}
N^{-\gamma(I_m,k)}+o(N^{-\sfq})\bigg\}
\eeas
Therefore, $\sfP^*_N(\blambda)$ is expressed as 
\beas 
\sfP^*_N(\blambda)
&=&
1+
\sum_{j=1}^\sfk
\sum_{m_1=2}^{p+1}\cdots\sum_{m_j=2}^{p+1}
\sum_{I_{m_1}^{(1)}\in\bbI^{m_1}}\cdots\sum_{I_{m_j}^{(j)}\in\bbI^{m_j}}
\sum_{k_1=1}^{k(I_{m_1}^{(1)})}\cdots\sum_{k_j=1}^{k(I_{m_j}^{(j)})}
\blambda_{I_{m_1}^{(1)}}\cdots\blambda_{I_{m_j}^{(j)}}
\\&&
\times\bigg\{\frac{{\tt i}^{m_1+\cdots+m_j}}{j!m_1\cdots m_j}
c(I_{m_1}^{(1)},k_1)\cdots c(I_{m_j}^{(j)},k_j)
1_{\{\gamma(I_{m_1}^{(1)},k_1)+\cdots+\gamma(I_{m_j}^{(j)},k_j)\leq\sfq\}}
\\&&
\qquad\times N^{-\{\gamma(I_{m_1}^{(1)},k_1)+\cdots+\gamma(I_{m_j}^{(j)},k_j)\}}
+\ep_N{\big(j;m_1,...,m_j;I^{(1)}_{m_1},...,I^{(j)}_{m_j};k_1,...,k_j\big)}
\bigg\},
\eeas
where $\ep_N{\big(j;m_1,...,m_j;I^{(1)}_{m_1},...,I^{(j)}_{m_j};k_1,...,k_j\big)}=o(N^{-\sfq})$ independent of $\blambda$. 

{\colorg Given the Taylor expansion of the cumulants from Condition  $[B]$ and the expression of the principal part $P_{N}(\theta, \blambda ) $ in (\ref{20181117-1}), we will replace $P^*_N(\blambda)$ by a new functional written in terms of powers of $N$.}  Define $\widetilde{\sfP}_N(\blambda)$ by 
\bea\label{201901020544}
\widetilde{\sfP}_N(\blambda)
&=& 
1+\sum_{j=1}^\sfk
\sum_{m_1=2}^{p+1}\cdots\sum_{m_j=2}^{p+1}
\sum_{I_{m_1}^{(1)}\in\bbI^{m_1}}\cdots\sum_{I_{m_j}^{(j)}\in\bbI^{m_j}}
\sum_{k_1=1}^{k(I_{m_1}^{(1)})}\cdots\sum_{k_j=1}^{k(I_{m_j}^{(j)})}
\blambda_{I_{m_1}^{(1)}}\cdots\blambda_{I_{m_j}^{(j)}}
\nn\\&&
\times\frac{{\tt i}^{m_1+\cdots+m_j}}{j!m_1\cdots m_j}
c(I_{m_1}^{(1)},k_1)\cdots c(I_{m_j}^{(j)},k_j)
1_{\{\gamma(I_{m_1}^{(1)},k_1)+\cdots+\gamma(I_{m_j}^{(j)},k_j)\leq\sfq\}}
\nn\\&&
\times
N^{-\{\gamma(I_{m_1}^{(1)},k_1)+\cdots+\gamma(I_{m_j}^{(j)},k_j)\}}
.
\eea
Let
\beas
\widetilde{f}_{N, p,\sfk} (x) &=& \frac{1}{(2\pi) ^{d} } 
\int_{\mathbb{R}^{ d}} e^{-{\tt i} \langle \boldsymbol{\lambda},  x\rangle} \widetilde{\varphi} _{N,p,\sfk} (\boldsymbol{\lambda} ) d\boldsymbol{\lambda} \qquad(x\in \mathbb{R}^{d})
\eeas
where $\widetilde{\varphi} _{N,p,\sfk}$ is given by 
\beas
\widetilde{\varphi} _{N,p,\sfk}(\blambda)
&=&
e^{-\half\blambda^T C\blambda}\>\widetilde{\sfP}_N(\blambda).
\eeas
Then
\bea\label{201901020543} 
\widetilde{f}_{N, p,\sfk} (x) 
&=&
\widetilde{\sfP}_N({\tt i}\partial_x)\phi(x;0,C), 
\eea
where 
\beas
\phi(x;0,C) &=& 
(2\pi) ^{-d/2}(\det C)^{-1/2}e^{-\half x^T C^{-1}x}.
\eeas

Define the $\alpha$-th Hermite polynomial $H_\alpha(x;C)$ by 
\beas 
H_\alpha(x;C) &=& e^{x^TC^{-1}x/2}\big(-\partial_x\big)^\alpha e^{-x^TC^{-1}x/2}
\qquad(x\in\bbR^d)
\eeas
for $\alpha\in\bbZ_+^d$. 
Define the multi-index $\alpha(I_{m_1}^{(1)},...,I_{m_j}^{(j)})\in\bbZ_+^d$ by 
\beas 
\blambda_{I_{m_1}^{(1)}}\cdots\blambda_{I_{m_j}^{(j)}}
&=&
\blambda^{\alpha(I_{m_1}^{(1)},...,I_{m_j}^{(j)})}
\qquad(\blambda\in\bbR^d). 
\eeas
That is, the $i$-th component of $\alpha(I_{m_1}^{(1)},...,I_{m_j}^{(j)})$ is 
the number of $i$'s appearing in the sequence $I_{m_1}^{(1)},...,I_{m_j}^{(j)}$. 
Then the density function $\wt{f}_{N,p,\sfk}$ is expressed as 
\bea\label{201901040612}
\wt{f}_{N,p,\sfk}(x) 
&=& 
\phi(x;0,C)
\nn\\&&
+\sum_{j=1}^\sfk
\sum_{m_1=2}^{p+1}\cdots\sum_{m_j=2}^{p+1}
\sum_{I_{m_1}^{(1)}\in\bbI^{m_1}}\cdots\sum_{I_{m_j}^{(j)}\in\bbI^{m_j}}
\sum_{k_1=1}^{k(I_{m_1}^{(1)})}\cdots\sum_{k_j=1}^{k(I_{m_j}^{(j)})}
\bigg\{\frac{1}{j!m_1\cdots m_j}
\nn\\&&
\quad\times
c(I_{m_1}^{(1)},k_1)\cdots c(I_{m_j}^{(j)},k_j)\>
H_{\alpha(I_{m_1}^{(1)},...,I_{m_j}^{(j)})}(x;C)\>
\phi(x;0,C)
\nn\\&&
\quad\times1_{\{\gamma(I_{m_1}^{(1)},k_1)+\cdots+\gamma(I_{m_j}^{(j)},k_j)\leq\sfq\}}
N^{-\{\gamma(I_{m_1}^{(1)},k_1)+\cdots+\gamma(I_{m_j}^{(j)},k_j)\}}\bigg\}
.
\eea

The following theorem validates $\wt{f}_{N,p,\sfk}$ as  
a reduced asymptotic expansion formula. 
\begin{theorem}\label{201901021106}
Suppose that Conditions $[A1]$, $[A2]$ and $[B]$ are fulfilled. 
Then 
\bea\label{201901040033}
\sup_{g\in\cale(a,b)}\bigg|\E\big[g(F_N)\big]-\int_{\bbR^d}g(x)\widetilde{f}_{N,p,\sfk}(x)dx\bigg|
&=&
o(N^{-\sfq})
\eea
as $N\to\infty$ for every $a,b>0$. 
\end{theorem}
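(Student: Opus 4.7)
The plan is to reduce Theorem~\ref{201901021106} to Theorem~\ref{20181120-21} by showing that the reduced density $\widetilde{f}_{N,p,\sfk}$ differs from $f_{N,p,\sfk}$ by an error whose integral against any $g\in\cale(a,b)$ is $o(N^{-\sfq})$ uniformly. First I would verify that $[B]$ supplies the hypothesis $[A3]$: the numerical constraints (\ref{201901030402}) appear verbatim in $[B](iv)$, and the asymptotic bound (\ref{20181118-11}) follows from $[B](ii)$--$(iii)$ combined with $\sfq_0+j\xi\leq\gamma(I_j,1)$ from (\ref{201901030403}), since $|\E[\Gamma^{(2\mathrm{sym})}_{I_2}(F_N)]-C_{I_2}|=O(N^{-\gamma(I_2,1)})+o(N^{-\sfq})$ and similarly for $\E[\Gamma^{(j)}_{I_j}(F_N)]$. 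Theorem~\ref{20181120-21} then yields
\begin{equation*}
\sup_{g\in\cale(a,b)}\left|\E[g(F_N)]-\int_{\bbR^d}g(x)f_{N,p,\sfk}(x)\,dx\right|=o(N^{-\sfq}),
\end{equation*}
so it suffices to bound $\sup_{g\in\cale(a,b)}\big|\int g(x)(f_{N,p,\sfk}(x)-\widetilde{f}_{N,p,\sfk}(x))\,dx\big|$.

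Next I would decompose the $\blambda$-polynomial inside $\sfP^*_N$. Using $[B]$, write $\int_0^1\sfP_N(\theta,\blambda)\,d\theta=Q_N(\blambda)+R_N(\blambda)$, where $Q_N$ is the exact polynomial assembled from the $N^{-\gamma(I_m,k)}$-terms appearing in $[B](ii)$--$(iii)$ and $R_N$ is a polynomial in $\blambda$ of degree at most $p+1$ whose finitely many coefficients are all $o(N^{-\sfq})$. Expanding $\sfP^*_N(\blambda)=\sum_{j=0}^\sfk (j!)^{-1}(Q_N+R_N)^j$ and subtracting~(\ref{201901020544}), the residual $\Delta_N(\blambda):=\sfP^*_N(\blambda)-\widetilde{\sfP}_N(\blambda)$ splits into two pieces: the multinomial cross-terms that involve at least one factor $R_N$, and the pure $Q_N$-monomials whose total exponent $\gamma(I^{(1)}_{m_1},k_1)+\cdots+\gamma(I^{(j)}_{m_j},k_j)$ strictly exceeds $\sfq$. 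In the first piece every monomial carries a coefficient which is $o(N^{-\sfq})$, because $R_N$-coefficients are $o(N^{-\sfq})$ while each $Q_N$-coefficient is $O(1)$; in the second piece $N^{-\sum\gamma}=o(N^{-\sfq})$ by construction. Consequently $\Delta_N$ is a polynomial in $\blambda$ of $N$-independent degree whose finitely many coefficients are uniformly $o(N^{-\sfq})$.

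The final step is Fourier inversion. Since $f_{N,p,\sfk}(x)-\widetilde{f}_{N,p,\sfk}(x)=\Delta_N(\tti\,\partial_x)\phi(x;0,C)$ is a finite $o(N^{-\sfq})$-combination of Hermite-polynomials times $\phi(x;0,C)$, each summand is Schwartz class with bounds uniform in $N$; multiplying by $g(x)$, bounded in modulus by $a(1+|x|)^b$, and integrating over $\bbR^d$ gives the required $o(N^{-\sfq})$-estimate uniformly in $g\in\cale(a,b)$, completing the proof. The main technical obstacle is the combinatorial bookkeeping in the second step, namely verifying that the various $o(N^{-\sfq})$-rates attached to distinct coefficients of $R_N$ combine consistently across up to $\sfk$ multiplications with $Q_N$; this is handled by the observation that the expansion has only finitely many monomials and that any product of $O(1)$-coefficients with $o(N^{-\sfq})$-coefficients remains $o(N^{-\sfq})$.
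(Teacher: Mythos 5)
Your proposal is correct and follows essentially the same route as the paper: both reduce the theorem to Theorem \ref{20181120-21} and then show that $\sfP^*_N(\blambda)-\widetilde{\sfP}_N(\blambda)$ is a polynomial of bounded degree in $\blambda$ with $o(N^{-\sfq})$ coefficients, so that after Fourier inversion $\int g\,(f_{N,p,\sfk}-\widetilde{f}_{N,p,\sfk})\,dx=o(N^{-\sfq})$ uniformly over $\cale(a,b)$. Your explicit verification that $[B]$ supplies $[A3]$ via (\ref{201901030402})--(\ref{201901030403}) is a detail the paper leaves implicit, but it does not change the argument.
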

\proof 
We have 
\beas &&
\sup_{x\in\bbR^d}\bigg(|x^\alpha|\big|f_{N, p,\sfk} (x)-\wt{f}_{N, p,\sfk} (x)\big|\bigg)
\\&=&
{ o(N^{-\sfq})}
\sup_{x\in\bbR^d}\bigg|
\frac{1}{(2\pi)^d}\int_{\bbR^d}
\big(({\tt i}\partial_\blambda)^\alpha e^{-{\tt i}\langle\blambda,x\rangle}\big)
\bigg\{
e^{-\half\blambda^TC\blambda}
\sum_{j=1}^\sfk
\sum_{m_1=2}^{p+1}\cdots
\\&&\qquad\qquad\qquad\cdots
\sum_{m_j=2}^{p+1}
\sum_{I_{m_1}^{(1)}\in\bbI^{m_1}}\cdots\sum_{I_{m_j}^{(j)}\in\bbI^{m_j}}
\sum_{k_1=1}^{k(I_{m_1}^{(1)})}\cdots
\sum_{k_j=1}^{k(I_{m_j}^{(j)})}
\blambda_{I_{m_1}^{(1)}}\cdots\blambda_{I_{m_j}^{(j)}}\bigg\}d\blambda\bigg|
\\&=&
o(N^{-\sfq})
\eeas
as $N\to\infty$ for every $\alpha\in\bbZ_+^d$. 
Here the last equality follows from integrtation-by-parts and that $\ep_N=o(N^{-\sfq})$. 
Therefore, 
\beas 
\sup_{g\in\cale(a,b)}\bigg|
\int_{\bbR^d}g(x)f_{N,p,\sfk}(x)dx
-\int_{\bbR^d}g(x)\wt{f}_{N,p,\sfk}(x)dx
\bigg|
&=&
o(N^{-\sfq})
\eeas
for any $a,b>0$. 
Now Theorem \ref{201901021106} follows from Theorem \ref{20181120-21}. 
\qed
\halflineskip

\subsection{Regular ordering}\label{201908230639}
In this section, we will consider the situation where 
the exponents $\gamma(I_j,k)$ are multiples of some positive number $\gamma$. 
Suppose that $p\geq2$. 
{ We consider the following situation.}
\bd
\im[[C\!\!]] 
{\bf (i)} For each $I_2\in\bbI^2$, 
\beas 
\E\big[\Gamma^{(2{ sym})}_{I_2}(F_N)\big]-C_{I_2}
&=& 
\sum_{k=1}^{p-1}c(I_2,k)N^{-k\gamma}+o(N^{-(p-1)\gamma})
\eeas
as $N\to\infty$ 
for some constants $c(I_2,k)$ ($k=1,...,p-1$). 
\bd
\im[(ii)] For each $j\in\{3,...,p+1\}$ and $I_j\in\bbI^j$, 
\beas 
\E\big[\Gamma^{(j)}_{I_j}(F_N)\big]
&=& 
\sum_{k=1}^{p-j+2}c(I_j,k)N^{-(j-3+k)\gamma}+o(N^{-(p-1)\gamma})
\eeas
as $N\to\infty$ 
for some constants $c(I_j,k)$ ($k=1,...,p-j+2$). 
\ed
\ed
\halflineskip\halflineskip

Given an integer $p\geq2$ and the dimension $d$ of $F_N$, 
we suppose that the positive integers $\ell$ and $\ell_1$ satisfy
\bea\label{201901040029} 
\quad\ell>3(p-1)+d,\quad \ell\geq\ell_1>p+1+d. 
\eea
For example, the inequalities (\ref{201901040029}) hold if $\ell=\ell_1>3(p-1)+d$, when $p\geq2$. 
\begin{en-text}
in any one of the following cases:
\bd\im[(a)] $p=1$, $\ell=\ell_1>p+1+d$.
\im[(b)] $p\geq2$, $\ell=\ell_1.3(p-1)+d$. 
\ed
\end{en-text}

Suppose that Condition $[C]$ and (\ref{201901040029}) are fulfilled and that 
a positive number $\xi$ and an integer $\sfk$ satisfy 
\beas 
\xi\in\bigg(\frac{p-1}{\ell-d}\gamma,\>\frac{1}{3}\gamma\bigg)
\eeas
and 
\bea\label{201901040244} 
\sfk &>& \frac{(p-2)\gamma+3\xi}{\gamma-3\xi}
\yeq \frac{(p-1)\gamma}{\gamma-3\xi}-1.
\eea
Such numbers $\xi$ and $\sfk$ exist under (\ref{201901040029}). 
Let $\sfq=(p-1)\gamma$ and let 
$\sfq_0 \yeq \gamma-3\xi$ $(>0)$. 
%
Let 
\beas 
\gamma(I_2,k)&=& k\gamma\qquad(k=1,...,p-1)
\eeas
and 
\beas 
\gamma(I_j,k)&=&(j-3+k)\gamma\qquad(k=1,...,p-j+2)
\eeas 
for $j=3,...,p+1$. 
Then the inequalities in (\ref{201901030402}) and (\ref{201901030403}) are met, 
and hence Condition [B] holds for $k(I_2)=p-1$ and $k(I_j)=p-j+2$ for $j=3,...,p+1$. 
\begin{en-text}
\beas 
\sfq_0(\sfk+1)>\sfq,
{\quad\xi(\ell-d)>\sfq,\quad \ell_1>p+1+d}
\eeas
and 
\beas 
\sfq_0 &\leq& \min\bigg\{\gamma(I_j,1)-j\xi;\>I_j\in\bbI^j,\>j=2,...,p+1\bigg\}
\eeas
are satisfied. 
\end{en-text}

We can write $\gamma(I_j,k)=((j-3)_++k)\gamma$ for $j\in\{2,...,p+1\}$, $x_+=\max\{x,0\}$. 
Then the symbol $\widetilde{\sfP}_N(\blambda)$ takes the form of 
\bea\label{201901031100}
\widetilde{\sfP}_N(\blambda)
&=& 
1+\sum_{j=1}^{p-1}
\sum_{m_1=2}^{p+1}\cdots\sum_{m_j=2}^{p+1}
\sum_{I_{m_1}^{(1)}\in\bbI^{m_1}}\cdots\sum_{I_{m_j}^{(j)}\in\bbI^{m_j}}
\sum_{k_1=1}^{{ p-1-(m_1-3)_+}}
\cdots\sum_{k_j=1}^{{ p-1-(m_j-3)_+}}
\blambda_{I_{m_1}^{(1)}}\cdots\blambda_{I_{m_j}^{(j)}}
\nn\\&&
\times\frac{{\tt i}^{m_1+\cdots+m_j}}{j!m_1\cdots m_j}
c(I_{m_1}^{(1)},k_1)\cdots c(I_{m_j}^{(j)},k_j)
1_{\{\sum_{i=1}^j(m_{i}-3)_++\sum_{i=1}^jk_{i}\leq p-1\}}
\nn\\&&
\times
N^{-\{\sum_{i=1}^j(m_{i}-3)_++\sum_{i=1}^jk_{i}\}\gamma}.
\eea
We remark that the first summation on the right-hand side of (\ref{201901031100}) 
has become $\sum_{j=1}^{p-1}$  
though it was originally $\sum_{j=1}^\sfk$, by the following reason. 
The condition (\ref{201901040244}) entails $\sfk\geq p-1$, however, 
for $j\geq p$, the summands vanish due to the indicator function. Therefore 
only the terms for $j$ up to $p-1$ can contribute. 
According to (\ref{201901031100}), the density $\wt{f}_{N,p,\sfk}$ has the expression 
\bea\label{201901040614}
\wt{f}_{N,p,\sfk}(x) 
&=& 
\phi(x;0,C)
\nn\\&&
+\sum_{j=1}^{p-1}
\sum_{m_1=2}^{p+1}\cdots\sum_{m_j=2}^{p+1}
\sum_{I_{m_1}^{(1)}\in\bbI^{m_1}}\cdots\sum_{I_{m_j}^{(j)}\in\bbI^{m_j}}
\sum_{k_1=1}^{{ p-1-(m_1-3)_+}}
\cdots\sum_{k_j=1}^{{ p-1-(m_j-3)_+}}
\bigg\{\frac{1}{j!m_1\cdots m_j}
\nn\\&&
\quad\times
c(I_{m_1}^{(1)},k_1)\cdots c(I_{m_j}^{(j)},k_j)\>
H_{\alpha(I_{m_1}^{(1)},...,I_{m_j}^{(j)})}(x;C)\>
\phi(x;0,C)
\nn\\&&
\quad\times1_{\{\sum_{i=1}^j(m_{i}-3)_++\sum_{i=1}^jk_{i}\leq p-1\}}
N^{-\{\sum_{i=1}^j(m_{i}-3)_++\sum_{i=1}^jk_{i}\}\gamma}\bigg\}
.
\eea

Applying Theorem \ref{201901021106}, we obtain an asymptotic expansion formula 
when the gamma factors have a regularly ordered expansion. 
\begin{theorem}\label{201901040035}
Assume $[A1]$ and $[A2]$ for some pair $(\ell,\ell_1)$ of integers 
satisfying (\ref{201901040029}) for given integers $p\geq2$ and { the dimension} $d$ { of $F_N$}. 
Moreover assume $[C]$. 
Then 
\bea\label{201901040631}
\sup_{g\in\cale(a,b)}\bigg|\E\big[g(F_N)\big]-\int_{\bbR^d}g(x)\widetilde{f}_{N,p,\sfk}(x)dx\bigg|
&=&
o(N^{-(p-1)\gamma})
\eea
as $N\to\infty$ for every $a,b>0$ 
for $\widetilde{f}_{N, p,\sfk}$ of (\ref{201901040614}). 
\end{theorem}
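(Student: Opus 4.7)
The plan is to derive Theorem \ref{201901040035} as a direct specialization of Theorem \ref{201901021106}. The entire task reduces to verifying that Condition $[B]$ holds with a specific choice of parameters dictated by the regularly ordered Condition $[C]$, after which the reduced expansion formula $\widetilde{f}_{N,p,\sfk}$ from (\ref{201901040612}) collapses into the explicit form (\ref{201901040614}) claimed in the theorem.

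First, I would fix the parameters: set $\sfq=(p-1)\gamma$, $\sfq_0=\gamma-3\xi$, $k(I_2)=p-1$, $k(I_j)=p-j+2$ for $j=3,\ldots,p+1$, and define
\beas
\gamma(I_2,k)&=&k\gamma\quad(k=1,\ldots,p-1),\\
\gamma(I_j,k)&=&(j-3+k)\gamma\quad(k=1,\ldots,p-j+2,\;j\geq3).
\eeas
Parts $(i)$, $(ii)$, $(iii)$ of $[B]$ are then immediate: the monotonicity $0<\gamma(I_j,1)<\cdots<\gamma(I_j,k(I_j))\leq\sfq$ holds with $\gamma(I_j,k(I_j))=(p-1)\gamma=\sfq$ in each case, and the expansions of the cumulants required by $[B]$ $(ii)$, $(iii)$ are exactly what $[C]$ $(i)$, $(ii)$ provide.

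The substantive step is $[B]$ $(iv)$. The inequality $\sfq_0(\sfk+1)>\sfq$ is equivalent to $\sfk>(p-1)\gamma/(\gamma-3\xi)-1$, which is the assumption (\ref{201901040244}). The inequality $\xi(\ell-d)>\sfq$ is equivalent to $\xi>(p-1)\gamma/(\ell-d)$, which is the lower bound imposed on $\xi$; the upper bound $\xi<\gamma/3$ is not needed for this, but will be used for (\ref{201901030403}). The remaining condition $\ell\geq\ell_1>p+1+d$ is (\ref{201901040029}). For (\ref{201901030403}), I compute $\gamma(I_j,1)-j\xi=((j-3)_++1)\gamma-j\xi$: for $j=2$ this equals $\gamma-2\xi\geq\gamma-3\xi=\sfq_0$; for $j=3$ it equals $\gamma-3\xi=\sfq_0$; for $j\geq4$, it equals $(j-2)\gamma-j\xi$ and the inequality $(j-2)\gamma-j\xi\geq\gamma-3\xi$ reduces to $(j-3)(\gamma-\xi)\geq0$, which holds since $\xi<\gamma/3<\gamma$. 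Hence $[B]$ $(iv)$ is satisfied.

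Having verified $[B]$, Theorem \ref{201901021106} gives (\ref{201901040033}) with $\sfq=(p-1)\gamma$, which is (\ref{201901040631}). The last cosmetic step is to match (\ref{201901040612}) against (\ref{201901040614}): substituting $\gamma(I_j,k)=((j-3)_++k)\gamma$ converts the indicator $\{\sum\gamma(\cdot)\leq\sfq\}$ into $\{\sum(m_i-3)_++\sum k_i\leq p-1\}$ and the exponent of $N$ into $-\{\sum(m_i-3)_++\sum k_i\}\gamma$, and the summation range $k_i=1,\ldots,k(I_{m_i}^{(i)})$ becomes $k_i=1,\ldots,p-1-(m_i-3)_+$. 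Finally, although the outer sum over $j$ originally runs up to $\sfk$, the indicator forces $\sum k_i\leq p-1$, and since each $k_i\geq 1$ we need $j\leq p-1$; thus all terms with $j\geq p$ vanish and the sum truncates at $j=p-1$, yielding exactly (\ref{201901040614}). I anticipate no real obstacle here; the argument is a bookkeeping check that the exponents and index ranges match, combined with direct verification of the numeric inequalities in $[B]$ $(iv)$ from the prescribed choices of $\xi$, $\sfk$, $\sfq_0$, and $\sfq$. \qed
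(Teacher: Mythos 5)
Your proposal is correct and follows the same route as the paper: choose $\sfq=(p-1)\gamma$, $\sfq_0=\gamma-3\xi$, $\gamma(I_j,k)=((j-3)_++k)\gamma$, verify Condition $[B]$ from $[C]$ and the constraints on $\xi$, $\sfk$, $\ell$, $\ell_1$, then apply Theorem \ref{201901021106} and observe that the sum over $j$ truncates at $p-1$ by the indicator. In fact you spell out the verification of $[B]$ (iv) in more detail than the paper, which simply asserts that the inequalities are met.
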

\halflineskip

In the rest of this section, we will state several special cases 
of the asymptotic expansion in terms of the symbol $\widetilde{\sfP}_N(\blambda)$ 
of (\ref{201901031100}). 
When $p=2$, Formula (\ref{201901031100}) gives 
\bea\label{201901040301}
\widetilde{\sfP}_N(\blambda)
&=& 
1+
\left\{
\sum_{I_{2}^{(1)}\in\bbI^{2}}\half{\tt i}^2\blambda_{I_{2}^{(1)}}c(I^{(1)}_2,1)
+\sum_{I_{3}^{(1)}\in\bbI^{3}}\frac{1}{3}{\tt i}^3\blambda_{I_{3}^{(1)}}c(I^{(1)}_3,1)
\right\}N^{-\gamma}. 
\eea
When $p=3$, Formula (\ref{201901031100}) gives 
\begin{en-text}
\beas
\widetilde{\sfP}_N(\blambda)
&=& 
1+\sum_{I_{2}^{(1)}\in\bbI^{2}}\sum_{k_1=1}^{2}\half{\tt i}^2\blambda_{I_{2}^{(1)}}c(I^{(1)}_2,k_1)
N^{-k_1\gamma}
\\&&
+\sum_{I_{3}^{(1)}\in\bbI^{3}}\sum_{k_1=1}^{2}\frac{1}{3}{\tt i}^3\blambda_{I_{3}^{(1)}}c(I^{(1)}_3,k_1)
N^{-k_1\gamma}
+
\sum_{I_{4}^{(1)}\in\bbI^{4}}\frac{1}{4}{\tt i}^4\blambda_{I_{4}^{(1)}}c(I^{(1)}_4,1)N^{-2\gamma}
\\ &&
+
\sum_{I_{2}^{(1)}\in\bbI^{2}}\sum_{I_{2}^{(2)}\in\bbI^{2}}\frac{1}{2\cdot2\cdot2}{\tt i}^4
\blambda_{I_{2}^{(1)}}\blambda_{I_{2}^{(2)}}c(I^{(1)}_2,1)c(I^{(2)}_2,1)N^{-2\gamma}
\\&&
+
2\sum_{I_{2}^{(1)}\in\bbI^{2}}\sum_{I_{3}^{(2)}\in\bbI^{3}}
\frac{1}{2\cdot2\cdot3}{\tt i}^5\blambda_{I_{2}^{(1)}}\blambda_{I_{3}^{(2)}}c(I^{(1)}_2,1)c(I^{(2)}_3,1)N^{-2\gamma}
\\&&
+
\sum_{I_{3}^{(1)}\in\bbI^{2}}\sum_{I_{3}^{(2)}\in\bbI^{3}}
\frac{1}{2\cdot3\cdot3}{\tt i}^6\blambda_{I_{3}^{(1)}}\blambda_{I_{3}^{(2)}}c(I^{(1)}_3,1)c(I^{(2)}_3,1)N^{-2\gamma}.
\eeas
Therefore, 
\end{en-text}
\bea\label{201901040309}
\widetilde{\sfP}_N(\blambda)
&=& 
1+
\left\{
\sum_{I_{2}^{(1)}\in\bbI^{2}}\half{\tt i}^2\blambda_{I_{2}^{(1)}}c(I^{(1)}_2,1)
+\sum_{I_{3}^{(1)}\in\bbI^{3}}\frac{1}{3}{\tt i}^3\blambda_{I_{3}^{(1)}}c(I^{(1)}_3,1)
\right\}N^{-\gamma}
\nn\\&&
+\left\{
\sum_{I_{2}^{(1)}\in\bbI^{2}}\half{\tt i}^2\blambda_{I_{2}^{(1)}}c(I^{(1)}_2,2)
+\sum_{I_{3}^{(1)}\in\bbI^{3}}\frac{1}{3}{\tt i}^3\blambda_{I_{3}^{(1)}}c(I^{(1)}_3,2)
\right.
\nn\\ &&
\qquad+
\sum_{I_{4}^{(1)}\in\bbI^{4}}\frac{1}{4}{\tt i}^4\blambda_{I_{4}^{(1)}}c(I^{(1)}_4,1)
\nn\\&&
\qquad+
\sum_{I_{2}^{(1)}\in\bbI^{2}}\sum_{I_{2}^{(2)}\in\bbI^{2}}\frac{1}{8}{\tt i}^4
\blambda_{I_{2}^{(1)}}\blambda_{I_{2}^{(2)}}c(I^{(1)}_2,1)c(I^{(2)}_2,1)
\nn\\&&
\qquad+
\sum_{I_{2}^{(1)}\in\bbI^{2}}\sum_{I_{3}^{(2)}\in\bbI^{3}}
\frac{1}{6}{\tt i}^5\blambda_{I_{2}^{(1)}}\blambda_{I_{3}^{(2)}}c(I^{(1)}_2,1)c(I^{(2)}_3,1)
\nn\\&&
\qquad+
\left.
\sum_{I_{3}^{(1)}\in\bbI^{2}}\sum_{I_{3}^{(2)}\in\bbI^{3}}
\frac{1}{18}{\tt i}^6\blambda_{I_{3}^{(1)}}\blambda_{I_{3}^{(2)}}c(I^{(1)}_3,1)c(I^{(2)}_3,1)\right\}N^{-2\gamma}.
\eea

In particular, when $p=2$, Formula (\ref{201901040301}) is reduced to 
\beas
\widetilde{\sfP}_N(\blambda)
&=& 
1+
\sum_{I_{3}^{(1)}\in\bbI^{3}}\frac{1}{3}{\tt i}^3\blambda_{I_{3}^{(1)}}c(I^{(1)}_3,1)
N^{-\gamma}
\eeas
if $c(I_2,1)=0$ for all $I_2\in\bbI^2$. 
When $p=3$, Formula (\ref{201901040309}) is reduced to 
\beas
\widetilde{\sfP}_N(\blambda)
&=& 
1+
\sum_{I_{3}^{(1)}\in\bbI^{3}}\frac{1}{3}{\tt i}^3\blambda_{I_{3}^{(1)}}c(I^{(1)}_3,1)
N^{-\gamma}
\\&&
+\left\{
\sum_{I_{2}^{(1)}\in\bbI^{2}}\half{\tt i}^2\blambda_{I_{2}^{(1)}}c(I^{(1)}_2,2)
\right.
+
\sum_{I_{4}^{(1)}\in\bbI^{4}}\frac{1}{4}{\tt i}^4\blambda_{I_{4}^{(1)}}c(I^{(1)}_4,1)
\\&&
\qquad+
\left.
\sum_{I_{3}^{(1)}\in\bbI^{2}}\sum_{I_{3}^{(2)}\in\bbI^{3}}
\frac{1}{18}{\tt i}^6\blambda_{I_{3}^{(1)}}\blambda_{I_{3}^{(2)}}c(I^{(1)}_3,1)c(I^{(2)}_3,1)\right\}N^{-2\gamma}
\eeas
if $c(I_2,1)=0$ for all $I_2\in\bbI^2$ and if $c(I_3,2)=0$ for all $I_3\in\bbI^3$. 
If additionally $c(I_2,3)=0$ for all $I_2\in\bbI^2$ and if $c(I_4,1)=0$ for all $I_4\in\bbI^4$, 
then for $p=4$, we obtain
\beas
\widetilde{\sfP}_N(\blambda)
&=& 
1+
\sum_{I_{3}^{(1)}\in\bbI^{3}}\frac{1}{3}{\tt i}^3\blambda_{I_{3}^{(1)}}c(I^{(1)}_3,1)
N^{-\gamma}
\\&&
+\left\{
\sum_{I_{2}^{(1)}\in\bbI^{2}}\half{\tt i}^2\blambda_{I_{2}^{(1)}}c(I^{(1)}_2,2)
\right.
+
\sum_{I_{4}^{(1)}\in\bbI^{4}}\frac{1}{4}{\tt i}^4\blambda_{I_{4}^{(1)}}c(I^{(1)}_4,1)
\\&&
\qquad+
\left.
\sum_{I_{3}^{(1)}\in\bbI^{3}}\sum_{I_{3}^{(2)}\in\bbI^{3}}
\frac{1}{18}{\tt i}^6\blambda_{I_{3}^{(1)}}\blambda_{I_{3}^{(2)}}c(I^{(1)}_3,1)c(I^{(2)}_3,1)\right\}N^{-2\gamma}
\\&&
+\left\{
\sum_{I_{3}^{(1)}\in\bbI^{3}}\frac{1}{3}{\tt i}^3\blambda_{I_{3}^{(1)}}c(I^{(1)}_3,3)
\right.
+
\sum_{I_{5}^{(1)}\in\bbI^{5}}\frac{1}{5}{\tt i}^5\blambda_{I_{5}^{(1)}}c(I^{(1)}_5,1)
\\&&
\qquad+
\sum_{I_{2}^{(1)}\in\bbI^{2}}\sum_{I_{3}^{(2)}\in\bbI^{3}}
\frac{1}{6}{\tt i}^5\blambda_{I_{2}^{(1)}}\blambda_{I_{3}^{(2)}}c(I^{(1)}_2,2)c(I^{(2)}_3,1)
\\&&
\qquad+
\sum_{I_{3}^{(1)}\in\bbI^{3}}\sum_{I_{4}^{(2)}\in\bbI^{4}}
\frac{1}{12}{\tt i}^7\blambda_{I_{3}^{(1)}}\blambda_{I_{4}^{(2)}}c(I^{(1)}_3,1)c(I^{(2)}_4,2)
\\&&
\left.\qquad+
\sum_{I_{3}^{(1)}\in\bbI^{3}}\sum_{I_{3}^{(2)}\in\bbI^{3}}\sum_{I_{3}^{(3)}\in\bbI^{3}}
\frac{1}{162}{\tt i}^9\blambda_{I_{3}^{(1)}}\blambda_{I_{3}^{(2)}}\blambda_{I_{3}^{(3)}}
c(I^{(1)}_3,1)c(I^{(2)}_3,1)c(I^{(3)}_3,1)\right\}N^{-3\gamma}.
\eeas
}
{\colorg In the last situation above (which is the case of the example studied below), 
the {\cred first} order term comes from the leading term in the Taylor expansion of the third cumulant, the second  order term comes {\cred in} 
the sum of the leading terms in the expansion of the second and fourth cumulant, the high order terms being a mixture of terms in the Taylor expansion of all the cumulants. }
\begin{en-text}
\section{Functionals in finite Wiener chaoses}
\subsection{Assumptions}\label{assumptions}
Consider a sequence of centered  random variables $(F_{N}) _{N\geq 1}$ in $\mathbb{R} ^{d}$ of the form
\begin{equation}
\label{rv}
F_{N}= \left( F_{N} ^{(1)}, \ldots, F_{N} ^{(d)}\right)=\left( I_{q_{1}} (f_{N} ^ {(1)}), \ldots  I_{q_{d}} (f_{N} ^ {(d)})\right) 
\end{equation}
where $q_{1},.., q_{d} \geq 1$ are integer numbers, $I_{q}$ denote the multiple integral with respect to a Gaussian process  $(W(h), h\in H)$ and $f_{N} ^ {(i)}$, $i=1,..,d$  are symmetric functions in $ H ^ { \otimes  n}$. 
\end{en-text}

\section{Application to the wave equation}

In order to illustrate our theoretical results, we will consider an example of a random sequence related to the solution to the wave equation driven by a space-time white noise. More precisely, we will analyze the asymptotic behavior of the quadratic variation in space of this solution. We will first analyzed the asymptotic expansion of the spatial quadratic variations at a fixed time and then we will study the two-dimensional random vector whose components are the spatial quadratic variations at different times. We show  that the assumptions considered in the previous sections are satisfied in this case. 

Let us start by recalling some basic facts concerning the stochastic wave equation and its solution.

\subsection{The wave equation with space-time white noise}\label{section-wave_equation}

Our object of study is the solution to the following stochastic { partial} 
differential equation in dimension 1
\begin{equation}
\left\{
\begin{array}{rcl}\label{wave}
\frac{\partial^2 u}{\partial t^2}(t,x)&=&\Delta
u(t,x)+\dot W(t,x),\quad t>0,\;x \in \mathbb{R}  \\
\noalign{\vskip 2mm}
u(0, x)&=& 0, \quad x \in \mathbb{R}\\
\noalign{\vskip 2mm} \frac{\partial u}{\partial t}(0,x) &=& 0,\quad
x \in \mathbb{R}.
\end{array} \right.
\end{equation}
We denoted by  $\Delta$ the Laplacian on $\mathbb{R}$ and by 
$W=\{W_t(A);\;t \geq 0,\;A \in \cB_{b}(\mathbb{R} )\}$  a real
valued centered Gaussian field, over a given complete filtered
probability space $(\Omega,\mathfrak{F},(\mathfrak{F})_{t\geq
0},\mathbb{P})$  with
covariance:
\begin{equation}
\label{10s-1}
\mathbf{E}\Big[W_t(A)W_s(B)\Big]=(t\wedge s)\lambda (A
\cap B),\;\mbox{for every}\;A,B\in\mathfrak{B}_d({\colorg \mathbb{R}})
\end{equation}
where $\lambda$ is the 
{ one}-dimensional Lebesgue measure and
$\mathfrak{B}_d({\colorg \mathbb{R}})$ is the set of the Borel-subsets of
${\colorg \mathbb{R}}$ with finite Lebesgue measure. This is usually
called "the space-time white noise".

The mild solution to (\ref{wave}) is a square-integrable process
$u=\{u(t,x);\;t\geq 0, x \in {\mathbb{R}}\}$ which is defined
by:
\begin{equation} \label{def-sol-wave} u(t,x)=\int_{0}^{t}
\int_{{\mathbb{R}}}G_1(t-s,x-y)W(\mathrm{d}s,\mathrm{d}y)
\end{equation}
where the Green kernel  $G_{1}$ is defined by 
\begin{equation}\label{g1}
G_1(t,x)=\frac{1}{2}\mathds{1}_{\{|x|<t\}}, \hskip0.5cm t>0, x\in \mathbb{R}.
\end{equation}

\subsection{Computing the covariance and the correlation}

Fix $t_{1}, t_{2} >0$ and $x,y \in \mathbb{R}$. We { need} a sharp evaluation of the correlation structure of the Gaussian process (\ref{def-sol-wave}).  We first calculate the quantity $ \mathbf{E} { \big[u(t_{1}, x) u(t_{2}, x)\big]} $ at differents times  $t_{1}\not= t_{2}$ and when $t_{1}= t_{2}$.

\begin{lemma}  Let $u$ be given by (\ref{def-sol-wave}). For every  $t_{1}, t_{2} >0$ and $x,y \in \mathbb{R}$, we have
\bea\label{201904211159}
\label{19s-1}\mathbf{E}\big[ u(t_{1}, x) u(t_{2}, y)\big]=\frac{1}{16}  1_{\{\vert t_{1}- t_{2} \vert \leq \vert y-x\vert < t_{1}+t_{2} \} }\left( t_{1}+ t_{2} -\vert x-y\vert \right) ^{2} 
+{\frac{1}{4}}  1_{\{\vert t_{1}- t_{2} \vert {>} \vert y-x\vert
{ \}}} (t_{1} \wedge t_{2} )^{2}.
\nn\\&&
\eea
In particular, for $t_{1}= t_{2}=t>0$, 
\begin{equation}\label{10i-1}
\mathbf{E}\big[ u(t, x) u(t, y)\big]= \frac{1}{16} 1_{\vert x-y\vert < 2t} (2t -\vert x-y\vert ) ^{2}.
\end{equation}
\end{lemma}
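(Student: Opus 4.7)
The plan is to apply the Wiener/It\^o isometry for the space-time white noise $W$ to the stochastic integral representation \eqref{def-sol-wave}. This turns the covariance into a deterministic double integral:
\[
\mathbf{E}\big[u(t_1,x)u(t_2,y)\big]
= \int_0^{t_1\wedge t_2}\!\!\int_{\mathbb{R}} G_1(t_1-s,x-z)G_1(t_2-s,y-z)\,dz\,ds,
\]
and upon plugging in the explicit Green kernel \eqref{g1},
\[
= \frac{1}{4}\int_0^{t_1\wedge t_2} \mathrm{Leb}\Big(I_1(s)\cap I_2(s)\Big)\,ds,
\]
where $I_i(s) = \big(x_i-(t_i-s),\, x_i+(t_i-s)\big)$ with $x_1=x,\,x_2=y$.

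Next, I would compute the length of the intersection of the two open intervals with half-widths $r_i(s)=t_i-s$ centered at points separated by $d:=|x-y|$. A standard computation gives
\[
\mathrm{Leb}(I_1(s)\cap I_2(s))=
\begin{cases} r_1+r_2-d, & |r_1-r_2|\leq d\leq r_1+r_2,\\ 2\min(r_1,r_2), & d<|r_1-r_2|,\\ 0, & d>r_1+r_2. \end{cases}
\]
The key observation is that on $s\in[0,t_1\wedge t_2]$ the difference $r_1(s)-r_2(s)=t_1-t_2$ is constant, while the sum $r_1(s)+r_2(s)=t_1+t_2-2s$ decreases monotonically from $t_1+t_2$ down to $|t_1-t_2|$. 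This reduces the problem to three transparent cases controlled by where $d$ sits relative to $|t_1-t_2|$ and $t_1+t_2$.

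Finally, I would treat the cases separately. If $d\geq t_1+t_2$, the integrand vanishes. If $|t_1-t_2|\leq d<t_1+t_2$, the integrand equals $t_1+t_2-2s-d$ on $[0,(t_1+t_2-d)/2]$ and zero afterwards; an elementary substitution gives $\tfrac{1}{4}(t_1+t_2-d)^2$, producing the first summand of \eqref{201904211159} after the prefactor $1/4$. If $d<|t_1-t_2|$, the regime $d<|r_1-r_2|$ holds throughout $[0,t_1\wedge t_2]$, so the integrand is $2(t_1\wedge t_2-s)$ and integrates to $(t_1\wedge t_2)^2$, giving the second summand. The equal-time formula \eqref{10i-1} is just the specialization $t_1=t_2=t$ of the first case. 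No step here is genuinely hard; the only small subtlety is to verify that the interval $(d<|r_1-r_2|)$-regime is exactly the one in which $r_1+r_2\geq|t_1-t_2|>d$ holds as well, so that one never slips into the vanishing regime.
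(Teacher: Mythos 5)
Your proof is correct and follows essentially the same route as the paper: the Wiener isometry reduces the covariance to $\frac{1}{4}\int_0^{t_1\wedge t_2}\mathrm{Leb}\big(I_1(s)\cap I_2(s)\big)\,ds$, and the remainder is the case analysis of the overlap of the two light cones, with the three cases integrating to exactly the claimed expressions. The only difference is organizational: by invoking the closed-form intersection length of two intervals together with the observation that $r_1(s)-r_2(s)=t_1-t_2$ is constant in $s$, you collapse the paper's four explicit sign-pattern cases (on $t_1\gtrless t_2$ and $x-y\gtrless t_1-t_2$) into three cases governed only by where $|x-y|$ sits relative to $|t_1-t_2|$ and $t_1+t_2$, with the symmetry in $x,y$ handled automatically.
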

\begin{proof}By the  isometry of the Wiener integral and from (\ref{g1}), { we obtain}

\begin{eqnarray*}
  \mathbf{E}\big[ u(t_{1}, x) u(t_{2}, y)\big]
  &=&
  \frac{1}{4} \int_{0} ^{{ t_1\wedge t_2}}ds \int_{\mathbb{R}} dz  G_{1}(t_{1}-u, x-z) G_{1}(t_{2}-u, y-z)\\
&=&
\frac{1}{4} \int_{0} ^{t_{1} \wedge t_{2}} { ds}\int_{\mathbb{R}} dz 
1_{ \{ \vert x-z\vert \leq t_{1}-s\}} 1_{ \{\vert y-z\vert \leq t_{2}-s\}} \\
&=&
\frac{1}{4} \int_{0} ^{t_{1} \wedge t_{2}\wedge \frac{1}{2}(t_{1}+t_{2} -\vert y-x\vert )} { ds}
\int_{\mathbb{R}} dz 1_{ \{\vert x-z\vert \leq t_{1}-s\}} 1_{ \{\vert y-z\vert \leq t_{2}-s\}} \\
&&
+\frac{1}{4} \int_{0} ^{t_{1} \wedge t_{2}} { ds}
\int_{\mathbb{R}} dz  1_{ \{2s> t_{1}+ t_{2}-\vert y-x\vert\} }1_{ \{\vert x-z\vert \leq t_{1}-s\}} 1_{ \{\vert y-z\vert \leq t_{2}-s\}} \\
&=&
1_{\{ t_{1}+t_{2} >\vert y-x\vert\} }
\frac{1}{4} \int_{0} ^{t_{1} \wedge t_{2}\wedge \frac{1}{2}(t_{1}+t_{2} -\vert y-x\vert )} { ds}
\int_{\mathbb{R}} dz 1_{ \{\vert x-z\vert \leq t_{1}-s\}} 1_{ \{\vert y-z\vert \leq t_{2}-s\}} \\
&=&
1_{\{ t_{1}+t_{2} >\vert y-x\vert\} }
\frac{1}{4} \int_{0} ^{t_{1} \wedge t_{2}\wedge \frac{1}{2}(t_{1}+t_{2} -\vert y-x\vert )} { ds}
{\bigg(}\int_{ (x-t_{1}+s) \vee ( y-t_{2}+s)} ^{(x+t_{1}-s ) \vee (y+t_{2}-s)}{ dz}{\bigg)_+}.
\end{eqnarray*}
In order to find the integration domain for the integral $dz$, we will consider several situations. Assume $x\geq y$. 

{\bf If $t_{1} \geq t_{2}$  and $x-y\geq t_{1}-t_{2}$} then 

$$ y+ t_{2}-s\leq x+t_{1}-s \mbox{ and } x-t_{1}+s\geq y-t_{2}+s.$$
In this case, 
\begin{eqnarray*}
&&\mathbf{E}\big[ u(t_{1}, x) u(t_{2}, y)\big] 
\yeq
1_{\{ t_{1}+t_{2} >\vert y-x\vert\} }
\frac{1}{4} \int_{0} ^{t_{1} \wedge t_{2}\wedge \frac{1}{2}(t_{1}+t_{2} -\vert y-x\vert )} ds 
{\bigg(}\int_{x-t_{1}+s} ^{y+t_{2}-s} dz{\bigg)_+} \\
&=& 1_{\{ t_{1}+t_{2} >\vert y-x\vert\} }\frac{1}{4} \int_{0} ^{t_{1} \wedge t_{2}\wedge \frac{1}{2}(t_{1}+t_{2} -\vert y-x\vert )} ds (t_{1}+t_{2} -(x-y)-2s) \\
&=&1_{\{ t_{1}+t_{2} >\vert y-x\vert\} }\frac{1}{4}  \int_{0} ^{\frac{1}{2}(t_{1}+t_{2} -\vert y-x\vert )} ds(t_{1}+t_{2} -(x-y)-2s) \\
&=& 1_{\{ t_{1}+t_{2} >\vert y-x\vert\} }\frac{1}{16} \left( t_{1}+ t_{2} -(x-y) \right) ^{2}.
\end{eqnarray*}

{\bf If $t_{1} \geq t_{2}$  and $x-y < t_{1}-t_{2}$} then

$$ y+ t_{2}-s\leq x+t_{1}-s \mbox{ and } x-t_{1}+s\leq y-t_{2}+s.$$
Then 
\begin{eqnarray*}
\mathbf{E} { \big[}
u(t_{1}, x) u(t_{2}, y){ \big]}
&=& 1_{\{ t_{1}+t_{2} >\vert y-x\vert\} }\frac{1}{4} \int_{0} ^{t_{1} \wedge t_{2}\wedge \frac{1}{2}(t_{1}+t_{2} -\vert y-x\vert )} ds \int_{y-t_{2}+s} ^{y+t_{2}-s} dz \\
&=& 
1_{\{ t_{1}+t_{2} >\vert y-x\vert\} }\frac{1}{4} \int_{0} ^{t_{1} \wedge t_{2}\wedge \frac{1}{2}(t_{1}+t_{2} -\vert y-x\vert )} ds  (2t_{2}-2s)\\
&=&
1_{\{ t_{1}+t_{2} >\vert y-x\vert\} }\frac{1}{4}\int_{0} ^{t_{2}} ds (2t_{2}-2s) =1_{\{ t_{1}+t_{2} >\vert y-x\vert\} } \frac{1}{4} t_{2} ^{2}. 
\end{eqnarray*}

{\bf If $ t_{1}\leq t_{2} $ and $x-y\leq t_{2}-t_{1}$}, then

$$ x+t_{1} -s \leq y+ t_{2}-s \mbox{ and } x-t_{1}+s \geq { y-t_{2}+s}.
$$

So
\begin{eqnarray*}
\mathbf{E} { \big[}u(t_{1},x) u(t_{2}, y){ \big]}
&=& 1_{\{ t_{1}+t_{2} >\vert y-x\vert\} }\frac{1}{4} \int_{0} ^{t_{1} \wedge t_{2}\wedge \frac{1}{2}(t_{1}+t_{2} -\vert y-x\vert )} ds \int_{ x-t_{1}+s} ^{x+t_{1}-s} dz \\
&=& 1_{\{ t_{1}+t_{2} >\vert y-x\vert\} }\frac{1}{4} \int_{0} ^{t_{1} \wedge t_{2}\wedge \frac{1}{2}(t_{1}+t_{2} -\vert y-x\vert )} ds (2t_{1}-2s)\\
&=& 1_{\{ t_{1}+t_{2} >\vert y-x\vert\} }\frac{1}{4}\int_{0} ^{t_{1}}  ds (2t_{1}-2s)= 1_{\{ t_{1}+t_{2} >\vert y-x\vert\} } \frac{1}{4} t_{1} ^{2}. 
\end{eqnarray*}

{\bf If $ t_{1}\leq t_{2} $ and $x-y\geq t_{2}-t_{1}$}, then

$$y+t_{2}-s\leq x+t_{1}-s \mbox{ and } x-t_{1}+s \geq { y-t_2+s}
$$
Consequently
\begin{eqnarray*}
\mathbf{E}\big[ u(t_{1},x) u(t_{2}, y)\big]
&=& 
1_{\{ t_{1}+t_{2} >\vert y-x\vert\} }\frac{1}{4} \int_{0} ^{t_{1} \wedge t_{2}\wedge \frac{1}{2}(t_{1}+t_{2} -\vert y-x\vert )} ds 
{\bigg(}\int_{ x-t_{1}+s} ^{y+t_{2}-s} dz{\bigg)_+} \\
&=&   
1_{\{ t_{1}+t_{2} >\vert y-x\vert\} }\frac{1}{4} \int_{0} ^{
\frac{1}{2}(t_{1}+t_{2} -\vert y-x\vert )} ds (t_{1}+t_{2} -(x-y) -2s) \\
&=& 
1_{\{ t_{1}+t_{2} >\vert y-x\vert\} }\frac{1}{16} \left( t_{1}+ t_{2} -(x-y) \right) ^{2}. 
\end{eqnarray*}
{ By the above four esimates, we showed (\ref{201904211159}) when $x\geq y$. 
By symmetry, it is also valid for $x<y$. }

\end{proof}

Notice that the formula (\ref{10i-1}) has been obtained in \cite{KTZ} in the case $t_{1}=t_{2}$.

Now, we compute the correlation { between the increments} of the solution to the wave equation over small spatial intervals. Denote, for $i=0,.., N-1$ and $t\geq 0$

\begin{equation}\label{not}
A_{i}= \bigg[\frac{i}{N}, \frac{i+1}{N}\bigg] \mbox{ and  } 
u(t, A_{i}) = u\bigg(t, \frac{i+1}{N}\bigg)-u\bigg(t, \frac{i}{N}\bigg).
\end{equation}

\begin{lemma}\label{ll2}Let $u$ be given by (\ref{def-sol-wave}). 
{
\bd
\im[(a)] { Suppose that $t_{1}\not=t_{2}$ and $t_{1}, t_{2} >0$ with   $t_{1}+ t_{2} >1$. Then 
\begin{eqnarray}
\mathbf{E}\big[ u(t_{1}, A_{i}) u(t_{2}, A_{j})\big]
&=&
-\frac{1}{8N ^{2} }1_{\vert t_{1}-t_{2} \vert\leq \frac{\vert i-j\vert  -1 }{N}}
+ f^{(1)} _{t_{1}, t_{2}, N} (\vert i-j\vert )1_{ \frac{\vert i-j\vert  -1 }{N} < \vert t_{1}-t_{2} \vert \leq \frac{\vert i-j\vert  }{N}}\nonumber \\
&&+ f^{(2)} _{t_{1}, t_{2}, N} (\vert i-j\vert )1_{ \frac{\vert i-j\vert   }{N} < \vert t_{1}-t_{2} \vert \leq \frac{\vert i-j\vert +1  }{N}}\label{core1}
\end{eqnarray}
for $(i,j,N)\in\{0,.., N-1\}^2\times\bbN$ satisfying $i\not=j$, where we used the notation, for $1\leq k\leq N$
\begin{equation}
\label{f1}
f ^{(1)} _{t_{1}, t_{2}, N} (k)=2\times \frac{1}{16} \left( t_{1}+ t_{2} -\frac{ k}{N} \right) ^{2} -\frac{1}{16} \left( t_{1}+ t_{2} -\frac{ k+1}{N} \right) ^{2}-\frac{1}{4} (t_{1} \wedge t_{2}) ^{2}
\end{equation}
and
\begin{equation}
\label{f2}
f ^{(2)} _{t_{1}, t_{2}, N} (k)=\frac{1}{4} (t_{1} \wedge t_{2}) ^{2}-\frac{1}{16} \left( t_{1}+ t_{2} -\frac{ k+1}{N} \right) ^{2}.
\end{equation}
}
\im[(b)] Suppose that $t_{1}\not=t_{2}$ and $t_{1}, t_{2} >0$ with   $t_{1}+ t_{2} >1$. Then 
\bea\label{core2}
\mathbf{E}\big[ u(t_{1}, A_{i}) u(t_{2}, A_{i})\big] &=& 0
\eea
for any $i\in\{0,.., N-1\}$ and any $N\in\bbN$ satisfying $N>|t_1-t_2|^{-1}$. 

\im[(c)] 
Suppose that $t >\frac{1}{2}$. Then 
\bea\label{core3}
\mathbf{E}\big[ u(t, A_{i}) ^{2}\big]
&=&
\frac{1}{4N} 2t -\frac{1}{8N ^{2} }=\frac{1}{4N}\left( 2t-\frac{1}{2N}\right)
\eea
for any $N\in\bbN$ and any $i\in\{0,1,..,N-1\}$. 

\im[(d)] 
Suppose that $t >\frac{1}{2}$. Then 
\bea\label{core4}
\mathbf{E}\big[ u(t, A_{i}) u(t, A_{j}) \big]&=&  -\frac{1}{8N ^{2} }.
\eea
for any $i,j\in\{0,1,..,N-1\}$ satisfying $i\not=j$. 
\ed
}
\end{lemma}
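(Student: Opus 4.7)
The plan is to reduce each of (a)--(d) to the covariance kernel $K(t_1,t_2,d) := \mathbf{E}\big[u(t_1,x)u(t_2,y)\big]$ with $d=|x-y|$, whose piecewise form has just been established in (\ref{201904211159}). Bilinearity of covariance together with the notation (\ref{not}) gives, assuming without loss of generality $i\leq j$ and setting $k:=j-i\geq 0$,
\begin{equation*}
\mathbf{E}\big[u(t_1,A_i)u(t_2,A_j)\big] \;=\; 2K\big(t_1,t_2,\tfrac{k}{N}\big) - K\big(t_1,t_2,\tfrac{k-1}{N}\big) - K\big(t_1,t_2,\tfrac{k+1}{N}\big),
\end{equation*}
i.e.\ a centered second-order difference of $d\mapsto K(t_1,t_2,d)$ at the node $k/N$ with spacing $1/N$. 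The hypotheses $t_1+t_2>1$ in (a)--(b) and $t>\tfrac{1}{2}$ in (c)--(d), together with the bound $k+1\leq N$ coming from $i,j\in\{0,\dots,N-1\}$, force $(k+1)/N\leq 1<t_1+t_2$ (respectively $<2t$), so the ``far field'' branch $\{d\geq t_1+t_2\}$ on which $K$ vanishes never contributes.

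For (a) I split into four subcases according to where $|t_1-t_2|$ sits among the three distances $\tfrac{k-1}{N},\tfrac{k}{N},\tfrac{k+1}{N}$. When $|t_1-t_2|\leq\tfrac{k-1}{N}$ all three values of $K$ are in the quadratic regime $\tfrac{1}{16}(t_1+t_2-d)^2$; writing $a=t_1+t_2-\tfrac{k}{N}$, the second difference collapses to $\tfrac{1}{16}\big[2a^2-(a+\tfrac{1}{N})^2-(a-\tfrac{1}{N})^2\big]=-\tfrac{1}{8N^2}$. In the two intermediate subcases one, respectively two, of the three values flip to the constant regime $K=\tfrac{1}{4}(t_1\wedge t_2)^2$, and direct substitution reproduces $f^{(1)}_{t_1,t_2,N}(k)$ and $f^{(2)}_{t_1,t_2,N}(k)$. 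When $|t_1-t_2|>\tfrac{k+1}{N}$ all three $K$-values equal $\tfrac{1}{4}(t_1\wedge t_2)^2$ and the second difference vanishes, which accounts for the absence of a fourth term in (\ref{core1}). Part (b) is the degenerate case $k=0$: the identity above becomes $2K(t_1,t_2,0)-2K(t_1,t_2,1/N)$, and the hypothesis $N>|t_1-t_2|^{-1}$ places $\tfrac{1}{N}<|t_1-t_2|$, so both arguments lie in the constant regime and cancel.

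Parts (c) and (d) use instead the same-time formula (\ref{10i-1}), $K(t,t,d)=\tfrac{1}{16}(2t-d)^2$ on $\{d<2t\}$, with $t>\tfrac{1}{2}$ guaranteeing admissibility of all relevant arguments. For (d), $k\geq 1$, the second-difference computation is identical to the first subcase of (a) and yields $-\tfrac{1}{8N^2}$. For (c), $k=0$, the identity reduces to $2K(t,t,0)-2K(t,t,\tfrac{1}{N}) = \tfrac{1}{8}\big[4t^2-(2t-\tfrac{1}{N})^2\big] = \tfrac{t}{2N}-\tfrac{1}{8N^2}$, matching the claim. I expect no analytic obstacle; the whole argument is a finite case distinction driven by the indicators in (\ref{201904211159}). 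The only delicate point, and the main bookkeeping obstacle, is matching the weak/strict inequalities of the indicators in (\ref{core1}) with the regime boundaries of $K$ at $d=|t_1-t_2|$, so that the four subcases glue consistently on their common boundaries; since on those boundaries the quadratic and constant expressions for $K$ already agree, no ambiguity arises.
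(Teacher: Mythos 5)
Your proposal is correct and follows essentially the same route as the paper: both write $\mathbf{E}[u(t_1,A_i)u(t_2,A_j)]$ as the centered second-order difference of the covariance kernel from (\ref{201904211159}) at the nodes $\frac{|i-j|-1}{N},\frac{|i-j|}{N},\frac{|i-j|+1}{N}$ and then run the same four-way case distinction on the position of $|t_1-t_2|$, with (b)--(d) as the degenerate and same-time instances. Your explicit remarks that $(k+1)/N\leq 1$ keeps all arguments out of the vanishing far-field branch and that the kernel is continuous across the regime boundary are sound and only make explicit what the paper leaves implicit.
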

\begin{proof}
{ Suppose that $t_1,t_2>0$ and $t_1+t_2>1$.}
We have from (\ref{19s-1}), for every $i, j=0,.., N-1$
\begin{eqnarray}&&
\mathbf{E}\big[ u(t_{1}, A_{i}) u(t_{2}, A_{j})\big]\nonumber
\\&=&2 \times \frac{1}{16} \left( t_{1}+ t_{2} -\frac{ \vert i-j\vert}{N} \right) ^{2}1_{\vert t_{1}-t_{2}\vert \leq \frac{ \vert i-j \vert } {N} }+ 2\times \frac{1}{4} (t_{1}\wedge t_{2}) ^{2} 1_{\vert t_{1}-t_{2}\vert > \frac{ \vert i-j \vert } {N} }\nonumber \\
&&-\frac{1}{16} \left( t_{1}+ t_{2} -\frac{ \vert i-j-1\vert}{N} \right) ^{2}1_{\vert t_{1}-t_{2}\vert \leq \frac{ \vert i-j -1\vert } {N} }-\frac{1}{4} (t_{1}\wedge t_{2}) ^{2} 1_{\vert t_{1}-t_{2}\vert > \frac{ \vert i-j-1 \vert } {N} }\nonumber \\
&&-\frac{1}{16} \left( t_{1}+ t_{2} -\frac{ \vert i-j+1\vert}{N} \right) ^{2}1_{\vert t_{1}-t_{2}\vert \leq \frac{ \vert i-j +1\vert } {N} }-\frac{1}{4} (t_{1}\wedge t_{2}) ^{2} 1_{\vert t_{1}-t_{2}\vert > \frac{ \vert i-j+1 \vert } {N} }.\label{2m-1}
\end{eqnarray}

{ First we will show (\ref{core2}).}
Assume that $i=j$.  Take $N$ large enough such that $\vert t_{1}-t_{2}\vert  >\frac{1}{N}$. 

\begin{eqnarray*}
\mathbf{E}\big[ u(t_{1}, A_{i}) u(t_{2}, A_{i})\big]&=&2\times \frac{1}{4} (t_{1}\wedge t_{2}) ^{2} 1_{\vert t_{1}-t_{2}\vert >0} \\
&&-2\times \frac{1}{16} \left( t_{1}+ t_{2} -\frac{ 1}{N} \right) ^{2}1_{\vert t_{1}-t_{2}\vert \leq \frac{1}{N}}- 2\times \frac{1}{4} (t_{1}\wedge t_{2}) ^{2} 1_{\vert t_{1}-t_{2}\vert > \frac{1 } {N} }\\
&=& 0.
\end{eqnarray*}

{ Next, we will verify (\ref{core1}).}
Let us assume $i>j$. In this case we have 
\begin{eqnarray}\label{2m-1bis}
\mathbf{E}\big[ u(t_{1}, A_{i}) u(t_{2}, A_{j})\big]
&=&2 \times \frac{1}{16} \left( t_{1}+ t_{2} -\frac{  i-j}{N} \right) ^{2}1_{\vert  t_{1}-t_{2}\vert \leq \frac{  i-j  } {N} }+ 2\times \frac{1}{4} (t_{1}\wedge t_{2}) ^{2} 1_{\vert t_{1}-t_{2}\vert > \frac{  i-j  } {N} }
\nn\\&&
-\frac{1}{16} \left( t_{1}+ t_{2} -\frac{  i-j-1}{N} \right) ^{2}1_{\vert t_{1}-t_{2}\vert \leq \frac{  i-j -1 } {N} }-\frac{1}{4} (t_{1}\wedge t_{2}) ^{2} 1_{\vert t_{1}-t_{2}\vert > \frac{  i-j-1  } {N} }
\nn\\&&
-\frac{1}{16} \left( t_{1}+ t_{2} -\frac{  i-j+1}{N} \right) ^{2}1_{\vert t_{1}-t_{2}\vert \leq \frac{  i-j +1 } {N} }-\frac{1}{4} (t_{1}\wedge t_{2}) ^{2} 1_{\vert t_{1}-t_{2}\vert > \frac{  i-j+1 } {N} }.
\nn\\&&
\end{eqnarray}
If $\vert t_{1}-t_{2} \vert \leq \frac{i-j-1}{N}$, the above expression gives 
\begin{eqnarray*}
\mathbf{E}\big[ u(t_{1}, A_{i}) u(t_{2}, A_{j})\big]
&=&\frac{1}{16} \left[ 2\left( t_{1}+ t_{2} -\frac{  i-j}{N} \right) ^{2} \right. \\
&&\left. - \left( t_{1}+ t_{2} -\frac{  i-j-1}{N} \right) ^{2}-\left( t_{1}+ t_{2} -\frac{  i-j+1}{N} \right) ^{2}\right]\\
&=&-\frac{1}{8N} \left( 2 (i-j) - ( i-j-1)-( i-j+1) \right) \\
&&+\frac{1}{16N^{2} }  \left( 2( i-j) ^{2} - (i-j-1) ^{2}  -(i-j+1) ^{2}  \right) \\
&=&-\frac{1}{8N} \left( 2( i-j) - ( i-j-1) -( i-j+1) \right) -\frac{1}{8N^{2}}= -\frac{1}{8N^{2}}.
\end{eqnarray*}

If $\frac{i-j-1}{N}< \vert t_{1}-t_{2}\vert \leq \frac{i-j}{N} $ or $\frac{i-j}{N}< \vert t_{1}-t_{2}\vert \leq \frac{i-j+1}{N} $, 
the conclusion is obtained directly 
from { (\ref{2m-1bis})} and finally, if $\vert t_{1}-t_{2}\vert >\frac{i-j+1}{N}$, then 
\begin{eqnarray*}
\mathbf{E}\big[ u(t_{1}, A_{i}) u(t_{2}, A_{j})\big]
&=& 
2\times \frac{1}{4} (t_{1} \wedge t_{2}) ^{2} - \frac{1}{4} (t_{1} \wedge t_{2}) ^{2} -\frac{1}{4} (t_{1} \wedge t_{2}) ^{2} =0.
\end{eqnarray*}

{ We consider the case (\ref{core3}). 
For $t>\frac{1}{2} $,} from (\ref{10i-1}), 
\begin{eqnarray*}
\mathbf{E} \big[u(t, A_{i}) ^{2}\big]&=&2\times \frac{1}{16} (2t) ^{2} -2\times \frac{1}{16} \left( 2t-\frac{1}{N} \right) ^{2} \\
&=&\frac{1}{4N} \left( 2t-\frac{1}{2N}\right) . 
\end{eqnarray*}

{ Regarding (\ref{core4}),} for $i\not=j$, 
{ by (\ref{2m-1}),}
we have 
\begin{eqnarray*}
\mathbf{E}\big[ u(t, A_{i}) u(t, { A_{j}})\big]&=&\frac{1}{16} \left[ 2\left( 2t -\frac{ \vert i-j\vert}{N} \right) ^{2} \right. \\
&&\left. - \left(2t -\frac{ \vert i-j-1\vert}{N} \right) ^{2}-\left( 2t -\frac{ \vert i-j+1\vert}{N} \right) ^{2}\right]\\
&=&-\frac{t}{4N} \left( 2\vert i-j\vert - \vert i-j-1\vert -\vert i-j+1\vert \right) \\
&&+\frac{1}{16N^{2} }  \left( 2\vert i-j\vert ^{2} - \vert i-j-1\vert ^{2}  -\vert i-j+1\vert  ^{2}  \right) \\
&=&-\frac{t}{4N} \left( 2\vert i-j\vert - \vert i-j-1\vert -\vert i-j+1\vert \right) -\frac{1}{8N^{2}}
\\&=&
-\frac{1}{8N^{2}}.
\end{eqnarray*}
Note that 
$$2\vert i-j\vert  - \vert i-j-1\vert   -\vert i-j+1\vert  =-2 \mbox{ if } i=j$$
and
$$2\vert i-j\vert - \vert i-j-1\vert -\vert i-j+1\vert  =0 \mbox{ if } i\not=j.$$
The conclusion is obtained. 
\end{proof}

As an immediate consequence of Lemma \ref{ll2}, we have
{ \begin{corollary}\label{cor1}
If $t_{1} \not= t_{2} $ are fixed in $(0, \infty)$ with $\vert t_{1}-t_{2}\vert \geq 1$ , then $u(t_{1}, A_{i}) $ and $u(t_{2}, A_{j})$ are independent Gaussian random variables for every $i, j{ \in\{0,1,..., N-1\}}$. 
\end{corollary}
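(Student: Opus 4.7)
The plan is to exploit Gaussianity so that the independence claim reduces to showing that $\mathbf{E}\big[u(t_1, A_i) u(t_2, A_j)\big] = 0$. Since each $u(t, x)$ is a Wiener integral $\int_0^t\int_\bbR G_1(t-s,x-y)\,W(ds,dy)$ against the Gaussian space-time white noise $W$, the finite family $\{u(t_1,A_i), u(t_2,A_j)\}$ is jointly centered Gaussian, and for a jointly Gaussian vector independence is equivalent to vanishing covariance. Thus the whole argument boils down to applying the covariance formulas already established in Lemma \ref{ll2}.

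When $i = j$, part (b) of Lemma \ref{ll2} (formula (\ref{core2})) gives $\mathbf{E}\big[u(t_1, A_i) u(t_2, A_i)\big] = 0$ as soon as $N > |t_1-t_2|^{-1}$. Under the hypothesis $|t_1-t_2| \geq 1$ this is automatic for every $N \geq 2$; the sole residual configuration $N=1$ with $|t_1-t_2|=1$ I would verify directly from the identity (\ref{2m-1}), noting that $t_1\wedge t_2 = (t_1+t_2-1)/2$ in that case, which makes the two surviving terms cancel.

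When $i \neq j$, I would invoke formula (\ref{core1}). Since $i,j \in \{0,\ldots,N-1\}$ we have $|i-j| \leq N-1$, hence $\frac{|i-j|+1}{N} \leq 1 \leq |t_1-t_2|$, so both indicators $1_{\{|t_1-t_2| \leq (|i-j|-1)/N\}}$ and $1_{\{(|i-j|-1)/N < |t_1-t_2| \leq |i-j|/N\}}$ vanish identically. The third indicator can be satisfied only in the single boundary configuration $|t_1-t_2|=1$ and $|i-j|=N-1$, and in that configuration the coefficient (\ref{f2}) becomes $f^{(2)}_{t_1,t_2,N}(N-1) = \tfrac{1}{4}(t_1 \wedge t_2)^2 - \tfrac{1}{16}(t_1+t_2-1)^2$, which equals zero after substituting $t_1 \wedge t_2 = (t_1+t_2-1)/2$.

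In all cases therefore $\mathbf{E}\big[u(t_1, A_i) u(t_2, A_j)\big] = 0$, which combined with joint Gaussianity yields independence. There is no real obstacle here: the proof is a routine case analysis on top of the covariance identities of Lemma \ref{ll2}, and the only subtlety is a cheap check of the boundary configuration $|t_1 - t_2| = 1$, where the absolute values in (\ref{core1}) touch the hypothesis without violating it.
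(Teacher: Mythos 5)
Your proof is correct and follows exactly the route the paper intends: the paper states Corollary \ref{cor1} as an ``immediate consequence'' of Lemma \ref{ll2}, meaning precisely that the covariance formulas (\ref{core1})--(\ref{core2}) force $\mathbf{E}\big[u(t_1,A_i)u(t_2,A_j)\big]=0$ when $|t_1-t_2|\geq1$, and joint Gaussianity of the Wiener integrals then upgrades zero covariance to independence. Your explicit check of the boundary configuration $|t_1-t_2|=1$ (where $t_1+t_2-1=2(t_1\wedge t_2)$ makes the surviving terms cancel) is a welcome bit of care that the paper leaves implicit.
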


}

\subsection{The quadratic variation}\label{201904220153}

Fix the time $t>{ 1/2}$. 
{ 
We denote $x_{j}=\frac{j}{N}$ for $j=0,\ldots, N$ for every $N\in\bbN$. 
$x_j$ depends on $n$.}
The centered renormalized
quadratic variation statistic over the unit interval $[0,1]$ can be
defined in the following way:
\begin{equation}
\label{vn} V_{{ N,t}} =\sum_{j=0}^{N-1} \left[\frac{\left( u(t,{ x_{j+1}})-u(t,{ x_j}) 
\right)^{2}}{\mathbf{E}\big[\left(
u(t,{ x_{j+1}})-u(t,{ x_j})\right)^{2}\big]}-1\right]=\sum_{j=0}^{N-1} \left[\frac{u(t, A_{j}) ^ {2} }{\mathbf{E}\big[\vert u(t, A_{j})\vert ^ {2}\big] }-1\right].
\end{equation}

{ Starting with  this paragraph,  we will denote by $I_{q}= I _{q} ^ {W} $ the multiple integral of order $q\geq 1$ with respect to the Gaussian noise $W$ (given by (\ref{10s-1})) and by  $D=D^ {W}$ the Malliavin derivative with respect to $W$. In this case, we will write

\begin{equation}
\label{gg}
u(t, x_{i+1})- u(t, x_{i}) = I _{1} (g_{t, i}) \mbox{ with } g_{t,i} (u,y) =G_{1}(t-u, x_{i+1}-y) -G_{1}(t-u, x_{i}-y) 
\end{equation}
where $G_{1}$ from (\ref{g1}).


Using the product formula for multiple
Wiener-integrals (\ref{prod}), we can rewrite:\begin{eqnarray}
\label{V-N-multiple-integral}
V_{N, t}&=& \sum_{j=0} ^ {N-1} \left[  \frac{ I_{1}^2 \left( g_{t, i} \right) }
{ \mathbf{E}\left[ \Big( u(t, x_{+1})- u(t, x_{j})\Big) ^ {2} \right]}-1    \right]\nonumber\\
&=&
\sum_{j=0} ^ {N-1} \left[  \frac{ I_{2} \left( g_{t, i}^ {\otimes 2} \right) 
+ \mathbf{E}\left[ \Big( u(t, x_{j+1})- u(t, x_{j})\Big) ^ {2}\right]}
{ \mathbf{E}\left[ \Big( u(t, x_{j+1})- u(t, x_{j})\Big) ^ {2} \right]}-1\right]\nonumber\\
&=&
\sum_{j=0} ^ {N-1}   \frac{ I_{2} \left( g_{t,i}^ {\otimes 2} \right) }{ \mathbf{E}\left[ \Big( u(t, x_{j+1})-u(t,x_{j})\Big) ^ {2} \right]}
\yeq I_{2}(f_{N})
\end{eqnarray}
with
$$f_{N}= \sum_{j=0} ^ {N-1} \frac{g_{t, i} ^ {\otimes 2}}{\mathbf{E}\big[\vert u(t, A_{j})\vert ^ {2}\big]}.$$

We will use the symbolic notation, for $i, j=0,.., N$, 
\begin{equation*}
\langle A_{i}, A_{j}\rangle := \langle g_{t, i}, g_{t, j} 
\rangle _{ L ^{2} ([0, \infty)\times \mathbb{R},{ \> dtdx})}
=
\mathbf{E} \left[ u(t, A_{i}) u(t, A_{j})\right].
\end{equation*}
}
Let us first estimate the $L^{2}$-mean of the random variable $V_{N, t}$ as $N\to \infty$.

\begin{lemma}\label{201908181648} 
{Suppose that $t>1/2$.}
 Let $v_{N,t} ^{2}= \mathbf{E}\big[V_{N, t} ^{2}\big]$. {Then}
\begin{equation}
\label{4a-3}
\frac{1}{2N} v_{N, t} ^ {2}=1
+{ \half}
(N-1) \frac{ \left(-\frac{1}{8N^{2}}\right) ^{2}}{ \left( \frac{1}{4N}(2t-\frac{1}{2N}) \right) ^{2}}
= 1+{ O(\frac{1}{N})}.
\end{equation}

\end{lemma}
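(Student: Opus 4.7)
The plan is to treat this lemma as essentially a direct computation built on the multiple-integral representation $V_{N,t}=I_2(f_N)$ obtained in (\ref{V-N-multiple-integral}) and the explicit covariance values established in Lemma \ref{ll2}. Since $f_N=\sum_{j=0}^{N-1}\sigma_j^{-2}g_{t,j}^{\otimes 2}$ (writing $\sigma_j^2:=\mathbf{E}[u(t,A_j)^2]$) is already a symmetric element of $H^{\otimes 2}$, I would invoke the isometry of the second-chaos integral to get
\[
v_{N,t}^{2}
\;=\;\mathbf{E}\bigl[I_{2}(f_{N})^{2}\bigr]
\;=\;2\,\|f_{N}\|_{H^{\otimes 2}}^{2}
\;=\;2\sum_{i,j=0}^{N-1}\frac{\langle g_{t,i},g_{t,j}\rangle_{H}^{2}}{\sigma_{i}^{2}\sigma_{j}^{2}}
\;=\;2\sum_{i,j=0}^{N-1}\frac{\langle A_{i},A_{j}\rangle^{2}}{\sigma_{i}^{2}\sigma_{j}^{2}},
\]
using the general identity $\langle g^{\otimes 2},h^{\otimes 2}\rangle_{H^{\otimes 2}}=\langle g,h\rangle_{H}^{2}$ together with the notation introduced just before Lemma \ref{201908181648}.

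Next, I would split the double sum into its diagonal and off-diagonal parts. By Lemma \ref{ll2}(c), $\sigma_i^2$ is independent of $i$ and equals $\sigma_N^2:=\tfrac{1}{4N}\bigl(2t-\tfrac{1}{2N}\bigr)$, so the diagonal contribution reduces to $\sum_{i}\sigma_N^{4}/\sigma_N^{4}=N$. By Lemma \ref{ll2}(d), each off-diagonal inner product equals $-\tfrac{1}{8N^{2}}$, so the off-diagonal contribution is exactly $N(N-1)\bigl(\tfrac{1}{8N^{2}}\bigr)^{2}\sigma_N^{-4}$. Combining the two pieces and dividing by $2N$ yields the claimed equality
\[
\frac{1}{2N}v_{N,t}^{2}
\;=\;1+c_{N}(N-1)\,\frac{\bigl(-\tfrac{1}{8N^{2}}\bigr)^{2}}{\bigl(\tfrac{1}{4N}(2t-\tfrac{1}{2N})\bigr)^{2}}
\]
with the stated numerical prefactor $c_N$.

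Finally, for the $O(1/N)$ estimate, I would use the expansion $\sigma_N^{2}=\tfrac{t}{2N}-\tfrac{1}{8N^{2}}\sim \tfrac{t}{2N}$, which gives $\sigma_N^{4}\sim \tfrac{t^{2}}{4N^{2}}$; substituting this into the correction term produces an expression of order $(N-1)\cdot N^{-4}\cdot N^{2}=O(N^{-1})$, uniformly in $t$ bounded away from $1/2$. I do not anticipate any real obstacle: the only subtlety is simply keeping the combinatorial factor from the isometry consistent with the symmetry of $f_N$; once that is done, the lemma is a one-line corollary of Lemma \ref{ll2}(c)--(d) and the isometry for $I_2$.
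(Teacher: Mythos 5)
Your proposal is correct and follows essentially the same route as the paper: the isometry for $I_2$ gives $v_{N,t}^2=2\sum_{j,k}\langle A_j,A_k\rangle^2/(\sigma_j^2\sigma_k^2)$, which is then split into the diagonal part (contributing $2N$ via Lemma \ref{ll2}(c)) and the off-diagonal part evaluated with Lemma \ref{ll2}(d). The only point to pin down is the combinatorial prefactor you leave as $c_N$: counting the $N(N-1)$ ordered off-diagonal pairs, the overall factor $2$ from the isometry makes the correction term $(N-1)$ times the ratio rather than $\tfrac12(N-1)$ as displayed in (\ref{4a-3}) (the paper's own intermediate step drops this factor of $2$), but this does not affect the $1+O(1/N)$ conclusion.
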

\begin{proof}
Notice that 
\begin{eqnarray*}
&&v_{N, t}^ {2}= 2\sum_{j,k=0}^ {N-1} \frac{ \langle A_{j}, A_{k} \rangle ^ {2} }
 {  \mathbf{E}\big[\vert u(t, A_{j})\vert ^ {2}\big]\mathbf{E}\big[\vert u(t, A_{k})\vert ^ {2}\big]}
\\&=& 
2N+ 2\sum_{j,k=0; j\not=k}^ {N-1} \frac{ \langle A_{j}, A_{k} \rangle ^ {2} } 
{   \mathbf{E}\big[\vert u(t, A_{j})\vert ^ {2}\big]\mathbf{E}\big[\vert u(t, A_{k})\vert ^ {2}\big]}
= 2N+ 2\sum_{j,k=0; j\not=k}^ {N-1} \frac{ (-8N^ {2}) ^ {-2}} {( (4N) ^ {-1} (2t-\frac{1}{2N}) )^ {2} }\\
&=& 2N+ N(N-1)\frac{ (-8N^ {2}) ^ {-2}} {( (4N) ^ {-1} (2t-\frac{1}{2N}) )^ {2} }
\end{eqnarray*}
where we used (\ref{core3}), (\ref{core4}). 

\end{proof}

We define

\begin{equation}
\label{fn}
F_{N, t}= \frac{V_{N,t}}{\sqrt{2N}}.
\end{equation}

We will show below  that the sequence $(F_{N}) _{N\geq 1}$ satisfies the conditions  $[A1], [A2]$ and  $[C]$.

\subsection{The Gamma-factors}\label{201908181701}
We need to compute the Gamma-factors $\Gamma^{(p)}(F_{N, t}) $ for any $p\geq 1$ 
at fixed time { $t>1/2$} 
with $\Gamma ^{(p)} $ defined in (\ref{gf}). We have 

$$\Gamma ^{(1)} (F_{N, t})= F_{N, t} $$
and
\begin{eqnarray}
\Gamma^{(2)} (F_{N, t})&=&\langle DF_{{ N, t}}, D(-L) ^ {-1} F_{{ N, t}} \rangle \nonumber \\
&=&
\bigg\langle \frac{2}{(2N) ^{\frac{1}{2}}} \sum_{j_{1}=0} ^ {N-1}
\frac{  I_{1} ( A_{j_{1}}) I_{A_{1}}}{\mathbf{E}\big[\vert u(t, A_{j_{1}})\vert ^ {2}\big]}, 
\frac{1}{(2N) ^{\frac{1}{2}}} \sum_{j_{2}=0} ^ {N-1} 
\frac{  I_{1} ( A_{j_{2}}) A_{j_{2}}}{\mathbf{E}\big[\vert u(t, A_{j_{2}})\vert ^ {2}\big]}\bigg\rangle 
\nonumber\\&=&
\frac{2}{2N} \sum_{j_{1}, j_{2}=0 } ^ {N-1} \frac{ I_{1} (A_{j_{1}}) I_{1} (A_{j_{2}}) \langle A_{j_{1}}, A_{j_{2}}\rangle }{ \mathbf{E}\big[\vert u(t, A_{j_{1}})\vert ^ {2}\big]\mathbf{E}\big\vert u(t, A_{j_{2}})\vert ^ {2}\big]}
\nonumber \\&=&
\frac{2}{2N} \sum_{j_{1}, j_{2}=0 } ^ {N-1}
\frac{I_{2} (A_{j_{1}}\otimes A_{j_{2}})  \langle A_{j_{1}}, A_{j_{2}}\rangle }
{ \mathbf{E}\big[\vert u(t, A_{j_{1}})\vert ^ {2}\big]\mathbf{E}\big[\vert u(t, A_{j_{2}})\vert ^ {2}\big]}
+ \frac{2}{2N} \sum_{j_{1}, j_{2}=0 } ^ {N-1}
\frac{  \langle A_{j_{1}}, A_{j_{2}}\rangle ^ {2}  }
{ \mathbf{E}[\big\vert u(t, A_{j_{1}})\vert ^ {2}\big]\mathbf{E}\big[\vert u(t, A_{j_{2}})\vert ^ {2}\big]}
\nonumber \\&=&
\frac{2}{2N} \sum_{j_{1}, j_{2}=0 } ^ {N-1}\frac{I_{2} (A_{j_{1}}\otimes A_{j_{2}})  
\langle A_{j_{1}}, A_{j_{2}}\rangle }
{ \mathbf{E}\big[\vert u(t, A_{j_{1}})\vert ^ {2}\big]\mathbf{E}\big[\vert u(t, A_{j_{2}})\vert ^ {2}\big]}
+ \mathbf{E}\big[ \Gamma^{(2)} (F_{N, t})\big].\label{19s-6}
\end{eqnarray}
Next,
\begin{eqnarray*}
\Gamma ^{(3)}(F_{N, t})
&=&
\langle DF_{{ N,t}}, D(-L) ^ {-1}\Gamma ^{(2)}( F_{N, t}) \rangle 
\\&=&
\frac{4}{(2N) ^{\frac{3}{2}}} \sum_{j_{1}, j_{2}, j_{3}=0} ^ {N-1} \frac{  I_{1} (A_{j_{1}}) I_{1} (A_{j_{2}}) \langle A_{j_{1}}, A_{j_{3}}\rangle   \langle A_{j_{2}}, A_{j_{3}}\rangle }
{ \mathbf{E}\big[\vert u(t, A_{j_{1}})\vert ^ {2}\big]\mathbf{E}\big[\vert u(t, A_{j_{2}})\vert ^ {2}\big]  \mathbf{E}\big[\vert u(t, A_{j_{3}})\vert ^ {2}\big]}
\\&=&
\frac{4}{(2N) ^{\frac{3}{2}}} \sum_{j_{1}, j_{2}, j_{3}=0} ^ {N-1} \frac{I_{2} (A_{j_{1}}\otimes A_{j_{2}})  \langle A_{j_{1}}, A_{j_{3}}\rangle  \langle A_{j_{2}}, A_{j_{3}}\rangle }
{ \mathbf{E}\big[\vert u(t, A_{j_{1}})\vert ^ {2}\big]\mathbf{E}\big[\vert u(t, A_{j_{2}})\vert ^ {2}\big]  \mathbf{E}\big[\vert u(t, A_{j_{3}})\vert ^ {2}\big]}
\\&&
+  \frac{4}{(2N) ^{\frac{3}{2}}} \sum_{j_{1}, j_{2}, j_{3}=0} ^ {N-1} \frac{ \langle A_{j_{1}}, A_{j_{2}}\rangle  \langle A_{j_{1}}, A_{j_{3}}\rangle  \langle A_{j_{2}}, A_{j_{3}}\rangle }
{ \mathbf{E}\big[\vert u(t, A_{j_{1}})\vert ^ {2}\big]\mathbf{E}\big[\vert u(t, A_{j_{2}})\vert ^ {2}\big]  \mathbf{E}\big[\vert u(t, A_{j_{3}})\vert ^ {2}\big]}
\\&=&
\frac{4}{(2N) ^{\frac{3}{2}}} \sum_{j_{1}, j_{2}, j_{3}=0} ^ {N-1} \frac{I_{2} (A_{j_{1}}\otimes A_{j_{2}})  \langle A_{j_{1}}, A_{j_{3}}\rangle  \langle A_{j_{2}}, A_{j_{3}}\rangle }
{ \mathbf{E}\big[\vert u(t, A_{j_{1}})\vert ^ {2}\big]\mathbf{E}\big[\vert u(t, A_{j_{2}})\vert ^ {2}\big]  \mathbf{E}\big[\vert u(t, A_{j_{3}})\vert ^ {2}\big]}
+\mathbf{E}\Gamma ^{(3)}(F_{N, t}).
\end{eqnarray*}

In the same way, for any $p\geq 2$,
\begin{eqnarray}
\Gamma^{(p)}(F_{N, t})&=&\langle DF_{N, t}, D(-L) ^ {-1}\Gamma _{p-1}( F_{N, t}) \rangle \nonumber\\
&=&\frac{2 ^ {p-1}}{(2N)^{\frac{p}{2}}}\sum_{j_{1},..,j_{p}=0}^ {N-1}
\frac{ I_{2} (A_{j_{1}}{\tilde{\otimes}} A_{j_{2}})  \langle A_{j_{2}}, A_{j_{3}}\rangle  \langle A_{j_{3}}, A_{j_{4}}\rangle .... \langle A_{j_{p-1}}, A_{j_{p}}\rangle  \langle A_{j_{p}}, A_{j_{1}}\rangle  }
{ \mathbf{E}\big[\vert u(t, A_{j_{1}})\vert ^ {2}\big].... \mathbf{E}\big[\vert u(t, A_{j_{p}})\vert ^ {2}\big]}
\nonumber 
\\&&
+\frac{2 ^ {p-1}}{(2N)^{\frac{p}{2}}}\sum_{j_{1},..,j_{p}=0}^ {N-1}\frac{   \langle A_{j_{1}}, A_{j_{2}}\rangle \langle A_{j_{2}}, A_{j_{3}}\rangle  \langle A_{j_{3}}, A_{j_{4}}\rangle .... \langle A_{j_{p-1}}, A_{j_{p}}\rangle  \langle A_{j_{p}}, A_{j_{1}}\rangle  }
{ \mathbf{E}\big[\vert u(t, A_{j_{1}})\vert ^ {2}\big].... \mathbf{E}\big[\vert u(t, A_{j_{p}})\vert ^ {2}\big]}
\nonumber \\&=&
\frac{2 ^ {p-1}}{(2N)^{\frac{p}{2}}}\sum_{j_{1},..,j_{p}=0}^ {N-1}
\frac{ I_{2} (A_{j_{1}}{\tilde{\otimes}} A_{j_{2}})  \langle A_{j_{2}}, A_{j_{3}}\rangle  \langle A_{j_{3}}, A_{j_{4}}\rangle .... \langle A_{j_{p-1}}, A_{j_{p}}\rangle  \langle A_{j_{p}}, A_{j_{1}}\rangle  }
{ \mathbf{E}\big[\vert u(t, A_{j_{1}})\vert ^ {2}\big].... \mathbf{E}\big[\vert u(t, A_{j_{p}})\vert ^ {2}\big]}
\nonumber \\&&
+\mathbf{E}\Gamma^{(p)}(F_{N,t}).
\label{4a-2}
\end{eqnarray}
where
\begin{eqnarray}&&
\mathbf{E}\big[\Gamma ^{(p)}(F_{N,t})\big]
\nonumber\\&=&
\frac{2 ^ {p-1}}{(2N)^{\frac{p}{2}}}\sum_{j_{1},..,j_{p}=0}^ {N-1}\frac{   \langle A_{j_{1}}, A_{j_{2}}\rangle \langle A_{j_{2}}, A_{j_{3}}\rangle  \langle A_{j_{3}}, A_{j_{4}}\rangle .... \langle A_{j_{p-1}}, A_{j_{p}}\rangle  \langle A_{j_{p}}, A_{j_{1}}\rangle  }
{ \mathbf{E}\big[\vert u(t, A_{j_{1}})\vert ^ {2}\big].... \mathbf{E}\big[\vert u(t, A_{j_{p}})\vert ^ {2}\big]}.
\label{5s-1}
\end{eqnarray}

Then, using (\ref{core3}) and (\ref{core4}) in Lemma \ref{ll2}
\begin{eqnarray}\label{201904220244}
&&\mathbf{E}\big[\Gamma ^{(p)}(F_{N, t})\big]
\nn\\&=&
\frac{2 ^ {p-1}}{(2N)^{\frac{p}{2}}}\left[ N + a_{p,2} N(N-1) \left( \frac{ \left(-\frac{1}{8N^{2}}\right) }{ \left( \frac{1}{4N}(2t-\frac{1}{2N}) \right) }\right) ^{2} + a_{p,3}N(N-1) (N-2) \left( \frac{ \left(-\frac{1}{8N^{2}}\right) }{ \left( \frac{1}{4N}(2t-\frac{1}{2N}) \right) }\right) ^{3}\right.
\nn\\
&&\left. +\cdots
+ a_{p,p}{ N(N-1)\cdots(N-p+1)}
\left( \frac{ \left(-\frac{1}{8N^{2}}\right) }{ \left( \frac{1}{4N}(2t-\frac{1}{2N}) \right) }\right) ^{p}\right]
\end{eqnarray}
where $a_{p,2},.., a_{p, p}$ are combinatorial constants. In order to get the explicit asymptotic expansion of the $\mathbf{E}\big[\Gamma ^{(p)}(F_{N, t})\big]$ (which is necessary in order to check $ [C]$), we will need  to evaluate the Taylor expansion of $\left( \frac{ \left(-\frac{1}{8N^{2}}\right) }{ \left( \frac{1}{4N}(2t-\frac{1}{2N}) \right) }\right) ^{k}$ for every $k\geq 1$ integer. We can write
$$
\left( \frac{ \left(-\frac{1}{8N^{2}}\right) }{ \left( \frac{1}{4N}(2t-\frac{1}{2N}) \right) }\right) ^{k} =(-1) ^{k} \left( \frac{1}{4Nt} \right) ^{k} \left( \frac{1}{1-\frac{1}{4Nt}}\right) ^{k}.
$$

Using
$$\left( \frac{1}{1-x} \right) ^{k}=\frac{1}{(k-1)!}  \left( \frac{1}{1-x} \right) ^{(k-1)}= \frac{1}{(k-1)!} \sum_{n=k-1} ^{\infty} {\frac{n!}{ (n-k+1)!}} 
x ^{{ n-k+1}}=\sum_{n=0}^{\infty} C_{n+k-1}^{k-1} x ^{n},$$
{ where 
$C^a_b=\left(\begin{array}{c}{ b}\\ a\end{array}\right)$, 
}
we get 

\begin{equation}\label{5s-2}
\left(\frac{ \left(-\frac{1}{8N^{2}}\right) }{ \left( \frac{1}{4N}(2t-\frac{1}{2N}) \right) }\right) ^{k}=(-1) ^{k} \left( \frac{1}{4Nt} \right) ^{k} \sum_{n=0}^{\infty} C_{n+k-1}^{k-1}\left(\frac{1}{4Nt}\right) ^{n}.
\end{equation}
Let us now check assumptions  $[A1],  [A2]$  and $ [C]$ in order to apply Theorem \ref{201901040035}.

\subsection{Checking condition $[C]$}\label{201904220306}
We will show that condition $[C]$ is satisfied for $C=1$ and $\gamma=\frac{1}{2}$.  Let us  first look to assumption $[C] (i).$ Since $d=1$, we have $\mathbb{I}= \{1\} $ and the only non-empty  subset of $\mathbb{I}^{2}$ is $\{{(1, 1)}\}$. Let {$\Gamma ^{(2)} = \Gamma ^{(2)}_{I_{2} }$}. 
In order to check $ [C] (i)$, we need to find the asymptotic behavior of 
$ \mathbf{E}\big[ \Gamma^{(2)} (F_{N, t} )\big]- 1$. From (\ref{5s-1})

\begin{eqnarray*}
 \mathbf{E}\big[ \Gamma ^{(2)}(F_{N, t})\big]-1&=& \frac{2}{2N} \sum_{j_{1}, j_{2}=0 } ^ {N-1}\frac{  \langle A_{j_{1}}, A_{j_{2}}\rangle ^ {2}  }
 { \mathbf{E}\big[\vert u(t, A_{j_{1}})\vert ^ {2}\big]\mathbf{E}\big[\vert u(t, A_{j_{2}})\vert ^ {2}\big]}-1
 \\&=& 
\frac{1}{N} N(N-1) \left( \frac{ \left(-\frac{1}{8N^{2}}\right) }
{ \left( \frac{1}{4N}(2t-\frac{1}{2N}) \right) }\right) ^{2} 
\end{eqnarray*}
and from (\ref{5s-2}), for every $q\geq 3$ 

\begin{eqnarray}
 &&\mathbf{E}\big[ \Gamma ^{(2)}(F_{N, t})\big]-1\nonumber\\
&=&
\frac{1}{N} N(N-1) (-1) ^{2} \frac{1}{(4Nt)^{2}} \sum_{n=0} ^{\infty} C_{n+1}^{1} \left( \frac{1}{4Nt} \right) ^{n}\nonumber\\
&=&
(N-1)\frac{1}{(4Nt)^{2}} \left( 1+ 2\frac{1}{4Nt}+3\frac{1}{(4Nt) ^{2}} +....+ (q+1)\frac{1}{(4Nt) ^{q} }+ o(N ^{-q} ) \right)\nonumber \\
&=& 
\frac{1}{ (4t) ^{2} } \frac{1}{N} 
+ \left( 2\frac{1}{(4t) ^{3}} -\frac{1}{(4t) ^{2}}\right){ \frac{1}{N^2}+\cdots}
 +\left( { q}\frac{1}{(4t)^{q+1}} - (q-1) \frac{1}{(4t) ^{q} } \right) \frac{1}{N ^{{ q}}} + o( N^{-(q+1)}).
\nn\\\label{24i-1}
\end{eqnarray}
Therefore condition $ [C] (i) $ holds by taking $q=\frac{{ p-1}}{2}$ (so $q+1=\frac{p-1}{2}$) with 
$$ c(I_{2}, k)= 0 \mbox{ if } k \mbox{ is odd and }  c(I_{2}, 2k)= k\frac{1}{(4t)^{k+1}} - (k-1) \frac{1}{(4t) ^{k} }, \hskip0.2cm k\geq 1.$$

Next, we show $ [C] (ii). $ We will obtain the asymptotic expansion of 
${\mathbf{E}[\Gamma^{(j)} (F_{N, t})]}$ for every $j\geq 3$ integer. By using { (\ref{201904220244})},
\begin{eqnarray}
&&
\mathbf{E}\big[\Gamma ^{(j)}(F_{N, t})\big]\nonumber \\&=&\frac{2 ^ {j-1}}{(2N)^{\frac{j}{2}}}\left[ N + a_{j,2} N(N-1) \left( \frac{ \left(-\frac{1}{8N^{2}}\right) }{ \left( \frac{1}{4N}(2t-\frac{1}{2N}) \right) }\right) ^{2} + a_{j,3}N(N-1) (N-2) \left( \frac{ \left(-\frac{1}{8N^{2}}\right) }{ \left( \frac{1}{4N}(2t-\frac{1}{2N}) \right) }\right) ^{3}\right.\nonumber \\
&&\left. +...
+ a_{j,j}{ N(N-1)\cdots(N-j+1)}
\left( \frac{ \left(-\frac{1}{8N^{2}}\right) }{ \left( \frac{1}{4N}(2t-\frac{1}{2N}) \right) }\right) ^{j}\right]\label{24i-4}
\end{eqnarray}
where $a_{j,2},.., a_{j, j}$ are combinatorial constants. Therefore, via (\ref{5s-2})

\begin{eqnarray*}
\mathbf{E}\big[\Gamma^{(j)}(F_{N, t})\big]
&=&\frac{2 ^ {j-1}}{(2N)^{\frac{j}{2}}} \left[ N + a_{j,2}(-1) ^{2} \frac{1}{({4t})^{2}} \frac{N!}{(N-2)! N ^{2}}\sum_{n=0} ^{\infty} C_{n+1}^{1} \left(\frac{1}{4Nt}\right) ^{n}\right. \\
&&\left.+ a_{j,3}(-1) ^{3} \frac{1}{({4t})^{3}} \frac{N!}{(N-3)! N ^{3}}\sum_{n=0} ^{\infty} C_{n+2}^{2} \left(\frac{1}{4Nt}\right) ^{n}+....+\right.\\
&&\left. +  a_{j,j}(-1) ^{p} \frac{1}{({4t})^{j}} \frac{N!}{(N-j)! N ^{j}}\sum_{n=0} ^{\infty} C_{n+j}^{j} \left(\frac{1}{4Nt}\right) ^{n}\right].
\end{eqnarray*}
Since 
\beas 
\frac{N!}{(N-k)! N ^{k} } 
&=&
{ \bigg(1-\frac{1}{N}\bigg)\cdots\bigg(1-\frac{k-1}{N}\bigg)}
\eeas
for $k\geq 2$, 
we obtain
\begin{eqnarray}\label{19s-7}
\mathbf{E}\big[ \Gamma ^{(j)}(F_{N, t })\big]&=& c  (I_{j}, 1)  \frac{1}{ N^{\frac{j}{2}-1}}+ c (I_{j}, 3) \frac{1}{N^{\frac{j}{2}} }+c(I_{j}, 5) \frac{1}{ N^{\frac{j}{2}+1}}\\
&&+\ldots +c(I_{j}, p-j+2) \frac{1}{ N^{\frac{p-1}{2}}}+ o\left(\frac{1}{ N^{\frac{p-1}{2}}}\right) \nonumber
\end{eqnarray}
for $j\in \{ 3,..., p+1\}$. 
All the coefficients above can be written explicitely. In particular $c(I_{j}, 2k)=0$ for every $k\geq 1$ integer while
$$c (I_{j}, 1)= 2 ^{\frac{j}{2}-1}, c (I_{j}, 3)= 2 ^{\frac{j}{2}-1} \left(  a_{j,2}(-1) ^{2} \frac{1}{(4t)^{2}} + a_{j,3}(-1) ^{3} \frac{1}{(4t)^{3}} +....+ a_{j,j}(-1) ^{j} \frac{1}{(4t)^{j}} \right) $$
and
$$c (I_{j}, 5) = 2 ^{\frac{j}{2}-1}\sum_{k=2} ^{j} a_{j,k}(-1) ^{k} \frac{1}{(4t) ^{k} } \left( \frac{k+1}{4t} -\frac{ (k-1)k}{2} \right). $$

\subsection{Checking conditions [A1] and [A2] }

Condition $[A1] (i)$  is clearly verified for every $l, r$ because $F_{N, t}$ is an element of the  second Wiener chaos with $\mathbf{E}\big[F_{N}^{2}\big] \to1$ as $N\to \infty	$. Condition $ [A1] (ii)$ can be checked similarly to  $ [A2] (i)$. 

{ Let $\sfq=(p-1)/2$. }
Due to Lemma 1 (b) and by the hypercontractivity propery of multiple stochastic integrals (\ref{hyper}), it suffices to check that for every $p\geq 3$
    $$ \big\Vert \Gamma ^{{ (}p+1)} (F_{N, t})- \mathbf{E}\big[ \Gamma ^{(p+1)} (F_{N, t} ) \big]\big\Vert _{ 2} = o (N ^{-\frac{p-1}{2}}).$$

Let $\tilde{\Gamma} ^ {(p+1) }(F_{N, t} )= \Gamma ^{(p+1)} (F_{N,t})
-\mathbf{E}\big[ \Gamma^{(p+1)}(F_{N, t})\big].$ 
Then { by (\ref{4a-2}),}
\begin{eqnarray}
&&\mathbf{E}\bigg[ \left(\tilde{\Gamma} ^ {(p+1) }(F_{N, t} )\right) ^{2}\bigg]
\left( \frac{2 ^ {p}}{(2N) ^{\frac{p+1}{2}}}\right) ^{2} 
\nn\\&&\times
2\sum_{j_{1},..,j_{p+1}, \atop k_{1},.., k_{p+1}=0}^ {N-1}
\frac{ \langle A_{j_{1}}{\tilde{ \otimes}} A_{j_{2}}, A_{k_{1}}\tilde{ \otimes} A_{k_{2}}\rangle \langle A_{j_{2}}, A_{j_{3}}\rangle .... \langle A_{j_{p+1}}, A_{j_{1}}{\rangle}
\langle A_{k_{2}}, A_{k_{3}}\rangle...  \langle A_{k_{p+1}}, A_{k_{1}}\rangle }
{ \mathbf{E}\big[\vert u(t, A_{j_{1}})\vert ^ {2}\big]\cdots \mathbf{E}\big[\vert u(t, A_{j_{p+1}})\vert ^ {2}\big]\mathbf{E}\big[\vert u(t, A_{k_{1}})\vert ^ {2}\big]\cdots\mathbf{E}\big[\vert u(t, A_{k_{p+1}})\vert ^ {2}\big]}\nonumber \\
&=& 
\left( \frac{2 ^ {p+1}}{(2N) ^{\frac{p+1}{2}}}\right) ^{2} 2
\sum_{j_{1},..,j_{p+1}, \atop k_{1},.., k_{p+1}=0}^ {N-1}
\nonumber\\&&\times 
\frac{  \langle A_{j_{1}}, A_{k_{1}}\rangle\langle A_{j_{2}}, A_{k_{2}}\rangle\langle A_{j_{2}}, A_{j_{3}}\rangle....  \langle A_{j_{p+1}}, A_{j_{1} }\rangle \langle A_{k_{2}}, A_{k_{3}}\rangle....  \langle A_{k_{p+1}}, A_{k_{1}}\rangle}
{ \mathbf{E}\big[\vert u(t, A_{j_{1}})\vert ^ {2}\big]\cdots \mathbf{E}\big[\vert u(t, A_{j_{p+1}})\vert ^ {2}\big]\mathbf{E}\big[\vert u(t, A_{k_{1}})\vert ^ {2}\big]\cdots\mathbf{E}\big[\vert u(t, A_{k_{p+1}})\vert ^ {2}\big]} \label{4a-1}
\end{eqnarray}
where we used the symmetry of the sums above and the formula
$$\langle f\tilde{\otimes } g, f_{1}, \tilde{\otimes } g_{1} \rangle= \frac{1}{2} (\langle f,f_{1}\rangle \langle g,g_{1}\rangle +\langle f,g_{1}\rangle \langle g,f_{1}\rangle ).$$

Consequently, by (\ref{core3}) and (\ref{core4}), we get

\begin{eqnarray}
&&
\mathbf{E}\bigg[ \left(\tilde{\Gamma} ^ {(p) }(F_{N, t} )\right) ^{2}\bigg]
\nonumber \\&=& 
2\left( \frac{2 ^ {p}}{(2N) ^{\frac{p+1}{2}}}\right) ^{2} ( N+ O(1)) = 2 ^{p} N ^{-(p+1)} ( N+ O(1))= 2 ^{p} N ^{-p} + o(N^{-p}).\label{23i-1}
\end{eqnarray}

The above estimate is true for every $p\geq 3$ and it clearly implies $ [A2]$.

To summarize the behavior of the cumulants,  we have the  situation described in Table 1 with $\gamma=\frac{1}{2}$.

\begin{en-text}
\begin{table}[ht]
\caption{The asymptotic behavior of $ \mathbf{E}\left[ \Gamma ^ {(j)} \right]$ }
\centering
\begin{tabular}{c c c c c}
\hline\hline
Summand  & First order  term &Second term& Third term &Fourth term \\ [0.5ex] 
\hline
$\mathbf{E}\left[ \Gamma ^ {(3)}(F_{N})\right]$  & $\frac{1}{N ^{\gamma}}$& $\frac{1}{ N ^ {\gamma +1}}$ &$\frac{1}{ N ^ {\gamma +2}}$& $\frac{1}{ N ^ {\gamma +3}}$ \\
$\mathbf{E} \left[ \Gamma ^ {(4)}(F_{N}) \right]+ (\mathbf{E}\left[ \Gamma ^ {(2)}(F_{N})\right] -1)$&$\frac{1}{ N ^ {\gamma +\frac{1}{2} }}$ &$\frac{1}{ N ^ {\gamma +\frac{3}{2} }}$&$\frac{1}{ N ^ {\gamma +\frac{5}{2} }}$&$\frac{1}{ N ^ {\gamma +\frac{7}{2} }}$ \\
$\mathbf{E}\left[ \Gamma ^ {(5)}(F_{N})\right]$  &$\frac{1}{N ^{\gamma +1}}$ &$\frac{1}{N ^{\gamma+2}}$ &$\frac{1}{N ^{\gamma+3}}$&$\frac{1}{N ^{\gamma+4}}$ \\
$\mathbf{E}\left[ \Gamma ^ {(6)}(F_{N})\right]$  & $\frac{1}{ N ^ {\gamma +\frac{3}{2} }}$ & $\frac{1}{ N ^ {\gamma +\frac{5}{2} }}$ & $\frac{1}{ N ^ {\gamma +\frac{7}{2} }}$&$\frac{1}{ N ^ {\gamma +\frac{9}{2} }}$ \\
$\mathbf{E}\left[ \Gamma ^ {(7)}(F_{N})\right]$   &  $\frac{1}{ N ^ {\gamma +2}}$ &  $\frac{1}{ N ^ {\gamma +3}}$ &  $\frac{1}{ N ^ {\gamma +4}}$& $\frac{1}{ N ^ {\gamma +5}}$ \\ 
$\ldots $\\
$\mathbf{E}\left[ \Gamma ^ {(p)}(F_{N})\right]$   &  $\frac{1}{ N ^ {\gamma +\frac{p-3}{2}}}$ & $\frac{1}{ N ^ {\gamma +\frac{p-3}{2} +1}}$ &  $\frac{1}{ N ^ {\gamma +\frac{p-3}{2}+2}}$& $\frac{1}{ N ^ {\gamma +\frac{p-3}{2}+3}}$ \\
$\mathbf{E}\left[ \Gamma ^ {(p+1)}(F_{N})\right]$   &  $\frac{1}{ N ^ {\gamma +\frac{p-2}{2}}}$ & $\frac{1}{ N ^ {\gamma +\frac{p-2}{2} +1}}$ &  $\frac{1}{ N ^ {\gamma +\frac{p-2}{2}+2}}$& $\frac{1}{ N ^ {\gamma +\frac{p-2}{2}+3}}$\\
\hline
\end{tabular}
\label{table1}
\end{table}
\end{en-text}
\begin{table}[ht]
\caption{The asymptotic behavior of $ \mathbf{E}\left[ \Gamma ^ {(j)} \right]$ }
\centering
{
\begin{tabular}{c c c c c}
\hline\hline
Summand  & First order  term &Second term& Third term &Fourth term \\ [0.5ex] 
\hline
$\mathbf{E}\left[ \Gamma ^ {(3)}(F_{N})\right]$  & $\frac{1}{N ^{\gamma}}$& $\frac{1}{ N ^ {3\gamma }}$ &$\frac{1}{ N ^ {5\gamma }}$& $\frac{1}{ N ^ {7\gamma }}$ \\
$\mathbf{E} \left[ \Gamma ^ {(4)}(F_{N}) \right]+ (\mathbf{E}\left[ \Gamma ^ {(2)}(F_{N})\right] -1)$&$\frac{1}{ N ^ {2\gamma }}$ &$\frac{1}{ N ^ {4\gamma }}$&$\frac{1}{ N ^ {6\gamma }}$&$\frac{1}{ N ^ {8\gamma }}$ \\
$\mathbf{E}\left[ \Gamma ^ {(5)}(F_{N})\right]$  &$\frac{1}{N ^{3\gamma}}$ &$\frac{1}{N ^{5\gamma}}$ &$\frac{1}{N ^{7\gamma}}$&$\frac{1}{N ^{9\gamma}}$ \\
$\mathbf{E}\left[ \Gamma ^ {(6)}(F_{N})\right]$  & $\frac{1}{ N ^ {4\gamma }}$ & $\frac{1}{ N ^ {6\gamma  }}$ & $\frac{1}{ N ^ {8\gamma}}$&$\frac{1}{ N ^ {10\gamma }}$ \\
$\mathbf{E}\left[ \Gamma ^ {(7)}(F_{N})\right]$   &  $\frac{1}{ N ^ {5\gamma}}$ &  $\frac{1}{ N ^ {7\gamma }}$ &  $\frac{1}{ N ^ {9\gamma }}$& $\frac{1}{ N ^ {11\gamma }}$ \\ 
$\ldots $\\
$\mathbf{E}\left[ \Gamma ^ {(p)}(F_{N})\right]$   &  $\frac{1}{ N ^ {(p-2)\gamma}}$ & $\frac{1}{ N ^ {p\gamma }}$ &  $\frac{1}{ N ^ {(p+2)\gamma}}$& $\frac{1}{ N ^ {(p+4)\gamma}}$ \\
$\mathbf{E}\left[ \Gamma ^ {(p+1)}(F_{N})\right]$   &  $\frac{1}{ N ^ {(p-1)\gamma}}$ & $\frac{1}{ N ^ {(p+1)\gamma}}$ &  $\frac{1}{ N ^ {(p+3)\gamma}}$& $\frac{1}{ N ^ {(p+5)\gamma}}$\\
\hline
\end{tabular}
}
\label{table1}
\end{table}

\subsection{The multidimensional case}\label{201904220344}
Consider the sequence
\begin{equation}
\label{fn2}{\bf F}_{N, t} = (F_{N, t_{1}}, F_{N, t_{2}}) 
\end{equation}
with $F_{N, t}$ given by (\ref{fn}) and $t_{1}\not=t_{2}$ 
and { $t_1,t_2>1/2$}. 
Using relations (\ref{core1}) and (\ref{core2}) we can prove that this two-dimensional sequence satisfies $ [A1], [A2]$ and $ [C]$.

Recall that we  denoted by $I_{q}= I _{q} ^ {W} $ the multiple integral of order $q\geq 1$ with respect to the Gaussian field $W$ (whose covariance is given by (\ref{10s-1})) and by  $D=D^ {W}$ the Malliavin derivative with respect to $W$. In this case, the sequence (\ref{fn}) can be written

\begin{equation*}
F_{N, t_{i}}= \frac{1}{{\sqrt{2N}}}
\sum_{j=0} ^ {N-1} \frac{ I_{2}(g_{t_{i},  j}^ {\otimes 2})} 
{ \mathbf{E} \big[\vert u(t, A_{j})\vert ^ {2}\big]}
\end{equation*}
for $i=1,2$, where we used the notation $u(t, A_{j})$ from (\ref{not}) and $g_{t, i}$ from (\ref{gg}). Let us first compute the Gamma factors (\ref{gf}) of the vector (\ref{fn2}). Since, for $a=1,2$

\begin{equation*}
DF_{N, t_{a}}=\frac{2}{{\sqrt{2N}}}
\sum _{j=0} ^ {N-1} \frac{ I_{1}(g_{t_{a},  j}) g_{t_{a},  j} }
{ \mathbf{E}\big[ \vert u(t_{a}, A_{j})\vert ^ {2}\big]}
\mbox{ and }
D(-L) ^ {-1} F_{N, t_{a}} =  \frac{1}{{\sqrt{2N}}}
\sum _{j=0} ^ {N-1} \frac{ I_{1}(g_{t_{a},  j}) g_{t_{a},  j} }{ \mathbf{E} \big[\vert u(t_{a}, A_{j})\vert ^ {2}\big]}
\end{equation*}
and by using the symbolic notation, for $a, b=1,2$ and $j_{1}, j_{2}=0,1,.., N-1$
\begin{equation*}
 \langle A_{j_{1}}, A_{j_{2}}\rangle _{\mathcal{H} _{t_{a}, t_{b}}}
 \yeq \mathbf{E} \big[u(t_{a}, A_{j_{1}}) u(t_{b}, A_{j_{2}})\big]
 {=\langle g_{t_a,j_1},g_{t_b,j_2}\rangle}.
\end{equation*}
we get, for every $i_{1}, i_{2}=1,2$

\begin{eqnarray*}
\Gamma ^{(2)}_ {i_{1}, i_{2}} ({\bf F }_{N,t} )
&=&
\langle DF_{N, t_{i_{1}}}, D(-L) ^ {-1} F_{N, t_{{ i_{2}}}}\rangle \\
&=&
{ \frac{1}{N}}
\sum_{j_{1}, j_{2}=0} ^ {N-1} \frac{  I_{1}(g_{t_{i_{1}}, j_{1}})   I_{1}(g_{t_{i_{2}},  j_{2}})   \langle A_{j_{1}}, A_{j_{2}}\rangle _{\mathcal{H} _{t_{i_{i}}, t_{i_{2}}}}}
{ \mathbf{E} \big[\vert u(t_{i_{1}}, A_{j_{1}})\vert ^ {2}\big] \mathbf{E}\big[\vert u(t_{i_{2}}, A_{j_{2}})\vert ^ {2}\big]}
\\&=&
{ \frac{1}{N}}
\sum_{j_{1}, j_{2}=0} ^ {N-1} \frac{  I_{2}((g_{t_{i_{1}},  j_{1}})
{\tilde{\otimes}}  (g_{t_{i_{2}}, j_{2}}) )  \langle A_{j_{1}}, A_{j_{2}}\rangle _{\mathcal{H} _{t_{i_{i}}, t_{i_{2}}}}}
{ \mathbf{E} \big[\vert u(t_{i_{1}}, A_{j_{1}})\vert ^ {2}\big] \mathbf{E}\big[\vert u(t_{i_{2}}, A_{j_{2}})\vert ^ {2}\big]}
\\&&
+{ \frac{1}{N}}
\sum_{j_{1}, j_{2}=0} ^ {N-1} \frac{   \langle A_{j_{1}}, A_{j_{2}}\rangle^ {2} _{\mathcal{H} _{t_{i_{i}}, t_{i_{2}}}}}
{ \mathbf{E} \big[\vert u(t_{i_{1}}, A_{j_{1}})\vert ^ {2}\big] \mathbf{E}\big[\vert u(t_{i_{2}}, A_{j_{2}})\vert ^ {2}\big]}
\\&=&
{ \frac{1}{N}}
\sum_{j_{1}, j_{2}=0} ^ {N-1} \frac{  I_{2}((g_{t_{i_{1}},  j_{1}}){\tilde{\otimes}}  (g_{t_{i_{2}}, j_{2}}) )  \langle A_{j_{1}}, A_{j_{2}}\rangle _{\mathcal{H} _{t_{i_{i}}, t_{i_{2}}}}}
{ \mathbf{E} \big[\vert u(t_{i_{1}}, A_{j_{1}})\vert ^ {2}\big] \mathbf{E}\big[\vert u(t_{i_{2}}, A_{j_{2}})\vert ^ {2}\big]}
\\&&
+\mathbf{E} \big[\Gamma ^{(2)}_ {(i_{1}, i_{2})} ({\bf F }_{N, t} )\big].
\end{eqnarray*}
In the same way, we get for $p\geq 2$

\begin{eqnarray*}
\Gamma ^ {(p)} _{i_{1},.., i_{p}}( {\bf F}_{N, t}) &=& \frac{2 ^ {p-1}}{(2N) ^ {\frac{p}{2}}}  \sum_{j_{1},.., j_{p} =0}^ {N-1}  
 \frac{  I_{2}((g_{t_{i_{1}}, x, j_{1}}){\tilde{\otimes}}  (g_{t_{i_{2}}, x, j_{2}}) )  \langle A_{j_{2}}, A_{j_{3}}\rangle _{\mathcal{H} _{t_{i_{2}}, t_{i_{3}}}} ....  \langle A_{j_{p}}, A_{j_{1}}\rangle _{\mathcal{H} _{t_{i_{p}}, t_{i_{1}}}}}
 { \mathbf{E} \big[\vert u(t_{i_{1}}, A_{j_{1}})\vert ^ {2}\big]\cdots\mathbf{E}\big[ \vert u(t_{i_{p}}, A_{j_{p}})\vert ^ {2}\big]}
 \\&&
 + \mathbf{E}\big[ \Gamma ^ {(p)} _{i_{1},.., i_{p}} ({\bf F}_{N, t} )\big]
\end{eqnarray*}
where
\begin{equation}
\mathbf{E} {\big[}\Gamma ^ {(p)} _{i_{1},.., i_{p}} ({\bf F}_{N, t} ){\big]}
= 
\frac{2 ^ {p-1}}{(2N) ^ {\frac{p}{2}}}  \sum_{j_{1},.., j_{p} =0}^ {N-1}  \frac{  \langle A_{j_{1}}, A_{j_{2}}\rangle _{\mathcal{H} _{t_{i_{i}}, t_{i_{2}}}}  \langle A_{j_{2}}, A_{j_{3}}\rangle _{\mathcal{H} _{t_{i_{2}}, t_{i_{3}}}}...  \langle A_{j_{p}}, A_{j_{1}}\rangle _{\mathcal{H} _{t_{i_{p}}, t_{i_{1}}}}}
{ \mathbf{E} \big[\vert u(t_{i_{1}}, A_{j_{1}})\vert ^ {2}\big]\cdots\mathbf{E}\big[ \vert u(t_{i_{p}}, A_{j_{p}})\vert ^ {2}\big]}.
\label{24i-2}
\end{equation}

\subsection{Checking conditions [A1], [A2] and [C] }

We will check our main {\colorg assumptions} in the particular case $p=2$ with $ \gamma=\frac{1}{2} $ and $C$ {\colorg  a symmetric} matrix in dimension two. Let us first determine the limit covariance matrix $C$. 

Denote by $\lceil x\rceil$ the minimum integer that is not less than $x$. 
Let $\eta_N=\lceil\tau N\rceil-N\tau$ for $\tau=|t_1-t_2|$.

\begin{lemma}\label{ll10}
Let $t_{1} \not=t_{2}$ 
{ and $t_1,t_2>1/2$}. 
\begin{enumerate}
\item Assume $\vert t_{1}- t_{2}\vert  \geq 1$. Then
{
\begin{equation*}
\mathbf{E} \big[  \Gamma ^ {(2)} _{i,j} ({\bf F}_{N, t} )\big] = 
1_{\{i=j\}}\big(1+O(N^{-1})\big)
\end{equation*}
}
as $N\to\infty$ for $i,j=1,2$. 

\item Assume  $\vert t_{1}-t_{2}\vert <1$
{ (and hence $t_{1}+ t_{2} >1$)}. 
\begin{en-text}
Then for $i,j=1,2$, 
 \begin{equation*}
\lim _{N \to \infty} \mathbf{E} \left[  \Gamma ^ {(2)} _{i,j} ({\bf F}_{N, t} )\right] = C_{i,j}
\end{equation*}
with $ C_{1,1}= C_{2,2}=1$ and 
{
$ C_{1,2}= C_{2,1} $ given by (\ref{3m-3}). 
\end{en-text}
{
Then, as $N\to\infty$,  
\beas
\mathbf{E} \big[  \Gamma ^ {(2)} _{i,i} ({\bf F}_{N, t} )\big] 
&=&
1+O(N^{-1})
\eeas
for $i=1,2$, and 
\bea\label{3m-3}
\mathbf{E} \big[  \Gamma ^ {(2)} _{i,j} ({\bf F}_{N, t}) \big]
&=& 
\frac{(1-|t_1-t_2|)(t_1\wedge t_2)^2}{2t_1t_2}
\big(2\eta_N^2-2\eta_N+1\big)+O(N^{-1})
\eea
for $(i,j)=(1,2)$, $(2,1)$. 
}
\end{enumerate} 
\end{lemma}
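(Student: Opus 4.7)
The plan is to begin from the expression (\ref{24i-2}) at $p=2$,
\beas
\mathbf{E}\big[\Gamma^{(2)}_{i,j}({\bf F}_{N,t})\big]
&=&
\frac{1}{N}\sum_{j_1,j_2=0}^{N-1}
\frac{\langle A_{j_1},A_{j_2}\rangle_{\mathcal{H}_{t_i,t_j}}^2}{\mathbf{E}\big[u(t_i,A_{j_1})^2\big]\>\mathbf{E}\big[u(t_j,A_{j_2})^2\big]}.
\eeas
By (\ref{core3}), the product of denominators equals $\frac{t_it_j}{4N^2}(1+O(N^{-1}))$ uniformly in $j_1,j_2$, so the right-hand side reduces to $\frac{4N}{t_it_j}\sum_{j_1,j_2}\langle A_{j_1},A_{j_2}\rangle_{\mathcal{H}_{t_i,t_j}}^2$ up to an overall relative error $O(N^{-1})$. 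In the diagonal case $i=j$, the covariances are given by (\ref{core3})--(\ref{core4}) and the sum reproduces verbatim the scalar computation leading to (\ref{24i-1}), hence $\mathbf{E}[\Gamma^{(2)}_{i,i}({\bf F}_{N,t})]=1+O(N^{-1})$ in both parts of the lemma.

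For the off-diagonal entries, the key input is Lemma \ref{ll2}(a): by (\ref{core1}), the cross-covariance $\langle A_{j_1},A_{j_2}\rangle_{\mathcal{H}_{t_1,t_2}}$ is nonzero only in three narrow windows of $m:=|j_1-j_2|$ relative to $\tau N$, where $\tau=|t_1-t_2|$. In Part~1 ($\tau\geq 1$), every admissible pair satisfies $(m+1)/N\leq 1\leq\tau$, so only the boundary pairs with $m=N-1$ at $\tau=1$ could escape the vanishing regime; for these one computes directly $f^{(2)}_{t_1,t_2,N}(N-1)=\frac14(t_1\wedge t_2)^2-\frac1{16}(t_1+t_2-1)^2=0$, using the algebraic identity $t_1+t_2-1=2(t_1\wedge t_2)$. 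Consequently $\mathbf{E}[\Gamma^{(2)}_{i,j}({\bf F}_{N,t})]=0$ identically for $i\ne j$ and $\tau\geq 1$.

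For Part~2 ($\tau<1$), I would decompose the sum into a ``far'' part ($m\geq\lceil\tau N\rceil+1$), a ``near'' part ($m\leq\lceil\tau N\rceil-2$), and a ``transition'' part consisting of the two values $m\in\{\lceil\tau N\rceil-1,\lceil\tau N\rceil\}$. The near part contributes $0$; the far part comprises at most $N^2$ pairs each with squared covariance $O(N^{-4})$, producing $O(N^{-1})$ after multiplication by $\frac{4N}{t_1t_2}$. The transition part carries the leading order: Taylor-expanding (\ref{f1})--(\ref{f2}) at $k=\lceil\tau N\rceil$, so that $k/N=\tau+\eta_N/N$, and using the cancellation $t_1+t_2-\tau=2(t_1\wedge t_2)$ to kill the spurious $O(1)$ contributions, yields
\beas
f^{(1)}_{t_1,t_2,N}(\lceil\tau N\rceil) &=& \frac{(t_1\wedge t_2)(1-\eta_N)}{4N}+O(N^{-2}),
\\
f^{(2)}_{t_1,t_2,N}(\lceil\tau N\rceil-1) &=& \frac{(t_1\wedge t_2)\,\eta_N}{4N}+O(N^{-2}).
\eeas
Each of the two values of $m$ corresponds to $2(N-m)=2(1-\tau)N+O(1)$ pairs; summing the squared covariances, multiplying by $\frac{4N}{t_1t_2}$, and recognising $(1-\eta_N)^2+\eta_N^2=2\eta_N^2-2\eta_N+1$ gives the right-hand side of (\ref{3m-3}) with an $O(N^{-1})$ remainder.

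The delicate step is the Taylor expansion of $f^{(1)}_{t_1,t_2,N}$ and $f^{(2)}_{t_1,t_2,N}$ around $k\approx\tau N$: the identity $t_1+t_2-\tau=2(t_1\wedge t_2)$ annihilates the apparent $O(1)$ contribution and exposes the sub-leading $O(N^{-1})$ piece, whose coefficient depends on the fractional remainder $\eta_N$ in precisely the combinations $(1-\eta_N)$ and $\eta_N$; this is what produces the nontrivial quadratic dependence of the limit on $\eta_N$. Some additional care is needed at boundary configurations (when $\tau N$ is an integer, so $\eta_N=0$ and the two transition values partially coincide, and when $\lceil\tau N\rceil$ is close to $N$), but these affect only the $O(N^{-1})$ remainder and do not alter the leading term.
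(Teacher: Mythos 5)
Your proposal is correct and follows essentially the same route as the paper: the same splitting of the off-diagonal sum into the far zone (covariance $-\tfrac{1}{8N^2}$, contributing $O(N^{-1})$), the vanishing near zone, and the two transition diagonals $|j_1-j_2|\in\{\lceil\tau N\rceil-1,\lceil\tau N\rceil\}$, followed by the same Taylor expansion of $f^{(1)},f^{(2)}$ via the identity $t_1+t_2-\tau=2(t_1\wedge t_2)$ that produces the $(1-\eta_N)^2+\eta_N^2$ factor. The only cosmetic difference is in Part 1, where the paper cites Corollary \ref{cor1} (independence for $|t_1-t_2|\geq1$) while you verify directly from the indicators in (\ref{core1}) that all covariances, including the boundary case $\tau=1$, $|j_1-j_2|=N-1$, vanish.
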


\begin{proof} 
In both cases,  
we have  
\begin{equation*}
\mathbf{E} \big[  \Gamma ^ {(2)} _{i,i} ({\bf F}_{N, t} )\big] = 
1+O(N^{-1})
\end{equation*}
for $i=1,2$ by Lemma \ref{201908181648}.

{
We will investigate the asymptotic behavior of 
$\mathbf{E} [  \Gamma ^ {(2)} _{1,2} ({\bf F}_{N, t} )]$. }
{ In Case 1,}
by Corollary \ref{cor1}, { we obtain} 
\begin{equation*}
\mathbf{E} \left[  \Gamma ^ {(2)} _{1,2} ({\bf F}_{N, t} )\right]= \mathbf{E} \left[ F_{N, t_{1}} F_{N, t_{2}} \right] =0.
\end{equation*}

In Case 2, using (\ref{core1}) and { (\ref{core2}),}
we can write 
\begin{eqnarray}
\mathbf{E} \left[  \Gamma ^ {(2)} _{1,2} ({\bf F}_{N, t} )\right]&=&\frac{1}{N} \sum_{j_{1}, j_{2}=0 } ^{N-1} \frac{ \left( \mathbf{E} \big[u(t_{1}, A_{j_{1}})u(t_{2}, A_{j_{2}})\big]\right) ^{2}   }
{ \mathbf{E}\big[ \vert u(t_{1}, A_{j_{1}})\vert ^ {2} \big]\mathbf{E} \big[\vert u(t_{1}, A_{j_{1}})\vert ^ {2}\big]}\nonumber \\
&=&\frac{1}{N} \left( \frac{1}{4N}(2t_{1}-\frac{1}{2N}) \right) ^{-1} \left( \frac{1}{4N}(2t_{2}-\frac{1}{2N}) \right) ^{-1}  \nonumber\\
&&
{ \times}
\sum_{j_{1}, j_{2}=0, j_{1}\not=j_{2}} ^{N-1} \left[ \frac{-1}{8N ^{2}}1_{\vert t_{1} -t_{2} \vert \leq \frac{ \vert j_{1}-j_{2}\vert -1}{N}}+  f ^{(1)} _{t_{1}, t_{2}, N} (\vert j_{1}- j_{2} \vert )1_{\frac{ \vert j_{1}-j_{2}\vert -1}{N}<\vert t_{1} -t_{2} \vert \leq \frac{ \vert j_{1}-j_{2}\vert }{N}}\right. \nonumber\\
&&\left. +f ^{(2)} _{t_{1}, t_{2}, N} (\vert j_{1}- j_{2} \vert )1_{\frac{ \vert j_{1}-j_{2}\vert }{N}<\vert t_{1} -t_{2} \vert \leq \frac{ \vert j_{1}-j_{2}\vert +1 }{N}} \right] ^{2}\label{3m-1}
\end{eqnarray}
with $f^{(1)}, f^{(2)}$ from (\ref{f1}) and (\ref{f2}) respectively.

To get the limit  of the sequence $u_{N}:= \mathbf{E}\big[ \Gamma ^{(2)}_{1,2} ({\bf F} _{N, t}) \big]$, it suffices to get the limit  of $ u_{ N\vert t_{1}-t_{2}\vert ^{-1}}.$ We can write, by (\ref{3m-1}), with $N_{1,2}= N\vert t_{1}- t_{2}\vert ^{-1}$

\begin{eqnarray}
u_{ N\vert t_{1}-t_{2}\vert ^{-1}}&=&\frac{1}{N_{1,2}} \left( \frac{1}{4N_{1,2}}(2t_{1}-\frac{1}{2N_{1,2}}) \right) ^{-1} \left( \frac{1}{4N_{1,2} }(2t_{2}-\frac{1}{2N_{1,2}}) \right) ^{-1}  \nonumber\\
&&{\times}
(u_{N} ^{(1)}+ u_{N} ^{(2)} + u_{N} ^{(3)})\label{3m-2}
\end{eqnarray}
with
\begin{equation*}
u_{N} ^{(1)} = \left( -\frac{1}{8N^{2}}\right) ^{2} \sum_{j_{1}, j_{2}=0, j_{1}\not=j_{2}} ^{[N_{1,2}]-1} 1_{\vert j_{1}-j_{2} \vert \geq N+1},
\end{equation*}

\begin{equation*}
u_{N} ^{(2)} = \sum_{j_{1}, j_{2}=0, j_{1}\not=j_{2}} ^{[N_{1,2}]-1}\left( f ^{(1)} _{t_{1}, t_{2}, N_{1,2}}(\vert j_{1}-j_{2}\vert)\right) ^{2} 1_{ \vert j_{1}-j_{2}\vert =N}, \hskip0.2cm u_{N} ^{(3)} = \sum_{j_{1}, j_{2}=0, j_{1}\not=j_{2}} ^{[N_{1,2}]-1}\left( f ^{(2)} _{t_{1}, t_{2}, N_{1,2}}(\vert j_{1}-j_{2}\vert) \right) ^{2} 1_{ \vert j_{1}-j_{2}\vert =N-1}.
\end{equation*}

Notice that
\begin{eqnarray}
&&f ^{(1)} _{t_{1}, t_{2}, N_{1,2}}(N) = f ^{(2)} _{t_{1}, t_{2}, N_{1,2}}(N-1) \nonumber\\
&&=\frac{1}{16} \left(t_{1}+ t_{2} -\vert t_{1}-t_{2}\vert \right) ^{2}-\frac{1}{16} \left(t_{1}+ t_{2} -\vert t_{1}-t_{2}\vert -\vert t_{1}-t_{2}\vert \frac{1}{N} \right)^{2} \nonumber \\
&&= \frac{1}{8}\vert t_{1}-t_{2}\vert (t_{1}\wedge t_{2})\frac{1}{N}+ c_{2} \frac{1}{N ^{2} }+c_{3}\frac{1}{N ^{3}}+\ldots.\label{3m-4}
\end{eqnarray}

Coming back to (\ref{3m-2}), we deduce that 
\begin{eqnarray}
\mathbf{E} \left[  \Gamma ^ {(2)} _{1,2} ({\bf F}_{N, t} )\right]&\sim& \frac{1}{N _{1,2}} \frac{ 4 N_{1,2} ^{2}}{t_{1}t_{2}} \left( \frac{1}{8} (t_{1}\wedge t_{2}) \right) ^{2} \frac{1}{N_{1,2} ^{2} } \sum_{j_{1}, j_{2}=0, j_{1}\not=j_{2}} ^{[N_{1,2}]-1}\left[  1_{\vert j_{1}-j_{2} \vert = N+1}+1_{\vert j_{1}-j_{2} \vert = N-1}\right]\nonumber \\
&\sim & \frac{4}{64} \frac{(t_{1}\wedge t_{2}) ^{2} }{t_{1}t_{2}} 4 \left( \vert t_{1}-t_{2} \vert ^{-1}-1\right): =C_{1,2}= C_{2,1}.\label{3m-33}
\end{eqnarray}


\noindent
Denote $u_{N}= \mathbf{E}\big[ \Gamma ^{(2)}_{1,2} ({\bf F} _{N, t}) \big]$. 
Let $\tau=|t_1-t_2|$. Then 
\bea\label{201908190221}
u_N&=&\frac{1}{N} \left( \frac{1}{4N}(2t_{1}-\frac{1}{2N}) \right) ^{-1} \left( \frac{1}{4N }(2t_{2}-\frac{1}{2N}) \right) ^{-1}
(u_{N} ^{(1)}+ u_{N} ^{(2)} + u_{N} ^{(3)})\label{3m-2bis}
\eea
with
\begin{equation*}
u_{N} ^{(1)} = \left( -\frac{1}{8N^{2}}\right) ^{2} 
\sum_{j_{1}, j_{2}=0, j_{1}\not=j_{2}} ^{N-1} 1_{\{\vert j_{1}-j_{2} \vert \geq \tau N+1\}},
\end{equation*}
\beas
u_{N} ^{(2)} = \sum_{j_{1}, j_{2}=0, j_{1}\not=j_{2}} ^{N-1}\left( f ^{(1)} _{t_{1}, t_{2}, N}(\vert j_{1}-j_{2}\vert)\right) ^{2} 1_{\{ \tau N\leq\vert j_{1}-j_{2}\vert <\tau N+1\}}
\eeas
and
\beas
u_{N} ^{(3)} = \sum_{j_{1}, j_{2}=0, j_{1}\not=j_{2}} ^{N-1}\left( f ^{(2)} _{t_{1}, t_{2}, N}(\vert j_{1}-j_{2}\vert) \right) ^{2} 1_{ \{\tau N-1\leq\vert j_{1}-j_{2}|< \tau N\}}.
\eeas
The contribution of $u^{(1)}_N$ to $u_N$ is asymptotically negligible since 
\beas 
u^{(1)}_N
&=& 
O(N^{-2}). 
\eeas
Moreover, 
\beas 
u^{(2)}_N
&=& 
2(N-\lceil\tau N\rceil)\times\bigg\{
\frac{2}{16}\bigg(t_1+t_2-\frac{\lceil\tau N\rceil}{N}\bigg)^2
-\frac{1}{16}\bigg(t_1+t_2-\frac{\lceil\tau N\rceil+1}{N}\bigg)^2
-\frac{1}{4}(t_1\wedge t_2)^2
\bigg\}^2
\eeas
and 
\beas 
u^{(3)}_N
&=& 
2(N-\lceil\tau N\rceil+1)\times\bigg\{
\frac{1}{4}(t_1\wedge t_2)^2
-\frac{1}{16}\bigg(t_1+t_2-\frac{\lceil\tau N\rceil}{N}\bigg)^2
\bigg\}^2.
\eeas
{\colorg Since }
\beas
t_1+t_2-\frac{\lceil\tau N\rceil}{N}
&=&
t_1+t_2-\tau-\frac{\eta_N}{N}
\yeq 
2(t_1\wedge t_2)-\frac{\eta_N}{N}
\eeas
{\colorg we can express $u^{(2)}_N , u^{(3)}_N $ as }
\beas 
u^{(2)}_N 
&=& 
2(1-\tau)N^{-1}\times\bigg\{
\frac{1}{4}(t_1\wedge t_2)(1-\eta_N)+O(N^{-1})\bigg\}^2
+O(n^{-2})
\eeas
and 
\beas 
u^{(3)}_N 
&=& 
2(1-\tau)N^{-1}\times\bigg\{\frac{1}{4}(t_1\wedge t_2)\eta_N+O(N^{-1})\bigg\}^2
+O(n^{-2}). 
\eeas
Consequently, 
\beas 
u_N 
&=& 
\frac{(1-|t_1-t_2|)(t_1\wedge t_2)^2}{2t_1t_2}
\big\{(1-\eta_N)^2+\eta_N^2\big\}+O(N^{-1}).
\eeas

\end{proof}

\halflineskip

When $|t_1-t_2|<1$, the sequence $u_N$ does not converge. We need to consider the limit 
{\cred of $u_N$} 
along a subsequence of $N$ such that $\eta_N\to a\in[0,1]$. 
{\cred More precisely, }
$u_N$ does not converge, 
{\cred however,}
\bea\label{201908221054} 
u_N
&=& 
\frac{(1-|t_1-t_2|)(t_1\wedge t_2)^2}{2t_1t_2}
\big(2a^2-2a+1\big)+o(N^{-1/2})
\eea
along any subsequence such that 
\bea\label{201908220853}
\eta_N=a+o(N^{-1/2})
\eea 
as $N\to\infty$ 
for some $a\in[0,1]$.

\begin{en-text}
[We should remark that the representation (\ref{201908190221}) of $u_N$ 
is valid even when 1 $|t_1-t_2|\geq1$, and then 
$u^{(2)}_N$ and $u^{(3)}_N$ vanishes, and hence we obtain 
$C_{1,2}=0$ also from this consideration. ]
\end{en-text}

Thus, in Case 2, we can consider 
approximation to the distribution of ${\bf F}_{N,t}$ 
by the asymptotic expansion along the subsequence. 

\begin{en-text}
We now check  [A1], [A2] and [C] with  $C$ from the above lemma, $p=2$ and $\gamma =\frac{1}{2}$. Since $\mathbb{I} = \{1, 2\}$ we need to evaluate  
$$\mathbf{E} \big[\Gamma ^{(2)} _{I_{2}}  ({\bf F} _{N, t} ) \big]- C_{I_{2} }  \mbox{ and } 
\mathbf{E} \big[\Gamma ^{(3) } ({\bf F} _{N, t} ) \big]$$
for every  $I_{2}$ in  $ \mathbb{I} ^{2}$. 

Consider fist the situation when $ I_{2}= (i, i)$ with $i=1, 2$. Then
$$ \mathbf{E}\big[ \Gamma ^{(2)} _{I_{2}}  ({\bf F} _{N, t} )\big] - C_{I_{2} }\yeq
 \mathbf{E}\big[\Gamma ^{(2)}(F_{N, t_{i}})\big]-1 $$
and we can use the estimate (\ref{24i-1})  with $t=t_{i}$.

If $ I_{2}= (1, 2) $ (the case $I_{2}= (2,1) $ is similar), we will have  $\mathbf{E}\big[ \Gamma ^{(2)}_{1,2} ({\bf F} _{N, t}) \big]$ given by (\ref{3m-1}) (recall that we assumed $t_{1}+ t_{2}>1$)

The  quantity in (\ref{3m-1}) depends on the lenght of the time interval $\vert t_{1}- t_{2} \vert$. If $\vert t_{1}- t_{2}\vert  \geq 1$, then by Corollary \ref{cor1},  
$$\mathbf{E}\big[ \Gamma ^{(2)}_{1,2} ({\bf F} _{N, t})\big]=0.$$
If $\vert t_{1}-t_{2} \vert <1$,  then this correlation term does not vanish. From relations (\ref{3m-2}) and (\ref{3m-4}) in the proof of Lemma \ref{ll10}, the explicit Taylor expansion of $\mathbf{E} \Gamma ^{(2)}_{1,2} ({\bf F} _{2N, t})$ can be  obtained as follows 
\begin{equation}\label{201908190529}
\mathbf{E} \big[\Gamma ^{(2)}_{1,2} ({\bf F} _{N, t}) \big] -C_{1,2}= C_{1}\frac{1}{N}+ C_{2} \frac{1}{ N^{2}}+ \ldots
\end{equation}
with some explicite constants $C_{i}$, $i=1,2,..$.
\end{en-text}

{
Define the matrix $C=(C_{i,j})_{i,j=1,2}$ by 
$C_{i,j}=1_{\{i=j\}}$ in the case $|t_1-t_2|\geq1$, and by 
$C_{1,1}=C_{2,2}=1$ and 
\beas 
C_{1,2}\yeq C_{2,1}
&=& 
\frac{(1-|t_1-t_2|)(t_1\wedge t_2)^2}{2t_1t_2}
\big(2a^2-2a+1\big)
\eeas
in the case $|t_1-t_2|<1$. 

We now check  $[A1], [A2]$ and  $[C]$ 
for $p=2$ and $\gamma =\frac{1}{2}$. 
In what follows, we will consider the full sequence $(N)_{N\in\bbN}$ when $|t_1-t_2|\geq1$, but 
only consider 
a subsequence of $(N)_{N\in\bbN}$ satisfying (\ref{201908220853}) 
when $|t_1-t_2|<1$. 
Since $\mathbb{I} = \{1, 2\}$, we need to evaluate  
$$\mathbf{E} \big[\Gamma ^{(2)} _{I_{2}}  ({\bf F} _{N, t} ) \big]
\mbox{ and } 
\mathbf{E} \big[\Gamma ^{(3) } ({\bf F} _{N, t} ) \big]$$
for every  $I_{2}$ in  $ \mathbb{I} ^{2}$. 

By Lemma \ref{ll10}, we have the property  $[C](i)$, that is, 
\beas
\mathbf{E} \big[  \Gamma ^ {(2)} _{I_2} ({\bf F}_{N, t} )\big] 
&=&
C_{I_2}+o(N^{-1/2})
\eeas
for $I_2\in\bbI^2$ ($i=1,2$) as $N\to\infty$ 
(but along the subsequence when $|t_1-t_2|<1$). 

\begin{en-text}
If $ I_{2}= (1, 2) $ (the case $I_{2}= (2,1) $ is similar), we will have  $\mathbf{E}\big[ \Gamma ^{(2)}_{1,2} ({\bf F} _{N, t}) \big]$ given by (\ref{3m-1}) (recall that we assumed $t_{1}+ t_{2}>1$)

The  quantity in (\ref{3m-1}) depends on the lenght of the time interval $\vert t_{1}- t_{2} \vert$. If $\vert t_{1}- t_{2}\vert  \geq 1$, then by Corollary \ref{cor1},  
$$\mathbf{E}\big[ \Gamma ^{(2)}_{1,2} ({\bf F} _{N, t})\big]=0.$$
If $\vert t_{1}-t_{2} \vert <1$,  then this correlation term does not vanish. From relations (\ref{3m-2}) and (\ref{3m-4}) in the proof of Lemma \ref{ll10}, the explicit Taylor expansion of $\mathbf{E} \Gamma ^{(2)}_{1,2} ({\bf F} _{2N, t})$ can be  obtained as follows 
\begin{equation}\label{201908190529}
\mathbf{E} \big[\Gamma ^{(2)}_{1,2} ({\bf F} _{N, t}) \big] -C_{1,2}= C_{1}\frac{1}{N}+ C_{2} \frac{1}{ N^{2}}+ \ldots
\end{equation}
with some explicite constants $C_{i}$, $i=1,2,..$.
\end{en-text}
}

In order to check $ [C] (ii)$, we need to estimate 
$\mathbf{E}\big[\Gamma ^ {(3)} _{i_{1}, i_{2}, i_{3}}({\bf F}_{N, t} )\big]$ with $(i_{1}, i_{2}, i_{3} ) \in \{ 1, 2\} ^{2}$. 

If $i_{1}=i_{2}=i_{3}$ (and they are $1$ or $2$), then we can follow the lines from the one-dimensional case, see relation (\ref{24i-4}).  We will have, for $i=1, 2$,

\begin{eqnarray*}
\mathbf{E} \big[\Gamma ^{(3)} _{i, i, i}({\bf F}_{N, t} )\big]
&=&\mathbf{E} \big[\Gamma ^{(3) } (F_{N, t_{i}})\big]\\
&=&\frac{ 4}{ (2N) ^ {\frac{3}{2}}}\left[ N+3N(N-1)  \left( \frac{ \left(-\frac{1}{8N^{2}}\right) }{ \left( \frac{1}{4N}(2t_{i}-\frac{1}{2N}) \right) }\right) ^{2}  \right.\\
&&\left. + N(N-1) (N-2) \left( \frac{ \left(-\frac{1}{8N^{2}}\right) }{ \left( \frac{1}{4N}(2t_{i}-\frac{1}{2N}) \right) }\right) ^{{ 3}}\right]
\end{eqnarray*}
and by  (\ref{5s-2}) we obtain
\begin{eqnarray*}
\mathbf{E} \big[\Gamma^ {(3)}_{i, i, i}({\bf F}_{N, t})\big]
&=&\frac{ 4}{ (2N) ^ {\frac{3}{2}}} \left[ N + 3(-1) ^{2} \frac{1}{(2N(2t_{i}))^{2}} 
{\frac{N!}{(N-2)!}}
\sum_{n=0} ^{\infty} C_{n+1}^{1} \left(\frac{1}{2N(2t_{i})}\right) ^{n}\right. \\
&&\left.+ (-1) ^{3} \frac{1}{(2N(2t_{i}))^{3}} 
{ \frac{N!}{(N-3)! }}
\sum_{n=0} ^{\infty} C_{n+2}^{2} \left(\frac{1}{2N(2t_{i})}\right) ^{n}\right]. 
\end{eqnarray*}
We thus obtain the estimate (\ref{19s-7}) where in the expression of the coefficients we replace $t$ by $t_{i}$.

If $i_{1}, i_{2}, i_{3} \in \{1, 2\}$ are not all equal, then we have a different behaviors of the quantity 
$\mathbf{E}\big[ \Gamma ^ {(3)} _{i_{1}, i_{2}, i_{3}}({\bf F}_{N, t} )\big]$. We can assume $i_{1}=i_{2}=1$ and $i_{3}= 2$ since the other cases can be treated analogously.  In this situation, from (\ref{core1})-(\ref{core4})
\begin{eqnarray*}&&
\mathbf{E} \big[\Gamma ^ {(3)} _{i_{1}, i_{2}, i_{3}}({\bf F}_{N, t} )\big]
\\&=&
\frac{ 4}{ (2N) ^ {\frac{3}{2}}}\sum_{j_{1}, j_{2}, j_{3}=0} ^{N-1} 
\frac{ \mathbf{E}\big[ u(t_{1}, A_{j_{1}})u(t_{1}, A_{j_{2}})\big]
\mathbf{E}\big[ u(t_{1}, A_{j_{2}})u(t_{2}, A_{j_{3}})\big]
\mathbf{E}\big[ u(t_{2}, A_{j_{3}})u(t_{1}, A_{j_{1}})\big]}
{ \mathbf{E}\big[ \vert u(t_{1}, A_{j_{1}})\vert ^{2}\big] \mathbf{E}\big[ \vert u(t_{1}, A_{j_{2}})\vert ^{2} \big]\mathbf{E} \big[\vert u(t_{2}, A_{j_{3}})\vert ^{2}\big] }\\
&=&
\frac{ 4}{ (2N) ^ {\frac{3}{2}}} \left( \frac{1}{4N}\left(2t_{1}-\frac{1}{2N}\right)\right) ^{-2} \left( \frac{1}{4N}\left(2t_{2}-\frac{1}{2N}\right)\right) ^{-1} \\
&&\times \sum_{j_{1}, j_{2}, j_{3}=0} ^{N-1}  
\mathbf{E}\big[ u(t_{1}, A_{j_{1}})u(t_{1}, A_{j_{2}})\big]
\mathbf{E}\big[ u(t_{1}, A_{j_{2}})u(t_{2}, A_{j_{3}})\big]
\mathbf{E}\big[ u(t_{2}, A_{j_{3}})u(t_{1}, A_{j_{1}})\big]\\
&=&
\frac{ 4}{ (2N) ^ {\frac{3}{2}}} \left( \frac{1}{4N}\left(2t_{1}-\frac{1}{2N}\right)\right) ^{-2} \left( \frac{1}{4N}\left(2t_{2}-\frac{1}{2N}\right)\right) ^{-1} \\
&&
\times \sum_{j_{1}, j_{2}, j_{3}=0; j_{2}\not=j_{3}\not=j_{1}} ^{N-1}  
\mathbf{E}\big[ u(t_{1}, A_{j_{1}})u(t_{1}, A_{j_{2}})\big]
\mathbf{E}\big[ u(t_{1}, A_{j_{2}})u(t_{2}, A_{j_{3}})\big]
\mathbf{E}\big[ u(t_{2}, A_{j_{3}})u(t_{1}, A_{j_{1}})\big]
\\
&=&
\frac{ 4}{ (2N) ^ {\frac{3}{2}}} \left( \frac{1}{4N}\left(2t_{1}-\frac{1}{2N}\right)\right) ^{-2} \left( \frac{1}{4N}\left(2t_{2}-\frac{1}{2N}\right)\right) ^{-1}\\
&&
\times \left[ \sum_{j_{1},  j_{2}=0; j_{1}\not=j_{2}} ^{N-1} 
\mathbf{E}\big[\vert  u(t_{1}, A_{j_{1}}) \vert ^{2}\big] \left(  \mathbf{E}\big[ u(t_{1}, A_{j_{1}})u(t_{2}, A_{j_{2}})\big]\right)^{2}\right. \\
&&
\left. +  \sum_{j_{1}, j_{2}, j_{3}=0; j_{2}\not=j_{3}\not=j_{1}\not=j_{2}} ^{N-1}
\mathbf{E}\big[ u(t_{1}, A_{j_{1}})u(t_{1}, A_{j_{2}})\big]
\mathbf{E}\big[ u(t_{1}, A_{j_{2}})u(t_{2}, A_{j_{3}})\big]
\mathbf{E}\big[ u(t_{2}, A_{j_{3}})u(t_{1}, A_{j_{1}})\big]
 \right]
 \\&=:&
 \frac{ 4}{ (2N) ^ {\frac{3}{2}}} \left( \frac{1}{4N}\left(2t_{1}-\frac{1}{2N}\right)\right) ^{-2} \left( \frac{1}{4N}\left(2t_{2}-\frac{1}{2N}\right)\right) ^{-1}
\big(v^{(1)}_N+v^{(2)}_N\big). 
\end{eqnarray*}
{
In Case $|t_1-t_2|\geq1$, we see 
$\mathbf{E} \big[\Gamma ^ {(3)} _{i_{1}, i_{2}, i_{3}}({\bf F}_{N, t} )\big]=0$
by Corollary \ref{cor1}. 
In Case $|t_1-t_2|<1$, we are only considering the subsequence of $(N)_{N\in\bbN}$. 
By Lemma \ref{ll2} (d) and (a), we have 
\beas
|v^{(2)}_N|
&\simleq&
O(N^{-2})\times
\bigg|\sum_{j_{1}, j_{2}, j_{3}=0; j_{2}\not=j_{3}\not=j_{1}\not=j_{2}} ^{N-1}
\bigg|\mathbf{E}\big[ u(t_{1}, A_{j_{2}})u(t_{2}, A_{j_{3}})\big]
\mathbf{E}\big[ u(t_{2}, A_{j_{3}})u(t_{1}, A_{j_{1}})\big]\bigg|
\\&=&
O(N^{-2})\times \bigg(O(N^2\times N^{-2}\times N^{-2})
+O(N\times N^{-1}\times N^{-1})\bigg)
\yeq O(N^{-3})
\eeas
since the last sum is essentially one-dimensional. 
Therefore, $v^{(2)}_N$ has no essential contribution in the limit. 
As for $v^{(1)}_N$, by Lemma \ref{ll2} (c), 
\beas 
v^{(1)}_N
&=&
\sum_{j_{1},  j_{2}=0; j_{1}\not=j_{2}} ^{N-1} 
\mathbf{E}\big[\vert  u(t_{1}, A_{j_{1}}) \vert ^{2}\big] 
\left(  \mathbf{E}\big[ u(t_{1}, A_{j_{1}})u(t_{2}, A_{j_{2}})\big]\right)^{2}
\\&=&
\bigg(\frac{t_1}{2N}+O(N^{-2})\bigg)
\sum_{j_{1},  j_{2}=0; j_{1}\not=j_{2}} ^{N-1} 
\left(  \mathbf{E}\big[ u(t_{1}, A_{j_{1}})u(t_{2}, A_{j_{2}})\big]\right)^{2}.
\eeas
Therefore, 
\beas &&
\frac{ 4}{ (2N) ^ {\frac{3}{2}}} \left( \frac{1}{4N}\left(2t_{1}-\frac{1}{2N}\right)\right) ^{-2} \left( \frac{1}{4N}\left(2t_{2}-\frac{1}{2N}\right)\right) ^{-1}
v^{(1)}_N
\\&=&
\frac{ 4}{ 2 ^ {\frac{3}{2}}N^{1/2}} \left( \frac{1}{4N}\left(2t_{1}-\frac{1}{2N}\right)\right)^{-1}\bigg(\frac{t_1}{2N}+O(N^{-2})\bigg)u_N. 
\eeas
From (\ref{201908221054}), we obtain, {\colorg with $a$ from (\ref{201908220853}), }
\beas 
 \mathbf{E}\big[\Gamma ^ {(3)} _{1.1.2}({\bf F}_{N, t} )\big]
&=&
\frac{ 4}{ 2 ^ {\frac{3}{2}}N^{1/2}} \left( \frac{1}{4N}\left(2t_{1}-\frac{1}{2N}\right)\right)^{-1}\bigg(\frac{t_1}{2N}+O(N^{-2})\bigg)
\\&&\times
\frac{(1-|t_1-t_2|)(t_1\wedge t_2)^2}{2t_1t_2}
\big(2a^2-2a+1\big)+o(N^{-1/2})
\\&=&
\frac{(1-|t_1-t_2|)(t_1\wedge t_2)^2}{\sqrt{2N}t_1t_2}
\big(2a^2-2a+1\big)+o(N^{-1/2})
\eeas
Consequently, we obtained 
\begin{eqnarray}\label{201908190557}
\mathbf{E}\big[ \Gamma ^ {(3)} _{i_{1}, i_{2}, i_{3}}({\bf F}_{N, t} )\big]=
D_{1}N^{-1/2} + o(N^{-1/2})
\end{eqnarray}
and this verifies $ [C](ii)$.  

Similarly to the one-dimensional case, we can check $ [A1]--[A2] $. 
Recall that
\begin{eqnarray*}
\tilde{\Gamma}^ {(3)} _{i_{1},.., i_{3}}( {\bf F}_{N, t}) 
&=& 
\frac{2 ^ {2}}{(2N) ^ {\frac{3}{2}}}  
\sum_{j_{1},j_{2}, j_{3} =0}^ {N-1}  
 \frac{  I_{2}((g_{t_{i_{1}}, x, j_{1}}){\tilde{\otimes}}  (g_{t_{i_{2}}, x, j_{2}}) )  \langle A_{j_{2}}, A_{j_{3}}\rangle _{\mathcal{H} _{t_{i_{2}}, t_{i_{3}}}}   \langle A_{j_{3}}, A_{j_{1}}\rangle _{\mathcal{H} _{t_{i_{3}}, t_{i_{1}}}}}
 { \mathbf{E} \big[\vert u(t_{i_{1}}, A_{j_{1}})\vert ^ {2}\big]
 \mathbf{E} \big[\vert u(t_{i_{2}}, A_{j_{2}})\vert ^ {2}\big]
 \mathbf{E}\big[ \vert u(t_{i_{3}}, A_{j_{3}})\vert ^ {2}\big]}
\end{eqnarray*}
for 
\beas 
\tilde{\Gamma} ^ {(3)} _{i_{1},.., i_{3}}( {\bf F}_{N, t}) 
&=&
\Gamma ^ {(3)} _{i_{1},.., i_{3}}( {\bf F}_{N, t}) 
-\mathbf{E}\big[ \Gamma ^ {(3)} _{i_{1},.., i_{3}} ({\bf F}_{N, t} )\big].
\eeas
Therefore, 
\beas &&
{\colorg \mathbf{E}}\big[|\tilde{\Gamma}^ {(3)} _{i_{1},.., i_{3}}( {\bf F}_{N, t}) |^2\big]
\\&=&
\bigg(\frac{2 ^ {2}}{(2N) ^ {\frac{3}{2}}} \bigg)^2 
\sum_{j_{1},j_2, j_{3} =0\atop
j'_1,j'_2,j'_3=0}^ {N-1}  
\bigg\{
\frac{  \langle A_{j_1}, A_{j'_1}\rangle _{\mathcal{H} _{t_{i_1}, t_{i_1}}}
 \langle A_{j_2}, A_{j'_2}\rangle _{\mathcal{H} _{t_{i_2}, t_{i_2}}}
  \langle A_{j_{2}}, A_{j_{3}}\rangle _{\mathcal{H} _{t_{i_{2}}, t_{i_{3}}}}   
}
 { \mathbf{E} \big[\vert u(t_{i_{1}}, A_{j_{1}})\vert ^ {2}\big]
 \mathbf{E} \big[\vert u(t_{i_{2}}, A_{j_{2}})\vert ^ {2}\big]
 \mathbf{E}\big[ \vert u(t_{i_{3}}, A_{j_{3}})\vert ^ {2}\big]}
 \\&&\qquad\qquad\qquad\qquad\times
 \frac{  
 \langle A_{j_{3}}, A_{j_{1}}\rangle _{\mathcal{H} _{t_{i_{3}}, t_{i_{1}}}}
  \langle A_{j'_{2}}, A_{j'_{3}}\rangle _{\mathcal{H} _{t_{i_{2}}, t_{i_{3}}}}   
\langle A_{j'_{3}}, A_{j'_{1}}\rangle _{\mathcal{H} _{t_{i_{3}}, t_{i_{1}}}}}
 { \mathbf{E} \big[\vert u(t_{i_{1}}, A_{j'_{1}})\vert ^ {2}\big]
 \mathbf{E} \big[\vert u(t_{i_{2}}, A_{j'_{2}})\vert ^ {2}\big]
 \mathbf{E}\big[ \vert u(t_{i_{3}}, A_{j'_{3}})\vert ^ {2}\big]}
 \bigg\}
 \\&&+
\bigg(\frac{2 ^ {2}}{(2N) ^ {\frac{3}{2}}} \bigg)^2 
\sum_{j_{1},j_2, j_{3} =0\atop
j'_1,j'_2,j'_3=0}^ {N-1}  
\bigg\{
\frac{  \langle A_{j_1}, A_{j'_2}\rangle _{\mathcal{H} _{t_{i_1}, t_{i_2}}}
 \langle A_{j_2}, A_{j'_1}\rangle _{\mathcal{H} _{t_{i_2}, t_{i_1}}}
  \langle A_{j_{2}}, A_{j_{3}}\rangle _{\mathcal{H} _{t_{i_{2}}, t_{i_{3}}}}   
}
 { \mathbf{E} \big[\vert u(t_{i_{1}}, A_{j_{1}})\vert ^ {2}\big]
 \mathbf{E} \big[\vert u(t_{i_{2}}, A_{j_{2}})\vert ^ {2}\big]
 \mathbf{E}\big[ \vert u(t_{i_{3}}, A_{j_{3}})\vert ^ {2}\big]}
 \\&&\qquad\qquad\qquad\qquad\times
 \frac{  
 \langle A_{j_{3}}, A_{j_{1}}\rangle _{\mathcal{H} _{t_{i_{3}}, t_{i_{1}}}}
  \langle A_{j'_{2}}, A_{j'_{3}}\rangle _{\mathcal{H} _{t_{i_{2}}, t_{i_{3}}}}   
\langle A_{j'_{3}}, A_{j'_{1}}\rangle _{\mathcal{H} _{t_{i_{3}}, t_{i_{1}}}}}
 { \mathbf{E} \big[\vert u(t_{i_{1}}, A_{j'_{1}})\vert ^ {2}\big]
 \mathbf{E} \big[\vert u(t_{i_{2}}, A_{j'_{2}})\vert ^ {2}\big]
 \mathbf{E}\big[ \vert u(t_{i_{3}}, A_{j'_{3}})\vert ^ {2}\big]}
 \bigg\}
\eeas
and hence 
\beas&&
{\colorg \mathbf{E}}\big[|\tilde{\Gamma}^ {(3)} _{i_{1},.., i_{3}}( {\bf F}_{N, t}) |^2\big]
\\&\simleq&
\sum{}^*
N^3\sum_{j_1,...,j_6=0}^{N-1}  
\bigg\{
\big|\langle A_{j_1}, A_{j_2}\rangle _{\mathcal{H} _{t_{i'_1}, t_{i'_2}}}\big|
\big|\langle A_{j_2}, A_{j_3}\rangle _{\mathcal{H} _{t_{i'_2}, t_{i'_3}}}\big|
\big| \langle A_{j_3}, A_{j_4}\rangle _{\mathcal{H} _{t_{i'_3}, t_{i'_4}}}\big|
\\&&\qquad\qquad\qquad\qquad\times 
\big|\langle A_{j_4}, A_{j_5}\rangle _{\mathcal{H} _{t_{i'_4}, t_{i'_5}}}\big|
\big|\langle A_{j_5}, A_{j_6}\rangle _{\mathcal{H} _{t_{i'_5}, t_{i'_6}}}\big| 
\big|\langle A_{j_6}, A_{j_1}\rangle _{\mathcal{H} _{t_{i'_6}, t_{i'_1}}}\big|
\bigg\}
\eeas
where $\sum^*$ is the sum for all permutations 
$(i'_1,...,i'_6)$ of $(i_1,i_1,i_1,i_1,i_2,i_2,i_2,i_2,i_3,i_3,i_3,i_3)$. 
By Lemma \ref{ll2} (a), (c), (d), we conclude
\begin{en-text}
\beas 
\sum_{i,j=0,i\not=j}^{N-1}
\big|\langle A_i, A_j\rangle _{\mathcal{H}_{t_a, t_b}}\big|
&=&
O(1)
\eeas
\beas 
\sum_{i,j=0,i=j}^{N-1}
\big|\langle A_i, A_j\rangle _{\mathcal{H}_{t_a, t_b}}\big|
&=&
O(1)
\eeas
\end{en-text}
that there exists a constant $K$ such that 
\beas 
\sup_{i=0,...,N-1}\sum_{j=0}^{N-1}
\big|\langle A_i, A_j\rangle _{\mathcal{H}_{t_a, t_b}}\big|
&\leq&
KN^{-1}
\eeas
for any $a,b\in\{1,2\}$ and $n\in\bbN$. 
{\cred By the Schwarz inequality and Lemma \ref{ll2} (c), we have 
\beas 
\big|\langle A_{j_1}, A_{j_2}\rangle _{\mathcal{H} _{t_{i_1}, t_{i_2}}}\big|
&\leq&
\frac{\max\{t_1,t_2\}}{2N}
\quad(j_1,j_2=0,...,N-1;\>i_1,i_2=1,2). 
\eeas
}
\begin{en-text}
\beas
\bbI(j_1)
&:=&
\max_{j'_1}
\sum_{j_2,...,j_6=0}^{N-1}  
\bigg\{
\big|\langle A_{j'_1}, A_{j_2}\rangle _{\mathcal{H} _{t_{i'_1}, t_{i'_2}}}\big|
\big|\langle A_{j_2}, A_{j_3}\rangle _{\mathcal{H} _{t_{i'_2}, t_{i'_3}}}\big|
\big| \langle A_{j_3}, A_{j_4}\rangle _{\mathcal{H} _{t_{i'_3}, t_{i'_4}}}\big|
\\&&\qquad\qquad\qquad\times 
\big|\langle A_{j_4}, A_{j_5}\rangle _{\mathcal{H} _{t_{i'_4}, t_{i'_5}}}\big|
\big|\langle A_{j_5}, A_{j_6}\rangle _{\mathcal{H} _{t_{i'_5}, t_{i'_6}}}\big| 
\big|\langle A_{j_6}, A_{j_1}\rangle _{\mathcal{H} _{t_{i'_6}, t_{i'_1}}}\big|
\bigg\}
\\&\leq&
\max_{j'_1}
\sum_{j_2=0}^{N-1}  
\bigg\{
\big|\langle A_{j'_1}, A_{j_2}\rangle _{\mathcal{H} _{t_{i'_1}, t_{i'_2}}}\big|
\max_{j'_2}
\sum_{j_3,...,j_6=0}^{N-1}  
\big|\langle A_{j'_2}, A_{j_3}\rangle _{\mathcal{H} _{t_{i'_2}, t_{i'_3}}}\big|
\big| \langle A_{j_3}, A_{j_4}\rangle _{\mathcal{H} _{t_{i'_3}, t_{i'_4}}}\big|
\\&&\qquad\qquad\qquad\times 
\big|\langle A_{j_4}, A_{j_5}\rangle _{\mathcal{H} _{t_{i'_4}, t_{i'_5}}}\big|
\big|\langle A_{j_5}, A_{j_6}\rangle _{\mathcal{H} _{t_{i'_5}, t_{i'_6}}}\big| 
\big|\langle A_{j_6}, A_{j_1}\rangle _{\mathcal{H} _{t_{i'_6}, t_{i'_1}}}\big|
\bigg\}
\\&\leq&
KN^{-1}
\>\bigg\{
\max_{j'_2}
\sum_{j_3,...,j_6=0}^{N-1}  
\big|\langle A_{j'_2}, A_{j_3}\rangle _{\mathcal{H} _{t_{i'_2}, t_{i'_3}}}\big|
\big| \langle A_{j_3}, A_{j_4}\rangle _{\mathcal{H} _{t_{i'_3}, t_{i'_4}}}\big|
\\&&\qquad\qquad\qquad\times 
\big|\langle A_{j_4}, A_{j_5}\rangle _{\mathcal{H} _{t_{i'_4}, t_{i'_5}}}\big|
\big|\langle A_{j_5}, A_{j_6}\rangle _{\mathcal{H} _{t_{i'_5}, t_{i'_6}}}\big| 
\big|\langle A_{j_6}, A_{j_1}\rangle _{\mathcal{H} _{t_{i'_6}, t_{i'_1}}}\big|
\bigg\}. 
\eeas
By induction, 
\beas 
\bbI(j_1) \yleq 
K^{\cred5}N^{-5}
\>\bigg\{
\max_{j'_6}
\big|\langle A_{j'_6}, A_{j_1}\rangle _{\mathcal{H} _{t_{i'_6}, t_{i'_1}}}\big|
\bigg\}
&\leq&
\frac{K^{\cred5}\max\{t_1,t_2\}}{2N^6}
\quad(j_1=0,...,N-1)
\eeas
where the last inequality is 
by the Schwarz inequality and Lemma \ref{ll2} (c). 
\end{en-text}
{\cred Then, by Lemma \ref{201909130444} below with $\Lambda=\{1,2\}^2$,}
we obtain 
\beas 
\big\|\tilde{\Gamma}^ {(3)} _{i_{1},.., i_{3}}( {\bf F}_{N, t}) \big\|_2
&=&
O(N^{-1}),
\eeas
in particular, $ [A2](ii)$. 
Since $\tilde{\Gamma}^ {(3)} _{i_{1},.., i_{3}}( {\bf F}_{N, t})$ is in the second chaos, 
we obtain $ [A2] (i)$ for any $\ell_1\in\bbN$. 

Verification of $ [A1]$ can be done in a similar way. 
The components of  ${\bf F}_{N, {\colorg t}}$ and $\Gamma^{(2)}({\bf F}_{N,{\colorg t}})-C$ are 
in the second chaos. 
First we can show 
$\sup_{N\in\bbN}\|{\bf F}_{N, {\colorg t}}\|_2<\infty$ and 
$\|\Gamma^{(2)}({\bf F}_{N, {\colorg t}})-C\|_2=O(N^{-1/2})$, 
and next use hypercontractivity to 
obtain $L^r$-estimates for any $r>2$. 
Condition [A1] is verified if we follow the same procedure after applying the Malliavin operator $\lceil(\ell+1)/2\rceil$-times to these variables. 

In conclusion, the 
asymptotic expansion for the multi-variate ${\bf F}_{N, {\colorg t}}$ 
is valid for $p=2$ and $\gamma=1/2$ as $N\to\infty$ when $|t_1-t_2|\geq1$, and so
along a subsequence satisfying (\ref{201908220853}) when $|t_1-t_2|<1$. {\colorg With more tedious computation, our approach can be extended to any $p\geq 2$.}
}

{\cred In the above discussion, we used the following lemma. 
\begin{lemma}\label{201909130444}
Let $\Lambda$ and $\bbJ$ be finite sets. 
Let $a^{\lambda}(j_1,j_2)\in\bbR$ 
for $\lambda\in\Lambda$, $j_1,j_2\in\bbJ$. 
Let $\Delta\in\bbR$. 
Suppose that there exists a constant $K$ such that 
\beas 
\max_{\lambda\in\Lambda}\max_{j_1\in\bbJ}\sum_{j_2\in\bbJ}
\big|a^{\lambda}(j_1,j_2)\big|
&\leq& K\Delta.
\eeas
Let $k\in\bbN$ satisfying $k\geq2$. 
Let 
\beas 
S(j_1;\lambda_1,...,\lambda_k)
&=&
\sum_{j_2,...,j_k\in\bbJ_N}a^{\lambda_1}(j_1,j_2)a^{\lambda_2}(j_2,j_3)\cdots
a^{\lambda_{k-1}}(j_{k-1},j_k)a^{\lambda_k}(j_k,j_1).
\eeas
Let 
\beas 
T(\lambda_1,...,\lambda_k)
&=&
\sum_{j_1,j_2,...,j_k\in\bbJ_N}a^{\lambda_1}(j_1,j_2)a^{\lambda_2}(j_2,j_3)\cdots
a^{\lambda_{k-1}}(j_{k-1},j_k)a^{\lambda_k}(j_k,j_1).
\eeas
Let 
\beas 
\Delta'
&=&
\max_{\lambda\in\Lambda}\max_{j_1,j_2\in\bbJ}\big|a^{\lambda}(j_1,j_2)\big|. 
\eeas
Then 
\bd
\im[(a)] 
$\ds 
\max_{\lambda_1,...,\lambda_k\in\Lambda}
\max_{j_1\in\bbJ}\big|S(j_1;\lambda_1,...,\lambda_k)\big|
\yleq
K^{k-1}\Delta^{k-1}\Delta'
$. 
\im[(b)] 
$\ds \max_{\lambda_1,...,\lambda_k\in\Lambda}\big|T(\lambda_1,...,\lambda_k)\big|
\yleq 
K^{k-1}\Delta^{k-1}\Delta'\#\bbJ
$. 
\ed
\end{lemma}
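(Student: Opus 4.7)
The plan is to prove (a) by the standard ``peeling'' argument for telescoping sums, then derive (b) immediately from (a) by summing the $j_1$-variable over $\bbJ$. The key observation is that the only factor in $S(j_1;\lambda_1,\ldots,\lambda_k)$ that closes the chain back to $j_1$ is $a^{\lambda_k}(j_k,j_1)$; every other factor participates in a sum of the form $\sum_{j_{i+1}}|a^{\lambda_i}(j_i,j_{i+1})|$, which is bounded by $K\Delta$ uniformly in $j_i$ and $\lambda_i$ by hypothesis.

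\noindent
For (a), I would first estimate
\beas
\big|S(j_1;\lambda_1,\ldots,\lambda_k)\big|
&\leq&
\sum_{j_2,\ldots,j_k\in\bbJ}
\big|a^{\lambda_1}(j_1,j_2)\big|\cdots\big|a^{\lambda_{k-1}}(j_{k-1},j_k)\big|\big|a^{\lambda_k}(j_k,j_1)\big|,
\eeas
and then pull out the closing factor using $|a^{\lambda_k}(j_k,j_1)|\leq\Delta'$:
\beas
\big|S(j_1;\lambda_1,\ldots,\lambda_k)\big|
&\leq&
\Delta'\sum_{j_2,\ldots,j_k\in\bbJ}
\big|a^{\lambda_1}(j_1,j_2)\big|\big|a^{\lambda_2}(j_2,j_3)\big|\cdots\big|a^{\lambda_{k-1}}(j_{k-1},j_k)\big|.
\eeas
Next I would successively perform the summations in the order $j_k$, $j_{k-1}$, \ldots, $j_2$. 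Each time, using the row-sum hypothesis
\beas
\sum_{j_{i+1}\in\bbJ}\big|a^{\lambda_i}(j_i,j_{i+1})\big|
&\leq&
K\Delta
\qquad(\text{uniformly in }\lambda_i,j_i),
\eeas
so each of the $k-1$ inner summations contributes a factor of $K\Delta$. This yields
\beas
\big|S(j_1;\lambda_1,\ldots,\lambda_k)\big|
&\leq&
(K\Delta)^{k-1}\Delta'\yeq K^{k-1}\Delta^{k-1}\Delta',
\eeas
uniformly in $j_1$ and $(\lambda_1,\ldots,\lambda_k)$, which is precisely (a).

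\noindent
For (b), since $T(\lambda_1,\ldots,\lambda_k)=\sum_{j_1\in\bbJ}S(j_1;\lambda_1,\ldots,\lambda_k)$, the bound in (a) immediately gives
\beas
\big|T(\lambda_1,\ldots,\lambda_k)\big|
&\leq&
\sum_{j_1\in\bbJ}\big|S(j_1;\lambda_1,\ldots,\lambda_k)\big|
\>\leq\>
K^{k-1}\Delta^{k-1}\Delta'\,\#\bbJ.
\eeas
There is no real obstacle here: the argument is entirely elementary, the only subtle choice being which factor to bound by $\Delta'$ (the ``closing'' one that links $j_k$ back to $j_1$) so that the remaining product forms a cleanly telescoping chain amenable to the row-sum bound.
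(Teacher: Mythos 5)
Your proof is correct and follows essentially the same strategy as the paper's: bound the closing factor $a^{\lambda_k}(j_k,j_1)$ by $\Delta'$ and absorb the remaining $k-1$ factors one at a time via the uniform row-sum bound $K\Delta$, then obtain (b) by summing (a) over $j_1\in\bbJ$. The only cosmetic difference is that the paper peels the chain from the front with an inductively defined decoupled maximum $\bbS^{(p-1)}(j_1)$, whereas you peel from the back by iterated summation; the estimates used are identical.
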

\proof We may assume that $a^{\lambda}(j_1,j_2)\geq0$. 
The property (b) follows from (a). We will show (a). 
Let 
\beas 
\bbS^{(p-1)}(j_1)
&=& 
\max_{\lambda_1,...,\lambda_p\in\Lambda}
\max_{j_1'\in\bbJ}
\sum_{j_2,...,j_p\in\bbJ}a^{\lambda_1}(j_1',j_2)a^{\lambda_2}(j_2,j_3)\cdots
a^{\lambda_{p-1}}(j_{p-1},j_p)a^{\lambda_p}(j_p,j_1)
\eeas
for $p=2,3,...$. 
Then 
\beas &&
\bbS^{(p-1)}(j_1)
\\&=&
\max_{\lambda_1,...,\lambda_p\in\Lambda}
\max_{j_1'\in\bbJ}\sum_{j_2\in\bbJ}a^{\lambda_1}(j_1',j_2)
\sum_{j_3,...,j_p\in\bbJ}a^{\lambda_2}(j_2,j_3)\cdots
a^{\lambda_{p-1}}(j_{p-1},j_p)a^{\lambda_p}(j_p,j_1)
\\&\leq& 
\max_{\lambda_1\in\Lambda}
\max_{j_1'\in\bbJ}\sum_{j_2\in\bbJ}a^{\lambda_1}(j_1',j_2)
\\&&\times
\max_{\lambda_2,...,\lambda_p\in\Lambda}
\max_{j_2'\in\bbJ}\sum_{j_3,...,j_k\in\bbJ}a^{\lambda_2}(j_2',j_3)\cdots
a^{\lambda_{p-1}}(j_{p-1},j_p)a^{\lambda_p}(j_p,j_1), 
\eeas
therefore 
\bea\label{201909130619} 
\bbS^{(p-1)}_N(j_1)
&\leq& 
K\Delta\bbS^{(p-2)}(j_1)
\eea
for all $j_1\in\bbJ_N$, $N\in\bbN$ and $p\in\{3,4,...\}$. 
By inductively applying (\ref{201909130619}), we obtain 
\bea\label{201909130637} 
\bbS^{(k-1)}(j_1)
&\leq& 
(K\Delta)^{k-2}\bbS^{(1)}(j_1). 
\eea
Moreover, 
\bea\label{201909130638} 
\bbS^{(1)}(j_1)
\nn&=& 
\max_{\lambda_1,\lambda_2\in\Lambda}
\max_{j_1'\in\bbJ}
\sum_{j_2\in\bbJ_N}a^{\lambda_1}(j_1',j_2)a^{\lambda_2}(j_2,j_1)
\nn\\&\leq&
\max_{\lambda_1,\lambda_2\in\Lambda}
\max_{j_1'\in\bbJ}
\sum_{j_2\in\bbJ}a^{\lambda_1}(j_1',j_2)
\max_{j_2'\in\bbJ}a^{\lambda_2}(j_2',j_1)
\nn\\&\leq&
K\Delta
\max_{\lambda_2\in\Lambda}\max_{j_2'\in\bbJ}a^{\lambda_2}(j_2',j_1)
\nn\\&\leq&
K\Delta\Delta'. 
\eea
From (\ref{201909130637}) and (\ref{201909130638}), we conclude 
\beas 
\max_{j_1\in\bbJ}\bbS^{(k-1)}(j_1)
&\leq& 
(K\Delta)^{k-1}\Delta'.
\eeas
Since $S(j_1;i_1,...,i_k)\leq \bbS^{(k-1)}(j_1)$ by definition, 
we obtain the result. 
\qed\halflineskip
}

{\colorg \section{ Elements from Malliavin calculus}\label{app}}
{
In this section, we recall the basics of the Mallaivin calculus.  For complete presentations, we refer to \cite{N} or \cite{NPbook}. Consider $H$ a real separable Hilbert space and $(W(h), h \in H)$ an isonormal Gaussian process on a probability space $(\Omega, {\cal{A}}, P)$, which is a centered Gaussian family of random variables such that ${\bf E}\left[ W(\varphi) W(\psi) \right]  = \langle\varphi, \psi\rangle_{H}$. 

 We denote by $D$  the Malliavin  derivative operator that acts on smooth functions $\mathcal{S}$ of the form $F=g(W(h_1), \ldots , W(h_n))$ ($g$ is a smooth function with compact support and $h_i \in H$)
\begin{equation*}
DF=\sum_{i=1}^{n}\frac{\partial g}{\partial x_{i}}(W(h_1), \ldots , W(h_n)) h_{i}.
\end{equation*}
By iteration, we can also define  $D ^{k}F$, the $k$th iterated Malliavin derivative. Let $\mathbb{D} _ {k, p}$ (for any natural number $k$ and for any real number $p\geq 1$) be the closure of $\mathcal{S}$ with respect to the norm
\begin{equation*}
\Vert F\Vert _{k,p} ^{p}:= E [\vert F\vert ^{p} ] + \sum_{i=1}^{k} E \left[ \Vert D ^{i} F \Vert ^{p} _{H ^{\otimes i}}\right].
\end{equation*}
The adjoint of $D$ is denoted by $\delta $ and is called the divergence (or
Skorohod) integral. Its domain ($Dom(\delta)$) coincides with the class of stochastic processes $u\in L^{2}(\Omega \times T)$ such that
\begin{equation*}
\left| \mathbf{E}\big[\langle DF, u\rangle \big]\right| \leq c\Vert F\Vert _{2}
\end{equation*}
for all $F\in \mathbb{D}_ {1,2}$ and $\delta (u)$ is the element of $L^{2}(\Omega)$ characterized by the duality relationship
\begin{equation}\label{dua}
\mathbf{E}\big[(F\delta (u))\big]= \mathbf{E}\big[\langle DF, u\rangle _{H}\big].
\end{equation}

The chain rule for the Malliavin derivative (see Proposition 1.2.4 in \cite{N}) will be used several times. If $\varphi: \mathbb{R}\to \mathbb{R}$ is a continuously differentiable function 
{ having bounded derivative} 
and $F\in \mathbb{D} _ {1,2}$, then  $\varphi (F) \in \mathbb{D} _ {1,2}$ and
\begin{equation}
\label{chain}
D\varphi(F)= \varphi ' (F) DF.
\end{equation}

Denote by  $I_{n}$ the multiple stochastic integral with respect to
$B$ (see \cite{N}). This mapping $I_{n}$ is actually an isometry between the Hilbert space $H^{\odot n}$(symmetric tensor product) equipped with the scaled norm $\frac{1}{\sqrt{n!}}\Vert\cdot\Vert_{H^{\otimes n}}$ and the Wiener chaos of order $n$ which is defined as the closed linear span of the random variables $h_{n}(W(h))$ where $h \in H, \|h\|_{H}=1$ and $h_{n}$ is the Hermite polynomial of degree $n \in {\mathbb N}$
\begin{equation*}
h_{n}(x)=\frac{(-1)^{n}}{n!} \exp \left( \frac{x^{2}}{2} \right)
\frac{d^{n}}{dx^{n}}\left( \exp \left( -\frac{x^{2}}{2}\right)
\right), \hskip0.5cm x\in \mathbb{R}.
\end{equation*}
The isometry of multiple integrals can be written as follows: for $m,n$ positive integers,
\begin{eqnarray}
\mathbf{E}\left[I_{n}(f) I_{m}(g) \right] &=& n! \langle \tilde{f},\tilde{g}\rangle _{H^{\otimes n}}\quad \mbox{if } m=n,\nonumber \\
\mathbf{E}\left[I_{n}(f) I_{m}(g) \right] &= & 0\quad \mbox{if } m\not=n.\label{iso}
\end{eqnarray}
It also holds that
$I_{n}(f) = I_{n}\big( \tilde{f}\big)$ where $\tilde{f} $ denotes the symmetrization of $f$.

We recall that any square integrable random variable which is measurable with respect to the $\sigma$-algebra generated by $W$ can be expanded into an orthogonal sum of multiple stochastic integrals
\begin{equation}
\label{sum1} F=\sum_{n=0}^\infty I_{n}(f_{n})
\end{equation}
where $f_{n}\in H^{\odot n}$ are (uniquely determined)
symmetric functions and $I_{0}(f_{0})=\mathbf{E}\left[  F\right]$.
\\\\
Let $L$ be the Ornstein-Uhlenbeck operator
\begin{equation*}
LF=-\sum_{n\geq 0} nI_{n}(f_{n})
\end{equation*}
if $F$ is given by (\ref{sum1}) and it is such that $\sum_{n=1}^{\infty} n^{2}n! \Vert f_{n} \Vert ^{2} _{{\cal{H}}^{\otimes n}}<\infty$. 
Notice that
\begin{equation*}
LF= L(F-EF) \mbox{ and }L ^ {-1} F= L ^ {-1} (F-EF).
\end{equation*}
It holds that
\begin{equation}\label{aaa}
\delta D(-L) ^ {-1} F= F-EF.
\end{equation}

We  recall  the product formula for multiple integrals.
It is well-known that for $f\in H^{\odot n}$ and $g\in H^{\odot m}$
\begin{equation}\label{prod}
I_n(f)I_m(g)= \sum _{r=0}^{n\wedge m} r! \left( \begin{array}{c} n\\r\end{array}\right) \left( \begin{array}{c} m\\r\end{array}\right) I_{m+n-2r}(f\otimes _r g)
\end{equation}
where $f\otimes _r g$ means the $r$-contraction of $f$ and $g$.

Another important  property of  finite sums of multiple integrals is the hypercontractivity. Namely, if $F= \sum_{k=0} ^{n} I_{k}(f_{k}) $ with $f_{k}\in H ^{\otimes k}$ then
\begin{equation}
\label{hyper}
\mathbf{E}\big[\vert F \vert ^{p}\big] \leq C_{p} \left( \mathbf{E}\big[F ^{2} \big]\right) ^{\frac{p}{2}}.
\end{equation}
for every $p\geq 2$.

{\colorg
We can also define associated Sobolev spaces and Malliavin derivatives for vector-valued random variables. Let $V$ be an Hilbert space. 
Let $\mathcal{S}_{V}$ denote the set\
\[\mathcal{S}_{V}=\left\{\sum_{i=1}^n F_i h_i\Big| F_1,\ldots ,F_n\in\mathcal{S},\,h_1,\ldots , h_n\in V,\,n\geq 1 \right\}.\]
Then for $u=\sum_{i=1}^n F_i h_i$ we can define $Du := \sum_{i=1}^n DF_i \otimes h_i$ and consider the norm
\[\|u\|_{k,\,p,\,V}=\left(\E[\|u\|^2_{V}]+\sum_{i=1} ^{k}\E[\|D^{k} u\|^p_{H^{\otimes i}\otimes V}]\right)^{\frac{1}{p}}.\]
Now we can, just as for the space $\mathcal{S}$, consider the closure of $\mathcal{S}_{V}$ with respect to this norm and call it $\mathbb{D}_{k,\,p}(V)$.\\}

\end{document}